\theoremstyle{plain}
\newtheorem{theorem}{Theorem}[section]
\newtheorem{prop}[theorem]{Proposition}
\newtheorem{lem}[theorem]{Lemma} 
\newtheorem{coro}[theorem]{Corollary}
\newtheorem{claim}[theorem]{Claim}
\theoremstyle{definition}
\newtheorem{definition}[theorem]{Definition}
\newtheorem{example}[theorem]{Example}
\newtheorem{remark}[theorem]{Remark}
\newtheorem{question}[theorem]{Question}
\newcommand{\ii}{{\mathrm{i}}}
\newcommand{\dd}{{\mathrm{d}}}
\newcommand{\ee}{{\mathrm{e}}}
\newcommand{\one}{{\mathbbm{1}}}
\newcommand{\ZZ}{{\mathbb{Z}}}
\newcommand{\mA}{{\mathcal{A}}}
\newcommand{\mP}{{\mathcal{P}}}
\newcommand{\exend}{\hfill $\Diamond$}
\def\RO{\textbf{(R1)}}
\def\RT{\textbf{(R2)}}
\def\R{\mathbb{R}}
\def\N{\mathbb{N}}
\def\Z{\mathbb{Z}}
\def\T{\mathcal{T}}
\newcommand{\sub}{{\mathcal{S}}}
\newcommand{\digits}{{\mathcal D}}
\def\vecj{\vec{j}}
\def\Gdual{\widehat{G}}
\newcommand{\subs}[1]{\mathcal{S}^{#1}}
\newcommand{\digit}[1]{\mathcal{D}^{(#1)}}
\def\alet{\mathsf{a}}
\def\blet{\mathsf{b}}
\def\dlet{\mathsf{d}}
\def\tiling{\mathcal{T}}
\def\seq{\tau}
\def\vecd{{\vec{d}}}
\def\seqsp{\Sigma}
\def\ssp{{{\Sigma_\sq}}}
\def\sat{{\Sigma_\fractile}}
\def\dig{\pi^{ }_{\digits}}
\def\spin{\pi^{ }_{G}}
\def\spinfac{\pi_{G/\text{ker}(\chi)}}
\def\sq{{\mathfrak{u}}}
\def\fractile{{\mathfrak{t}}}
\def\stri{\sub_{\textnormal{tri}}}
\def\odom{\mathcal{O}}
\newcommand{\addr}[1]{{\mathbf{#1}}}
\newcommand{\odigit}[1]{\vec{#1}}
\def\haar{\mu^{ }_{\textnormal{H}}}
\def\leb{\mu^{ }_{\textnormal{L}}}
\begin{document}

\title{Spectral theory of spin substitutions}

 \author{Natalie Priebe Frank}
 \address{Department of Mathematics and Statistics, Vassar College, \newline \hspace*{\parindent}Box 248, Poughkeepsie, NY  12604, USA}
 \email{nafrank@vassar.edu}

 \author{Neil Ma\~nibo}
\address{Fakult\"at f\"ur Mathematik, Universit\"at Bielefeld, \newline
\hspace*{\parindent}Postfach 100131, 33501 Bielefeld, Germany}
\email{cmanibo@math.uni-bielefeld.de }

\begin{abstract}
We introduce qubit substitutions in $\Z^m$, which have non-rectangular domains based on an endomorphism $Q$ of $\Z^m$ and a set $\digits$ of coset representatives of $\Z^m/Q\Z^m$. 
We then focus on a specific family of qubit substitutions which we call spin substitutions, whose combinatorial definition requires a finite abelian group $G$ as its spin group. 
We investigate the spectral theory of the underlying subshift $(\seqsp,\Z^m)$. Under certain assumptions, we show that it is measure-theoretically isomorphic to a group extension of an $m$-dimensional odometer, which induces a complete decomposition of the function space $L^{2}(\seqsp,\mu)$ . 
This enables one to use group characters in $\Gdual$ to derive substitutive factors and 
carry out  a spectral analysis on specific subspaces.
We provide general sufficient criteria for the existence of pure point, absolutely continuous and singular continuous spectral measures, together with some bounds on their spectral multiplicity. 
\end{abstract}

\keywords{dynamical spectrum, diffraction, substitution tilings, group extensions}

\subjclass[2010]{37B10, 37A30, 52C23, 42A16}

\date{\today}
\maketitle

\section{Introduction}

In this work, we define and develop the theory of \emph{qubit substitutions},  which are a type of multidimensional morphisms on a finite alphabet $\mA$ wherein the geometry is based on digit tilings; see \cite{Vince2,Vince}. 
Given an expanding endomorphism $Q$ of $\Z^m$ and a full set $\digits$ (our {\em digit set}) of coset representatives  of $\Z^m/Q\Z^m$  we can partition $\Z^m$ into subsets $Q \vecj + \digits$. There are already interesting questions on `tiling the integers' (see numerous references from \cite{CM,GT}), but our construction is a type of discrete finite automaton instead. 
The choice of a digit set $\digits$ completely determines a sequence $\left\{\digits^{(n)}\right\}$ of coset representatives for $\Z^m/Q^{n}\Z^m$ for all $n\in \N$, which is suitable in building the support of the level-$n$ supertiles of a qubit substitution; see \cite{Cabezas} for a recent account which focuses on algebraic invariants. This generalizes the hierarchical structure present in constant-length substitutions in one dimension and block substitutions in higher dimensions, compare \cite{Queffelec2, Frank3, Bartlett, BG2}. 

Once the issues regarding the underlying geometry are settled, it becomes clear that one can use the same techniques in constructing analogous variants in the qubit setting (e.g. substitutions with coincidences, bijective substitutions, etc.). For the rest of the paper, we then focus on a method for defining {\em qubit spin substitutions}, which is inspired by the construction of the Rudin--Shapiro sequence as an iterated morphism. 
These substitutions are special types of qubit substitutions having additional invariance properties from the defining spin group $G$.
The resulting subshifts allow a relatively complete  analysis that generalizes known families algebraically, geometrically and spectrally; see \cite{Queffelec1,AL-2,Frank1,AL,CGS}. The construction works in any dimension, allows the existence of disconnected supertiles and holds for any choice of (finite) abelian group. Under additional assumptions, a subshift with the same spectral features can be defined even when the spin group is no longer finite, but is still compact, yielding new and interesting phenomena.

We will provide details and motivating examples, but here is the basic idea. In addition to $Q$ and $\digits$, we take a finite abelian group $G$ (our {\em spin group})  and a $|\digits| \times |\digits|$ matrix $W$ over $G$. The matrix $W$, in conjunction with the decomposition of $\vecj$ base $(Q,\digits)$, determines how to allocate a spin to $\vecj$. The resulting rule then maps an element of the alphabet $\mA=G\times\digits$ to a finite word over $\mA$ supported on $\digits$, which can be unambiguously iterated under some assumptions on the digit set $\digits$ and can be used to define a subshift $(\seqsp,\Z^m)$. This subshift is our main object of interest.

The paper is organized as follows. In Section~\ref{sec:qubitandspin}, we recall notions and classical results regarding digit tilings on $\Z^m$ and establish how to build a qubit substitution subshift from it. 
We then discuss certain dynamical properties of the subshift which it inherits from an associated tiling dynamical system (with an $\R^m$-action). Here it is made clear that the connectedness of the supertiles is not a prerequisite to define the subshift. 

Section \ref{sec:dynamspec} begins with a brief survey of notions in spectral theory, followed by a discussion of the group action by $G$ on the subshift. Proposition~\ref{prop: L2 decomp} provides the decomposition of $L^{2}(X,\mu)$ into subspaces $H^{\chi}$ which are in one-to-one correspondence with the characters $\chi\in \Gdual$, and Theorem~\ref{thm:main result} gives sufficient conditions for the presence of certain spectral types. We will write $\chi(W)$ for the matrix with entries $\chi(W_{ij})$. 
Under mild dynamical assumptions, Theorem~\ref{thm:main result} shows: 

\begin{quote}
\begin{itemize}
\item If $\chi$ is the trivial character, then the subspace $H^{\chi}$ is pure point. 
\item If $\frac{1}{|\digits|}\chi(W)$ is unitary, then $H^{\chi}$ is absolutely continuous. 
\item If $\chi(W)$ is rank-$1$, then $H^{\chi}$ is either pure point or purely singular continuous.
\end{itemize}
\end{quote} 

Note that these spectral results are independent of the geometry and only rely on $W$. These three statements are proved in different sections where the appropriate arguments are developed separately. 
In Section~\ref{sec:odometer}, we investigate the underlying odometer, which is a pure point factor recoverable via the trivial character. More specifically, in Theorem~\ref{thm:skew product}, we prove that a spin substitution subshift is isomorphic to a group extension and give an explicit formula for the cocycle. 
Proposition~\ref{prop: L2 decomp} then follows from this as a corollary.

The splitting of the function space enables one to handle each subspace independently. Proposition~\ref{prop:purity-skew} states that every subspace is spectrally pure.
Section~\ref{sec:ac} contains
the proof of the second statement, which relies on computing the Fourier coefficients 
of certain spectral measures explicitly. The method of proof is constructive and follows that of \cite{Frank1}.

In Section~\ref{sec:diffraction}, we establish the connection between the diffraction spectrum and the spectral measures of specific functions that we know are in $H^{\chi}$. This allows us to use spectral results in diffraction theory to obtain dynamical conclusions. Proposition~\ref{prop: factors} describes the properties of substitutive factors, which always exist for spin substitutions, and how the spectral measures they admit restrict the spectral type of $H^{\chi}$. Proposition~\ref{prop:rank1} and the discussion following it finish the proof of the third claim on singularity. For this we need the construction of the factor substitution induced by a character $\chi\in \Gdual$ (with the extra assumption of $\chi(W)$ being rank-$1$). 

When $\chi$ does not satisfy any of the conditions above, 
but $\chi(W)$ is still of full rank, 
we turn to diffraction analysis in view of the renormalization approach via Lyapunov exponents; see Section~\ref{sec:lyapunov}. To be more precise, the corresponding matrix cocycle $B$ is unitarily block-diagonalizable, where each block  $B_{\chi}$ corresponds to a character $\chi$ and its subspace $H^{\chi}$.
Proposition~\ref{prop:Lyapunov-spin-singular} provides a numerically-verifiable singularity criterion for $H^{\chi}$.

Several examples with particular spectral properties are then presented in Section~\ref{sec:examples}.

\section{Qubit and spin substitutions}\label{sec:qubitandspin}

The underlying geometry of a qubit substitution is given by a periodic `digit' tiling of $\Z^m$ that is generated in a substitutive manner \cite{Vince}. Qubit substitutions use the lattice and inflation structure of the digit tiling, rather than the canonical rectangular regions, upon which to make a substitution rule. In many cases, this method can produce subshifts even when the digit set is highly non-canonical. 

The main results appearing in this paper are for a special class of qubit substitutions we call {\em spin substitutions}. In this section, we lay out the definitions of qubit substitution dynamical systems, define spin substitutions, and provide illustrative examples.

\subsection{Qubit substitutions}\label{sec:digitqubit}
A {\bf\em digit system} $(Q, \digits)$ (see \cite{Vince}) is an expansive endomorphism $Q$ of $\Z^m$ along with a complete set of coset representatives of $\mathbb{Z}^m/Q\mathbb{Z}^m$ called a {\bf \em digit set} \footnote{This is called a {\em standard digit set} in \cite{LW-2}.}
$\digits$. Since $\Z^m = Q  \Z^m + \digits$, $\digits$ (seen as a direct Minkowski sum) is a set that covers $\Z^m$ periodically and comes with a natural rescaling map $Q$. Those properties allow digit systems to support the underlying geometry of a substitution rule in $\Z^m$ or $\R^m$.
 
\begin{definition} Let $(Q, \digits)$ be a digit system and let $\mA$ be a finite set called the  {\bf \em alphabet}. 
A {\bf \em qubit substitution} is a map $\sub = \subs1: \mA \times \digits \to \mA$.
We write $\sub(\alet)$ to mean the word $\sub(\alet,\digits)$, and we call this word a {\bf \em \textnormal{1}-supertile}. The {\bf \em n-supertiles} are defined recursively as
\[
\subs{n+1}(\alet)=\bigcup_{\vecd\in\mathcal{D}} \subs{n}(\sub(\alet,\vecd~))+Q^{n}(\vecd~).
\]
For each $n \in \N$ we call the set  $\digit{n} = Q \digit{n-1} + \digits$ the {\bf \em domain} of the $n$-supertile, where $\digit{0}:= \{0\} \subset \Z^m$. 
\end{definition}

\begin{remark}
There are two equivalent ways to view an $(n+1)$-supertile: {\bf \em fusion} and {\bf \em substitution} (see \cite{Frank2}). The substitutive approach for computing $\subs{n+1}(\alet)$ is to apply $\sub$ to each element in $\subs{n}(\alet)$. This results in the (equivalent) formulation
\[
\subs{n+1}(\alet)=\bigcup_{\vecd\in\digit{n}} \sub(\subs{n}(\alet,\vecd~))+Q(\vecd~).
\]
The fusion approach seen in our definition is that $\subs{n+1}(\alet)$ is assembled from $n$-supertiles of the same types as those in $\sub(\alet)$ translated by $Q^n$ times their corresponding vectors.
This viewpoint is needed for the majority of our proofs. \exend
\end{remark}

\begin{example} \label{ex:gasket}We present an example of a qubit substitution on two letters. Let \[
Q=\begin{pmatrix}
2 & 0 \\
0 & 2
\end{pmatrix}, \quad \digits=\{(0,0), (1,0), (0,1), (-1,-1)\},\quad \text{and}  \quad \mA = \{\alet,\blet\}.\]
 Let $\sub(\alet)$ take the four digits to $\alet, \alet, \alet$ and $\blet$ respectively, pictured at the left in Figure ~\ref{fig:Gasket} with $\alet$ and $\blet$ represented as pink and blue squares with their lower left corners at digits. (Tilings are formalized in Section~\ref{subsec:tilings}). Let $\sub(\blet)$ take the digits to $\blet, \blet, \blet$, and $\alet$, the opposite of $\sub(\alet)$. (This type of substitution is called {\em bijective}). 
Figure~\ref{fig:Gasket} shows the first three iterations of an $\alet$ under this qubit substitution. (Note that a copy of $\sub(\blet)$ appears at the lower left of the 2-supertile pictured in the center). \exend

\begin{figure}[ht]
\centering
\raisebox{1.1in}{\includegraphics[width=.09\textwidth]{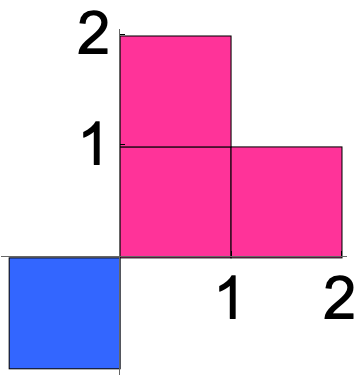}}
 \hskip 2 em
\raisebox{.7in}{\includegraphics[width=.2\textwidth]{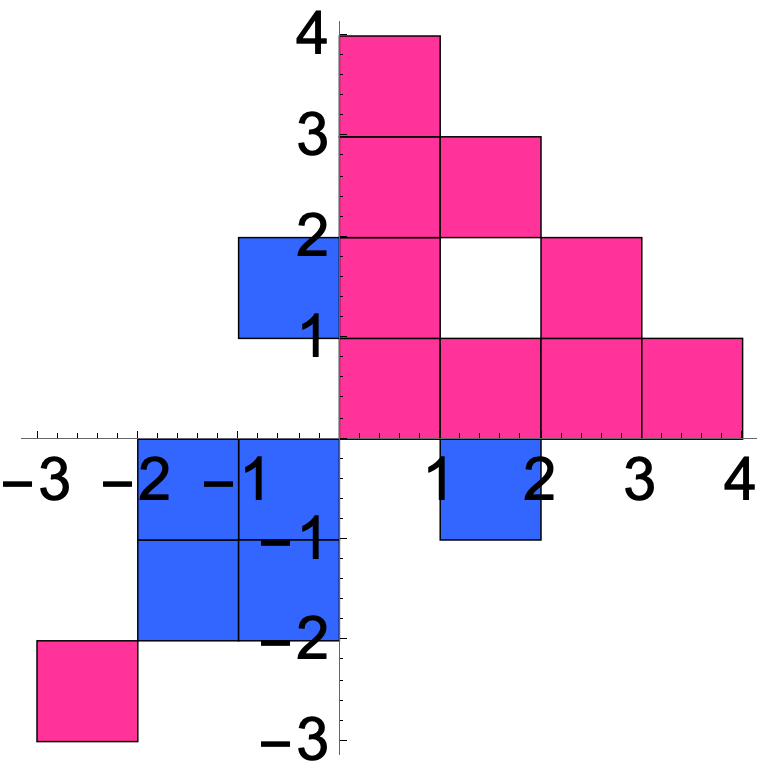}}
 \hskip 2 em
\includegraphics[width=.4\textwidth]{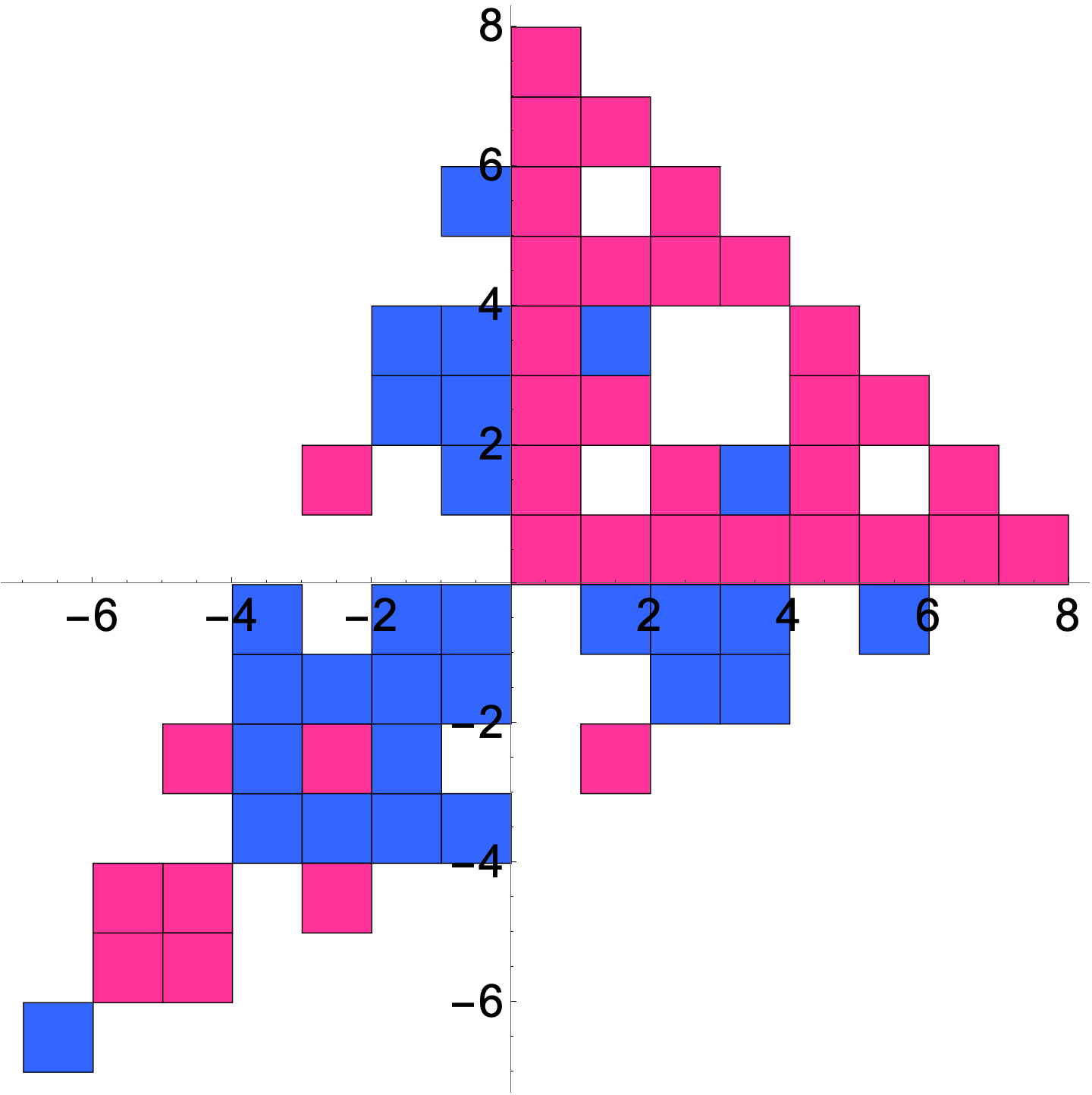}
\caption{The supertiles $\subs 1 (\alet), \subs 2  (\alet)$, and $\subs 3 (\alet)$ for Example~\ref{ex:gasket}.}\label{fig:Gasket}
\end{figure}

\end{example}

\subsection{Qubit substitution dynamical systems}

Although individual supertiles in a qubit substitution may not be connected, sometimes they contain arbitrarily large rectangular words. Whether or not a subshift exists depends entirely on whether or not they do. This in turn depends entirely on properties of its underlying digit system $(Q, \digits)$. The dynamics of the subshift depend both on the digit system and the supertile labels from $\mA$.

In the example just given, we see that any $n$-supertile contains several $2 \times 2$ words if $n \geqslant 2$. All of the iterates of these $2 \times 2$ words under the qubit substitution are therefore subwords of supertiles at some level as well. Figure \ref{fig:GasketBig} shows a later iteration of a word with domain $\{(0,0),(0,-1), (-1,-1), (-1,0)\}$.

\begin{figure}[ht]
\centering \includegraphics[width=.45\textwidth]{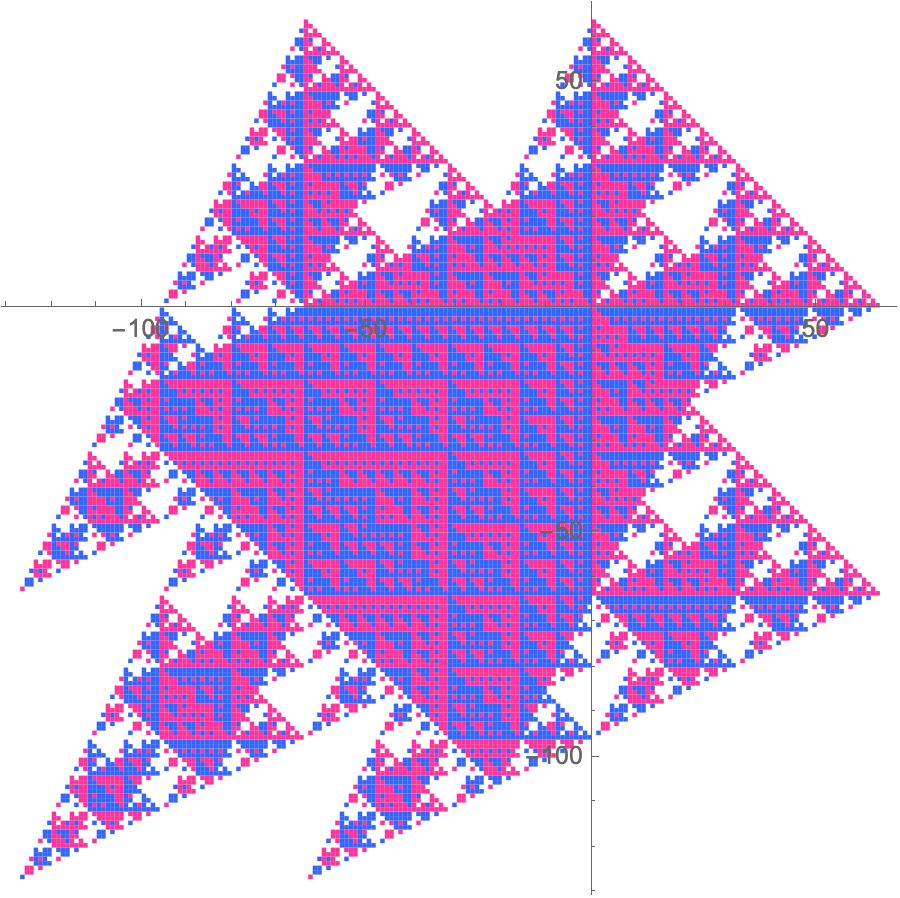}
\caption{The word $(0,0),(0,-1), (-1,-1), (-1,0) \mapsto \, \alet,\alet,\blet,\alet$ substituted six times. The presence of arbitrarily large rectangular words allows us to define a subshift of $\Z^2$ for $\sub$.}
\label{fig:GasketBig}
\end{figure}

This figure suggests that it is not necessary to have connectivity conditions on supertiles to define a subshift. Therefore, in line with standard practice, we use our supertiles as a sort of pre-language to make the following definition.  

\begin{definition}
Let $\sub$ be a qubit substitution in $\Z^m$ defined by $(Q, \digits, \mA)$, and let $R$ be a rectangular subset of $\Z^m$. A patch $P \in \mA^R$ is {\bf \em admitted by $\sub$} if there is an $N \in \N$ and an $\alet \in \mA$ such that a shift of $P$ appears in $\subs{N}(\alet)$.

An element $\seq \in \mA^{\Z^m}$ is {\bf \em admitted by $\sub$} if every rectangular subword of $\seq$ is admitted by $\sub$. If nonempty, the set
$\seqsp$ of admitted sequences in $\mA^{\Z^m}$ under the action of the shift is the {\bf \em substitution subshift $(\seqsp,\Z^m)$}.
\end{definition}

Sufficient conditions for a substitution subshift to be nonempty are not undertaken in this work but can be checked in examples.  Geometric and topological questions surrounding digit systems in general are quite subtle (see \cite{GroHaas,LW,LW-2,Vince2}), complicating our existence question.

To determine when qubit substitution dynamical systems are minimal, and to understand the nature of their invariant measures, it is necessary to delve into some of these topological questions. Our arguments will rely on existing results from tiling dynamical systems theory, so we review these briefly as well.

\subsubsection{Digit tiles and qubit tiling systems.}
\label{subsec:tilings}

We define a {\bf \em tile in $\R^m$} to be a pair of the form $(C, \alet)$, where the {\em support} $C$ is a compact subset of $\R^m$ that is the closure of its interior, and the {\em type} $\alet$ is in some finite alphabet $\mA$. Two tiles are {\em equivalent} if their  supports are translates of one another and they are the same type. A finite set $\mP$ of nonequivalent tiles on a given alphabet $\mA$ is called a {\bf \em prototile set}. Copies of prototiles are used to form tilings of $\R^m$. 

Suppose $\tiling$ is a set of tiles, all of which are equivalent to tiles in $\mP$. We call $\tiling$ a {\bf \em tiling} if the interiors of any two nonequivalent tiles are disjoint and the supports' union is $\R^m$. 
If $X$ is a set of tilings that is invariant under translation by $G=\Z^m$ or $\R^m$, and is closed under the local topology, we call $(X, G)$ a {\bf \em tiling dynamical system}.

Every digit system  $(Q, \digits)$ of the type we consider gives rise to an iterated function system with contraction maps $c_\vecd(x) = Q^{-1}(x + \vecd)$ for each $\vecd \in \digits$. The attractor is the {\bf \em digit tile}
\begin{equation}\label{def:digittile}
\fractile = \left\{\sum_{k = 1}^\infty Q^{-k} \vecd_k \, | \, \vecd_k \in \digits \right\} = \lim_{k \to \infty} Q^{-k}\digits^{(k)}.
\end{equation}
Its existence and properties are explored for broad classes of digit systems, and the results stated here are from \cite{GroHaas,LW,LW-2,Vince2} and summarized in the survey \cite{Vince}. 

The one-tile prototile set $\{\fractile\}$ can form tilings of $\R^m$ in two possibly nonequivalent ways. A `self-replicating tiling' arises from the fact that $Q\fractile = \bigcup_{\vec{d} \in \digits} \fractile + \vec{d}$. It can be shown \cite{Solomyak} that there is a tiling of $\R^m$ that is invariant under multiplication by $Q$ followed by subdivision.
 
The digit tile can also tile $\R^m$ using a lattice $L$ in the sense that 
\[ \left\{ \fractile + \vec{x}, \vec{x} \in L \right\}
\]
covers $\R^m$ and the tiles intersect only at boundaries. There are situations where $L$ is a proper sublattice of $\Z^m$
 and we need to avoid these situations.  The relevant properties needed in what follows are summarized next.

\begin{prop}(see \cite{GroHaas,LW,LW-2,Vince2})
Suppose $Q$ is an expanding endomorphism of $\Z^m$ and $\digits$ is a complete set of coset representatives of $\Z^m / Q\Z^m$. Then
\begin{itemize}
\item $\fractile$ is the closure of its interior,
\item the Lebesgue measure $\leb(\fractile)$ is a positive integer, and
\item $\leb(\fractile) = 1$ if and only if $\fractile + \Z^m$ forms a tiling of $\R^m$.  \qed
\end{itemize}
\end{prop}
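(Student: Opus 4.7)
The plan is to exploit the self-similar identity $Q\fractile = \fractile + \digits$, which holds because $\fractile$ is the attractor of the contractive IFS $\{c_\vecd : \vecd\in\digits\}$; iterating yields $Q^n\fractile = \fractile + \digit{n}$ for all $n$. I would first establish the covering identity $\R^m = \fractile + \Z^m$. Since $\digits$ is a full set of coset representatives of $\Z^m/Q\Z^m$, the set $A = \fractile + \Z^m$ is closed and satisfies $QA = Q\fractile + Q\Z^m = \fractile + \digits + Q\Z^m = A$; expansiveness of $Q$ together with the boundedness of $\fractile$ then forces $A = \R^m$ via a standard nested-iteration argument.

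Next, define the multiplicity function
\[
k(x) = \#\{\vec{n}\in\Z^m : x\in\fractile + \vec{n}\},
\]
which is finite everywhere (as $\fractile$ is bounded), nonnegative-integer valued, $\Z^m$-periodic, and satisfies $\int_{[0,1]^m} k\, d\leb = \leb(\fractile)$. The central step is to prove $k(Qx) = k(x)$ for a.e.\ $x$. Using the bijection $\Z^m \longleftrightarrow \Z^m \times \digits$ given by $\vec{n} \mapsto (\vec{n}',\vecd)$ with $\vec{n} = Q\vec{n}' + \vecd$, one rewrites
\[
k(Qx) = \sum_{\vec{n}'\in\Z^m}\sum_{\vecd\in\digits} \one_{Q^{-1}(\fractile + \vecd)}(x - \vec{n}').
\]
The equality $|\det Q|\,\leb(\fractile) = \leb(Q\fractile) = \leb\bigl(\bigcup_\vecd (\fractile + \vecd)\bigr)$ forces the sets $\fractile + \vecd$ to overlap only on a null set, so the inner sum collapses a.e.\ to $\one_\fractile(x-\vec{n}')$, yielding $k(Qx) = k(x)$. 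Since the endomorphism induced by $Q$ on the torus $\mathbb{T}^m = \R^m/\Z^m$ is ergodic for Haar measure (it is an expanding integer endomorphism), $k$ is a.e.\ constant, and integration gives $\leb(\fractile) \in \N$.

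For the remaining bullets, positivity of $\leb(\fractile)$ combined with the self-similar structure yields nonempty interior: if $U \subseteq \fractile$ is open, then so is $Q^{-n}(U + \digit{n}) \subseteq \fractile$, and such sets become dense in $\fractile$, proving $\fractile = \overline{\mathrm{int}(\fractile)}$. For the tiling criterion, if $\leb(\fractile) = 1$ then $k \equiv 1$ a.e., which combined with the regular-closed property forces distinct translates $\fractile + \vec{n}$ to meet only in boundary sets of Lebesgue measure zero, giving a tiling of $\R^m$; the converse is immediate. I expect the main obstacle to be justifying the measure-zero-overlap claim $\leb\bigl((\fractile+\vecd)\cap(\fractile+\vecd')\bigr)=0$ for $\vecd\neq\vecd'$ directly from the self-similar identity, since this is what unlocks both the $Q$-invariance of $k$ and the tiling conclusion.
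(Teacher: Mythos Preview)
The paper does not supply its own proof of this proposition: it is quoted from the literature (Gr\"ochenig--Haas, Lagarias--Wang, Vince) and closed with a \qed\ immediately after the statement. So there is nothing in the paper to compare your argument against line by line.

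That said, your sketch is essentially the standard route taken in those references, and it is sound in outline. A couple of remarks on where the real work lies. First, the step you flag as the ``main obstacle''---the null-overlap of the pieces $\fractile+\vecd$---is in fact the easy part: from $Q\fractile=\bigcup_{\vecd\in\digits}(\fractile+\vecd)$ and $|\digits|=|\det Q|$ one gets
\[
|\det Q|\,\leb(\fractile)=\leb(Q\fractile)\leqslant\sum_{\vecd\in\digits}\leb(\fractile+\vecd)=|\det Q|\,\leb(\fractile),
\]
so equality forces the overlaps to be null (and this holds trivially if $\leb(\fractile)=0$). The genuinely delicate step is the one you pass over as a ``standard nested-iteration argument'', namely the covering $\fractile+\Z^m=\R^m$; this is where the cited papers do the work (see e.g.\ Lagarias--Wang), and without it you have neither $k\geqslant1$ nor $\leb(\fractile)>0$. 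Second, the passage from positive measure to nonempty interior is not automatic (fat Cantor sets exist), and your phrasing ``if $U\subseteq\fractile$ is open'' already presupposes what you want; the actual argument combines a Lebesgue density point with the essentially-disjoint self-similar decomposition to produce an interior point, after which your density-of-interior argument correctly gives $\fractile=\overline{\mathrm{int}(\fractile)}$.

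In short: your plan matches the classical proofs in the cited sources, but the labor is concentrated in the covering lemma and the positive-measure-implies-interior step rather than in the null-overlap claim.
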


\begin{definition}
We call $(Q, \digits)$ a {\bf \em unit digit system} if its digit tile has measure 1.
\end{definition}

Suppose $(\seqsp,\Z^m) $ is a nonempty subshift given by a qubit substitution over a unit digit system $(Q, \digits)$. We construct two tiling spaces for $\seqsp$ that are topologically conjugate as dynamical systems over $\R^m$.
 
Let $\sq = [0,1]^m$ be the unit cube and let $\fractile$ be the digit tile for $(Q,\digits)$. Without loss of generality, we may assume that $0 \in \digits$ so that $0 $ is in both $\sq$ and $\fractile$. Let $\mA_\sq = \{(\sq, \alet), \alet \in \mA\}$ and $\mA_\fractile = \{(\fractile, \alet), \alet \in \mA\}$  be two  prototile sets, the former for constructing a tiling space $\ssp$ and the latter for constructing $\sat$. For the substitution of Example~\ref{ex:gasket} we have two fractal prototiles ({\em fractiles}) that look like this:

\centerline{\includegraphics[width=.15\textwidth]{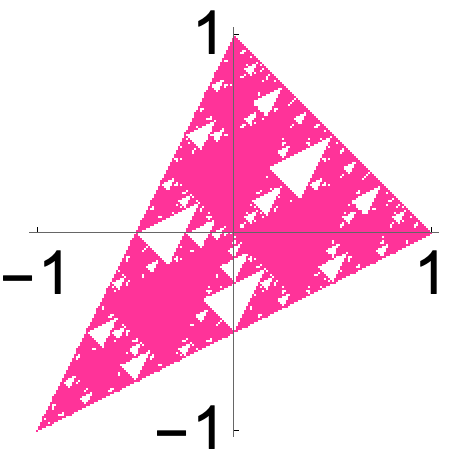} \hskip 2 em
\includegraphics[width=.15\textwidth]{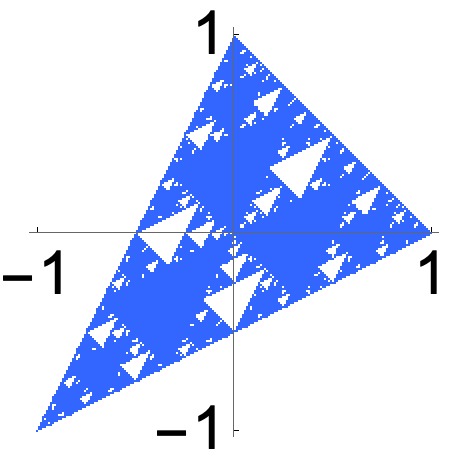}}

For each $\seq \in \seqsp$, define the tilings 
\begin{equation}
\seq_\sq = \bigcup_{\vec{j}\in \Z^m} (\sq + \vec{j}, \seq(\vec{j}))\text{ and }\seq_\fractile = \bigcup_{\vec{j}\in \Z^m} (\fractile + \vec{j}, \seq(\vec{j})),
\label{eq:tilingsfromsequences}
\end{equation}
the latter forming a tiling since $(Q, \digits)$ is a unit digit system.

\begin{claim}
The tiling dynamical systems $(\ssp, \R^m)$ and $(\sat, \R^m)$ are topologically conjugate.
\end{claim}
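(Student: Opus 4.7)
\medskip

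\textbf{Proof plan.} The idea is to define a conjugacy $\Phi\colon \ssp \to \sat$ using the common combinatorial data: the sequences $\seq \in \seqsp$ placed at integer positions. First I would observe that every tiling $\mT \in \ssp$ can be written (not necessarily uniquely) as $\mT = \seq_\sq + t$ for some $\seq \in \seqsp$ and $t \in \R^m$, because the $\Z^m$-orbit of $\{\seq_\sq : \seq \in \seqsp\}$ is dense in $\ssp$ and closed orbits in $\ssp$ arise from $\R^m$-translates of such tilings. I would then set
\[
\Phi(\seq_\sq + t) \;=\; \seq_\fractile + t,
\]
and similarly define the candidate inverse $\Psi(\seq_\fractile + t) = \seq_\sq + t$. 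The tiling $\seq_\fractile$ is well-defined as a tiling precisely because $(Q,\digits)$ is a unit digit system, so $\fractile + \Z^m$ tiles $\R^m$.

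Next I would verify well-definedness: if $\seq_\sq + t = \seq'_\sq + t'$, then matching cube positions forces $t'-t = \vec{k} \in \Z^m$ and $\seq' = \sigma^{\vec{k}}\seq$, where $\sigma$ is the shift. A short computation gives $(\sigma^{\vec{k}}\seq)_\fractile = \seq_\fractile - \vec{k}$, so $\seq'_\fractile + t' = \seq_\fractile + t$, exactly as needed. The same argument with cubes and fractiles interchanged shows $\Psi$ is well-defined, and $\Phi \circ \Psi, \Psi \circ \Phi$ are identities by construction. The $\R^m$-equivariance $\Phi(\mT + s) = \Phi(\mT) + s$ is immediate since $\Phi$ is defined by adjusting only the decoration and not the translation parameter.

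The main work, and the step I expect to be the most delicate, is checking continuity. In the local topology, $\mT$ and $\mT'$ are close precisely when they agree on a large ball $B_R(0)$ up to a small shift. If $\mT = \seq_\sq + t$ and $\mT' = \seq'_\sq + t'$ agree on $B_R(0)$ up to a shift by $\varepsilon$, then the underlying discrete data $\seq$ and $\seq'$ must agree on a comparably large box of integer positions near $-t$, modulo the same $\varepsilon$-shift. Because the digit tile $\fractile$ is compact with some fixed diameter $D = \operatorname{diam}(\fractile)$, the fractiles placed at that same box of integer positions give agreement of $\Phi(\mT)$ and $\Phi(\mT')$ on a ball of radius roughly $R - D$, up to the same $\varepsilon$-shift. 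This suffices for continuity of $\Phi$ at $\mT$; the argument for $\Psi$ is symmetric, using compactness of the unit cube.

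The only delicate point to handle is boundary ambiguity: points lying on the boundary $\partial(\fractile + \vec{j})$ (respectively $\partial(\sq + \vec{j})$) can belong to several tile supports. Since $\leb(\partial \fractile) = 0$ (a standard consequence of the unit digit system property and the self-replication $Q\fractile = \bigsqcup_{\vec d \in \digits}(\fractile + \vec d)$ up to measure zero) and likewise for the cube, this ambiguity occurs on a meager set and does not affect the local topology argument. Combining well-definedness, bijectivity, $\R^m$-equivariance, and bicontinuity yields that $\Phi$ is the desired topological conjugacy.
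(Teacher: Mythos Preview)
Your proposal is correct and follows essentially the same strategy as the paper. The paper offers two sketches: one via mutual local derivability (swap $\sq$ for $\fractile$ at the same location, keeping the label), and one via defining the homeomorphism on the transversal $\{\seq_\sq : \seq \in \seqsp\}$ by Eq.~\eqref{eq:tilingsfromsequences} and extending by translation; your argument is a fleshed-out version of the second sketch, and your continuity step (agreement on $B_R$ implies agreement on $B_{R-D}$ after swapping tile shapes) is exactly the content of the MLD observation in the first sketch. Two minor remarks: your justification that every $\mT \in \ssp$ has the form $\seq_\sq + t$ is phrased awkwardly (it follows simply because $\seqsp$ is compact, so $\ssp$ is the suspension of $\seqsp$ and contains no limit points beyond translates), and the ``boundary ambiguity'' paragraph is unnecessary since overlapping boundaries are part of the definition of a tiling and create no ambiguity in the local topology.
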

There are two ways to see that this is true. The first is to note that the spaces are {\em mutually locally derivable}; see \cite[Sec.~5.2]{BG}. To do this, a local map must be constructed that identifies, for each tiling in $\ssp$ and each $x \in \R^m$, exactly what tile to place at $x$ in the corresponding tiling in $\sat$. A local map from $\sat$ to $\ssp$ must also be constructed. In our situation, the local code simply replaces a copy of $\sq$ with a copy of $\fractile$ at the same location (or vice versa), of course keeping the same tile label.
Thus the tiling dynamical systems $(\ssp, \R^m)$ and $(\sat, \R^m)$ are mutually locally derivable, which in particular means they are topologically conjugate.

The second way to prove the claim is to construct a homeomorphism from 
$\ssp$ to $\sat$ directly. The easiest way to do this is to define the homeomorphism first on the {\em transversal}: the set of tilings for which the tile at the origin
is the trivial shift of a prototile. The homeomorphism between the transversals is given by Eq.~\eqref{eq:tilingsfromsequences}. The homeomorphism extends to $\R^m$ since every tiling in either space is the translate of some element of the transversal.

The fact that  $Q\fractile = \bigcup_{\vec{d} \in \digits} \fractile + \vec{d}$ leads to an {\bf \em inflate-and-subdivide rule} for $\sat$:
\[\phi(\fractile, \alet) = \bigcup_{\vec{d} \in \digits} (\fractile + d, \sub(\alet, \vec{d})).\]
For Example~\ref{ex:gasket}, the inflate-and-subdivide rule for the pink fractile is
pictured on the left of Figure~ \ref{fig:FracGasket}, along with its $2$- and $3$-supertiles. This figure should be compared to Figure~\ref{fig:Gasket}. The location of the origin should be noticed in both the square tile and the fractile, because under the qubit substitution the origin of a tile is moved to the elements of $\digits$. The color of each tile is specified by $\sub$.
\begin{figure}[ht]
\centering
\raisebox{.865in}{\includegraphics[width=.125\textwidth]{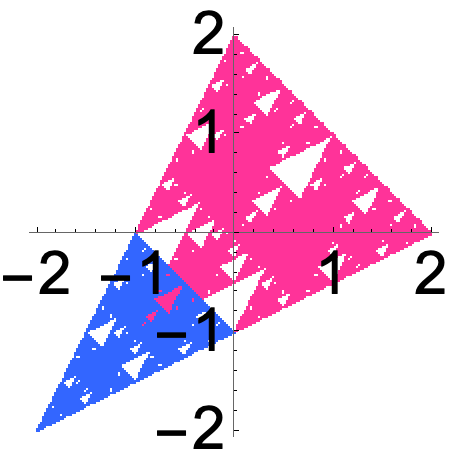}}
 \hskip 2 em
\raisebox{.475in}{\includegraphics[width=.25\textwidth]{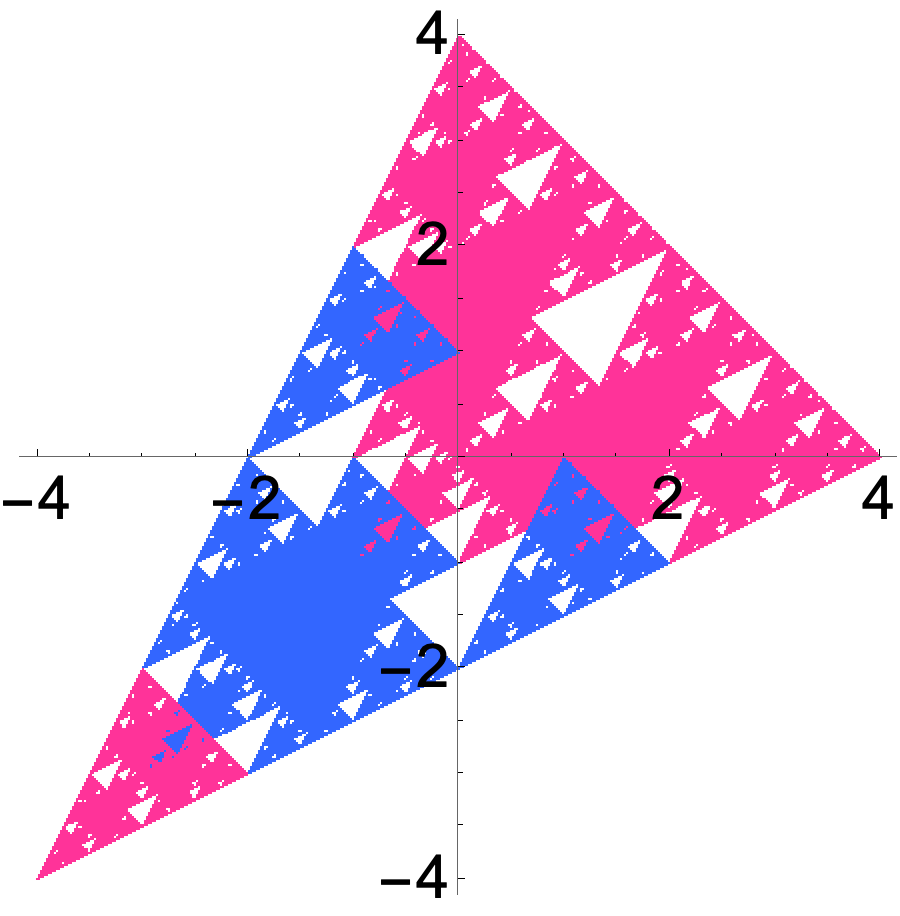}}
 \hskip 2 em
\includegraphics[width=.4\textwidth]{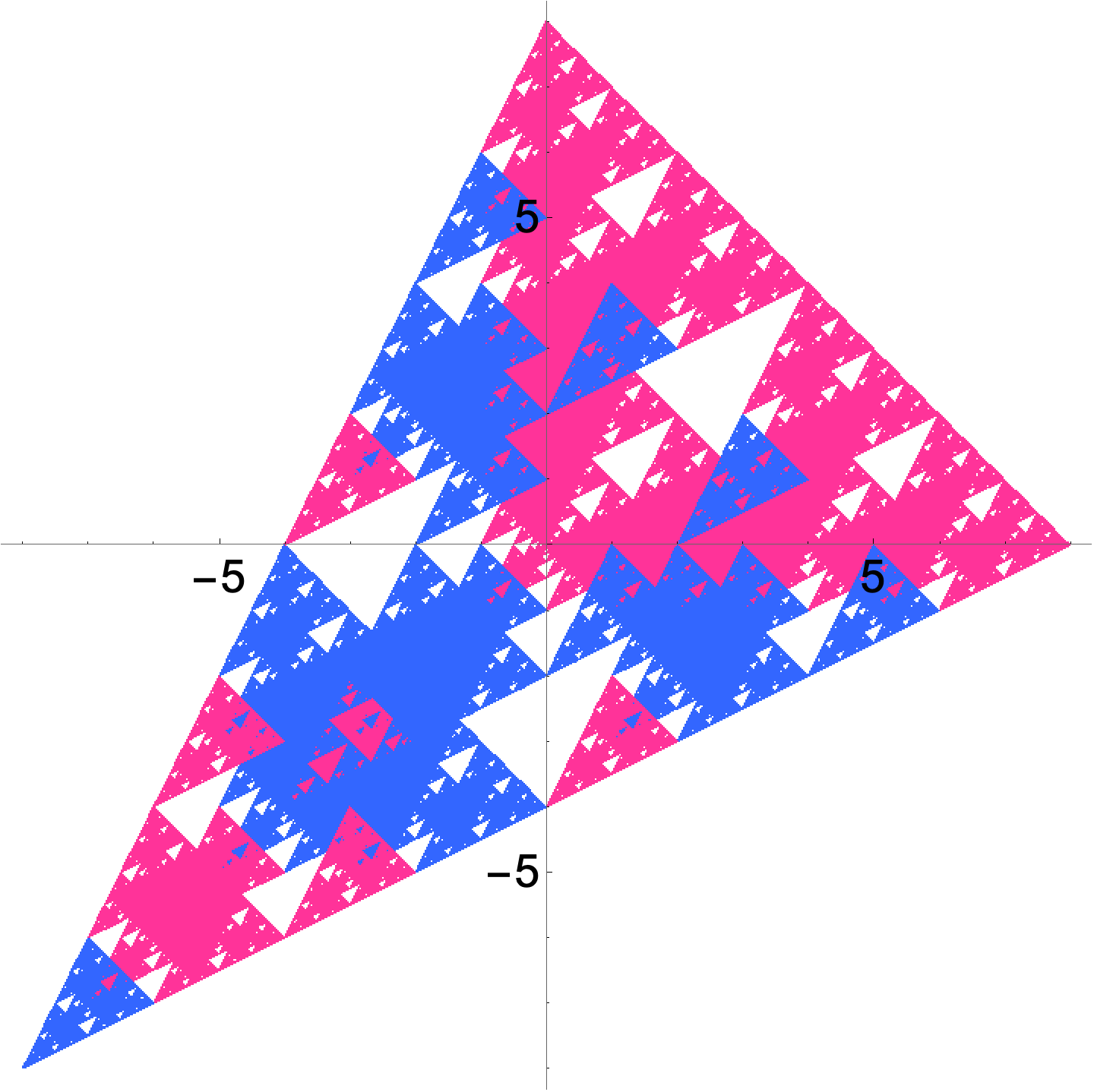}
\caption{Three iterations of the pink $\alet$-tile located at the origin.}\label{fig:FracGasket}
\end{figure} 
 
Figure \ref{fig:gasketpatch} shows a large rectangular word in some $\seq \in \seqsp$ embedded in $\ssp$ and in $\sat$. The image of $\seq$ in either space is constructed by moving the origin of the prototiles to the same location, regardless whether the prototiles are based on $\sq$ or $\fractile$.

 \begin{figure}[ht]
\centering
\includegraphics[width=.65\textwidth]{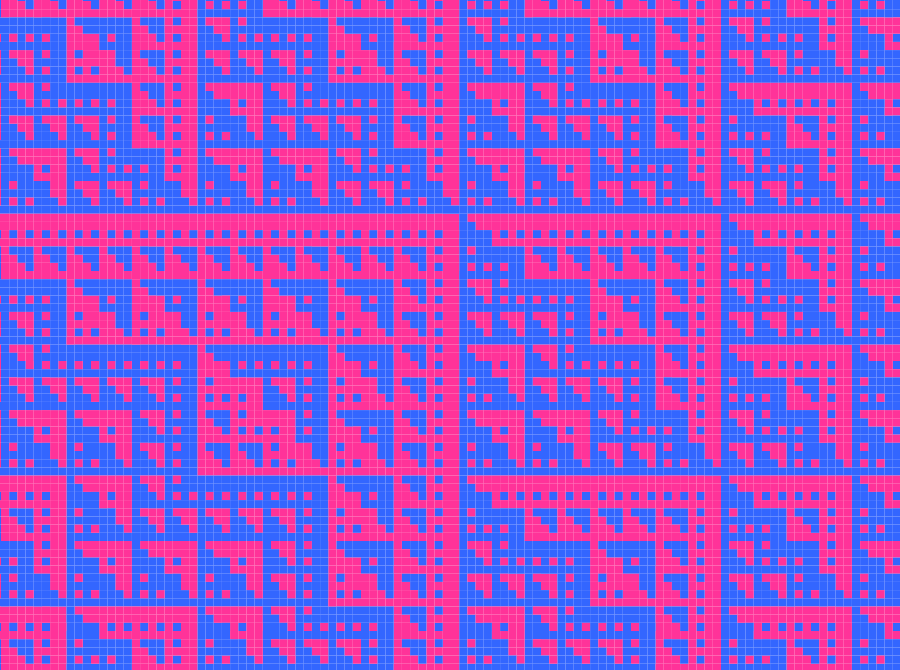}
\vskip 2 em

\includegraphics[width=.65\textwidth]{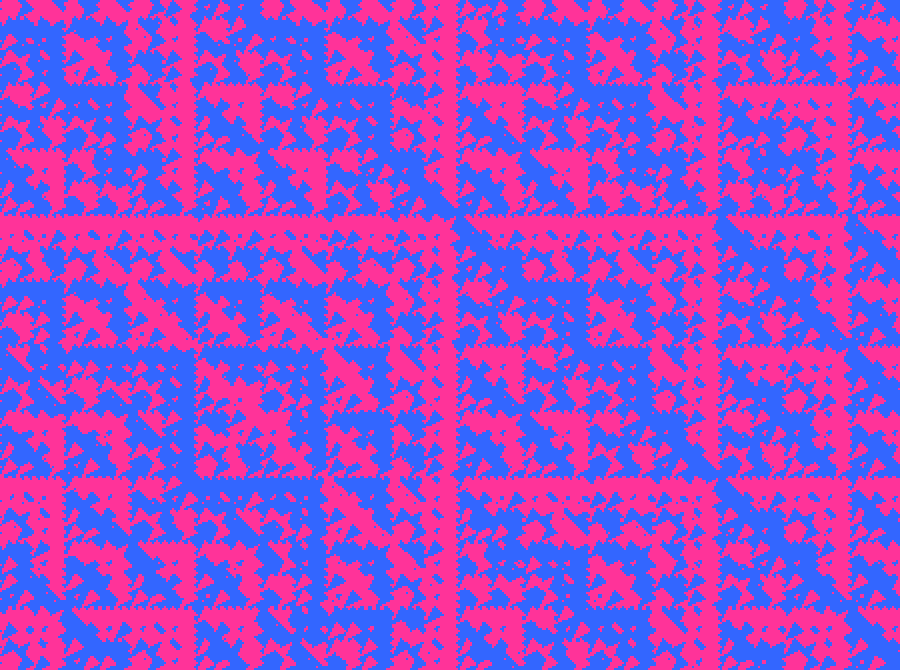}
\caption{The image of some $\seq \in \seqsp$ embedded into the tiling spaces $\ssp$ (top) and $\sat$ (bottom).}
\label{fig:gasketpatch}
\end{figure}

\begin{prop}\label{prop: strict ergod}
Let $\sub$ be a primitive qubit substitution for which $(Q, \digits)$ is a unit digit system. If the substitution subshift $(\seqsp,\mathbb{Z}^m)$ is nonempty, it is strictly ergodic and the unique invariant measure is the frequency measure $\mu$. 
\end{prop}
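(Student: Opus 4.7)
The plan is to leverage the topological conjugacy $(\ssp, \R^m) \cong (\sat, \R^m)$ already established in the Claim just above, transport the standard strict ergodicity results for primitive self-similar tilings to the $\R^m$-action on $\sat$, and then push these properties down to the $\Z^m$-subshift $(\seqsp, \Z^m)$ via the canonical transversal. The identification of the unique invariant measure with the frequency measure will then come for free by disintegration along the cube fibre.

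First I would observe that the inflate-and-subdivide rule
$$\phi(\fractile, \alet) = \bigcup_{\vecd \in \digits} (\fractile + \vecd, \sub(\alet, \vecd))$$
is a primitive tile substitution with finite local complexity (FLC). Primitivity is inherited directly from $\sub$, since $\phi$ has the same substitution matrix. FLC is automatic here: every tile in every element of $\sat$ is a translate of some prototile $(\fractile, \alet)$ by a vector in $\Z^m$, so only finitely many patches of any bounded diameter can arise up to translation. Consequently $\phi$ is a primitive, FLC, self-similar tile substitution with expansion $Q$, and the classical theorems on such tilings (see e.g.\ \cite[Ch.~4--5]{BG}) apply: the system $(\sat, \R^m)$ is minimal and uniquely ergodic, with its unique invariant Borel probability measure determined by patch frequencies, which exist and are strictly positive by primitivity. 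The conjugacy from the Claim transfers minimality and unique ergodicity verbatim to $(\ssp, \R^m)$.

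To descend to the $\Z^m$-subshift, I would use the fact that $\ssp$ is the $\R^m$-suspension of $(\seqsp, \Z^m)$ with constant unit roof function in every coordinate: writing any tiling in $\ssp$ uniquely as $\seq_\sq + t$ with $\seq \in \seqsp$ and $t \in [0,1)^m$ identifies the canonical transversal (tilings whose origin sits at the corner of a $\sq$-tile) with $\seqsp$ and the induced $\Z^m$-action on the transversal with the shift on $\seqsp$. For such a suspension, minimality and unique ergodicity of the $\R^m$-action are equivalent to those of the base $\Z^m$-action, so $(\seqsp, \Z^m)$ is strictly ergodic. Disintegrating the unique $\R^m$-invariant probability measure on $\ssp$ along the $[0,1)^m$-fibres against (normalised) Lebesgue measure produces the unique $\Z^m$-invariant probability measure $\mu$ on $\seqsp$, and its cylinder probabilities coincide with rectangular patch frequencies along any $\seq \in \seqsp$; hence $\mu$ is the frequency measure.

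The main point to handle carefully is the suspension/transversal correspondence in the multidimensional setting and the fact that the disintegration really is Lebesgue measure on $[0,1)^m$. Once that identification is in place, the transfer of minimality, unique ergodicity, and of cylinder probabilities to rectangular patch frequencies is formal, and the remaining content reduces to citing the by-now standard strictly-ergodic theorem for primitive FLC substitution tilings.
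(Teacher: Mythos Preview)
Your proposal is correct and follows essentially the same route as the paper: pass to the self-affine fractile system $(\sat,\R^m)$ via the inflate-and-subdivide rule $\phi$, invoke the standard strict-ergodicity theorem for primitive self-similar tilings (the paper cites \cite{Solomyak} rather than \cite{BG}), transfer to $(\ssp,\R^m)$ by the conjugacy Claim, and then descend to $(\seqsp,\Z^m)$ via the suspension. Your treatment of FLC and of the disintegration along the cube fibre is more explicit than the paper's, which simply notes that $\phi$ and $\sub$ share a transition matrix so the Perron--Frobenius eigenvector yields the letter frequencies; but the underlying argument is the same.
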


\begin{proof}
Construct the tiling dynamical systems $(\ssp,\R^m)$ and $(\sat, \R^m)$ for $(\seqsp, \Z^m)$. The fact that $\sat$ is a self-affine tiling dynamical system follows from the presence of the inflate-and-subdivide rule $\phi$: all finite patches in any tiling in $\sat$ appear in an $n$-supertile for some sufficiently large $n$. 
By \cite{Solomyak}, $(\sat,\R^m)$ is strictly ergodic, meaning that $(\ssp, \R^m)$ is as well.  Since $(\ssp, \R^m)$ is a suspension of $(\seqsp, \Z^m)$, we conclude that $(\seqsp, \Z^m)$ is also strictly ergodic.

As the transition matrix is the same for both the substitution $\sub$ and the inflate-and-subdivide rule $\phi$, its right Perron--Frobenius eigenvector contains the frequencies of elements of $\mA$.
\end{proof}

\subsection{Spin substitutions.}
Spin substitutions are qubit substitutions that are completely defined by three things: a digit system $(Q,\digits)$, a finite abelian group $G$ called the {\bf \em spin group}, and a map $W: \digits \times \digits \to G$. 

From this, the alphabet is defined to be $\mA = G \times \digits$ with letters written as formal products $\alet = g\vecd$. When $\dlet = e\vecd$, where $e$ is the identity of $G$, 
we call it {\bf \em spin-free}. 
From here onwards, we follow the convention that $\dlet$ is a (spin-free) element of the alphabet and $\vec{d}$ specifies a spatial location, either in $\Z^m$ or in the corresponding odometer, as we shall see later. By abuse of notation, we also denote by $\digits$ the subalphabet of $\mathcal{A}$ containing all spin-free letters. It will be clear from the context whether $\digits$
is used as a spatial object, or as a subalphabet. 
Note that $G$ acts on $\mA$ via $g(\tilde{g}\dlet)=(g\tilde{g})\dlet$. 

The canonical projections $\spin$ and $\dig$ on $\mA$ are called the {\bf \em spin map}  and {\bf \em digit map}, respectively. These projections can be extended to maps on the subshift. In this case, the spin map commutes with multiplication in $G$ but forgets its location, while the digit map forgets the spin. Letters in the alphabet therefore keep track of both spatial and algebraic information.

\begin{definition}
The {\bf \em spin substitution} $\sub$ given by $(Q, \digits, G, W)$ is defined for spin-free letters $\dlet = e \vecd$ in $\mA$ to be
\[
\sub(e\,\vecd, \vecd') = W(\vecd,\vecd') \, \dlet^{\prime}, \text{ for all } \vecd' \in \digits,
\] 
and extends to the rest of $\mA \times \digits$ via
\[
\sub(g\,\vecd, \vecd') = g \,W(\vecd,\vecd') \, \dlet^{\prime}.
\] 
\end{definition}

 Note that $\sub(\alet,\vecd)$ is indeed an element of $\mA$ since $\spin(\alet)$ and $ W(\dig(\alet),\vecd)$ are both in $G$. In particular, $\sub(\alet,\vecd)$ is an element of the equivalence class of letters \[ [ \dlet ]=\left\{g\dlet\mid g\in G\right\} \subset \mA.\]
 Put another way, no matter which $\alet \in \mA$ is substituted, the letter at $\vecd$ in $\sub(\alet)$ will be in $[\dlet]$. In this way, {\em spin substitutions keep track of their underlying digit structure}.

To summarize, spin substitutions satisfy two key properties. For each $\vecd\in \mathcal{D}$ and $\alet \in \mA$
\begin{enumerate}
\item[\textbf{(R1)}] 
$\mathcal{S}(\alet, \vecd)\in \big[\dlet\big]$ 
\item[\textbf{(R2)}] $\mathcal{S}(g\alet, \vecd)=g \mathcal{S}(\alet, \vecd)$.
\end{enumerate}

\begin{example}[Rudin--Shapiro as spin substitution] The one-dimensional Rudin--Shapiro substitution is the spin substitution given by $(Q, \digits, G, W)$, where $Q = 2$, $\digits = \{0, 1\}$, $G = C_2 = \{1, -1\}$, and $W = \begin{pmatrix}1 & 1 \\ 1 & -1 \end{pmatrix}$.
\exend
\end{example}

\begin{example}[Triomino substitution]\label{ex:triomino}
This is one of the simplest examples that generalizes Rudin--Shapiro type substitutions to three spins and digits while having reasonable geometry.
The digit system is given by $Q=\begin{pmatrix}
2& 1\\
-1& 1
\end{pmatrix}$ with digit set 
\[\digits=\big\{(0,0),(1,0),(0,1)\big\}=\left\{\vecd_1,\vecd_2,\vecd_3\right\}.\] 
Choosing $G=C_3=\left\{1, \omega, \omega^2\right\}$ with $\omega=\exp(2\pi i/3)$, one gets the nine-letter alphabet $\mathcal{A}=\big\{\alet=\omega^{j}\dlet_i,\, 0\leqslant j\leqslant 2,\, 1\leqslant i\leqslant 3\big\}$.

The letters in $\mathcal{A}$ are depicted in Figure~\ref{fig:level-0 TripleRS} as colored unit squares whose position will be given by the lower left corner, which is assumed to be in $\Z^2$. The spins are associated with colors, with spin-free letters appearing in shades of red in the leftmost column. The spin $\omega$ is represented in shades of green, and $\omega^2$ in blue.

\begin{figure}[ht]
\includegraphics[width = 2in]{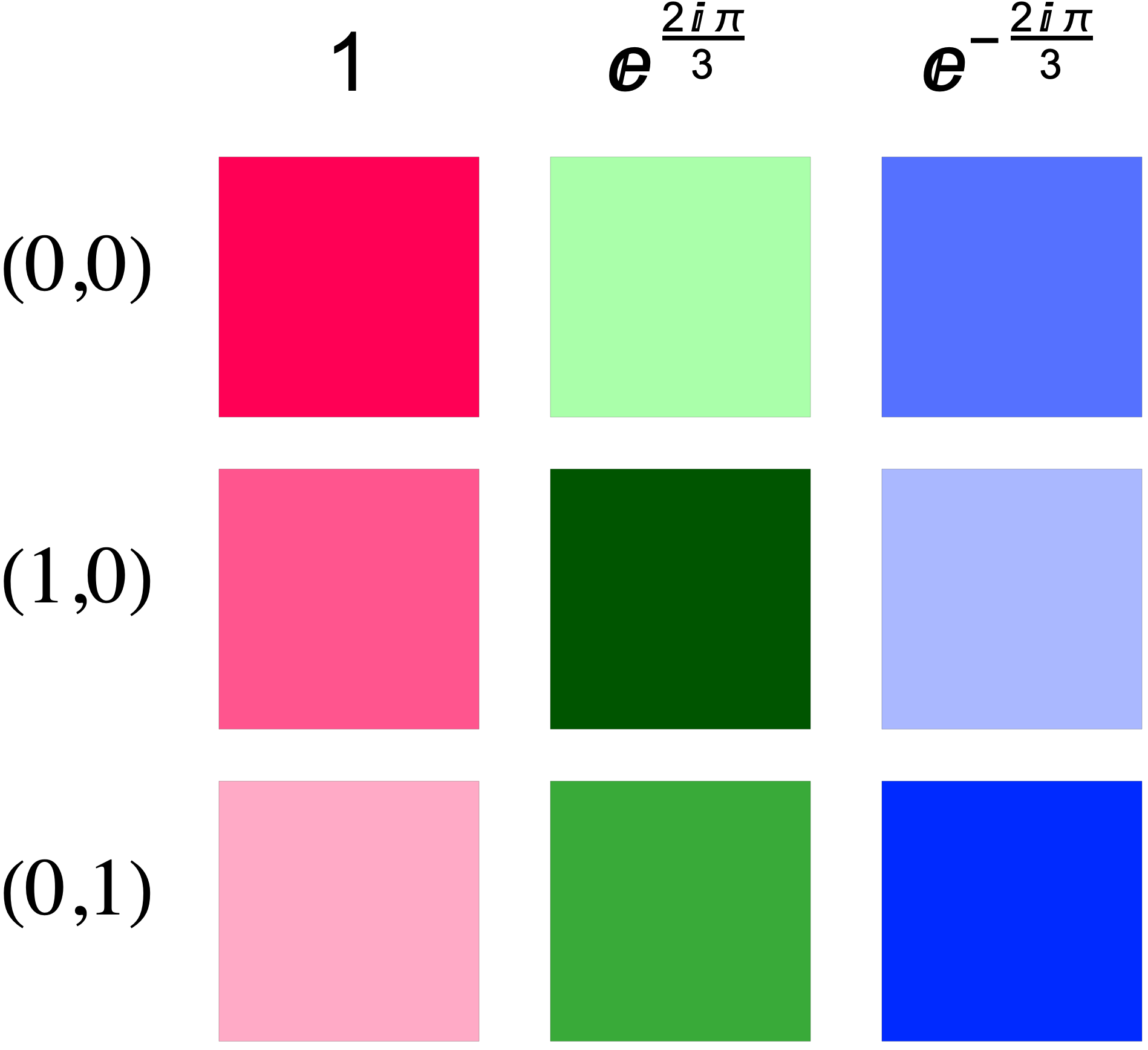}
\caption{The alphabet. Spins are depicted as colors that vary in shade by digit.}\label{fig:level-0 TripleRS}
\end{figure}

\bigskip

The map $W$ is taken to be
$W\colon (\vecd_i,\vecd_j)\mapsto \omega^{(i-1)(j-1)}$, the (Vandermonde) matrix
\[
W=\begin{pmatrix}
1 & 1 & 1 \\ 1 & \omega & \omega^2 \\ 1 & \omega^2 & \omega
\end{pmatrix}.
\]
 The substitution for the spin-free letters are given by the rows of $W$:
\begin{align}
\begin{split}
\stri(\dlet_1,\digits) &= \{\dlet_1, \dlet_2, \dlet_3\}\\
\stri(\dlet_2,\digits) &= \{\dlet_1, \omega \, \dlet_2, \omega^2 \dlet_3\}\\
\stri(\dlet_3,\digits) &= \{\dlet_1, \omega^2 \dlet_2, \omega\, \dlet_3\}
\end{split},
\label{eqn:trisubdef}
\end{align}
from which the rest of the substitution is defined via \RT.

The 1- and 2-supertiles are shown in Figure~\ref{fig: level-2 Triple RS}, arranged in the same way as in Figure~\ref{fig:level-0 TripleRS}. The leftmost column in each grid represents the supertiles for the spin-free digits. This column comes directly from the corresponding equations \eqref{eqn:trisubdef}. The remaining columns come from multiplication by $\omega$ and $\omega^2$.

\begin{figure}[ht]
\raisebox{.5in}{\includegraphics[width = 2in]{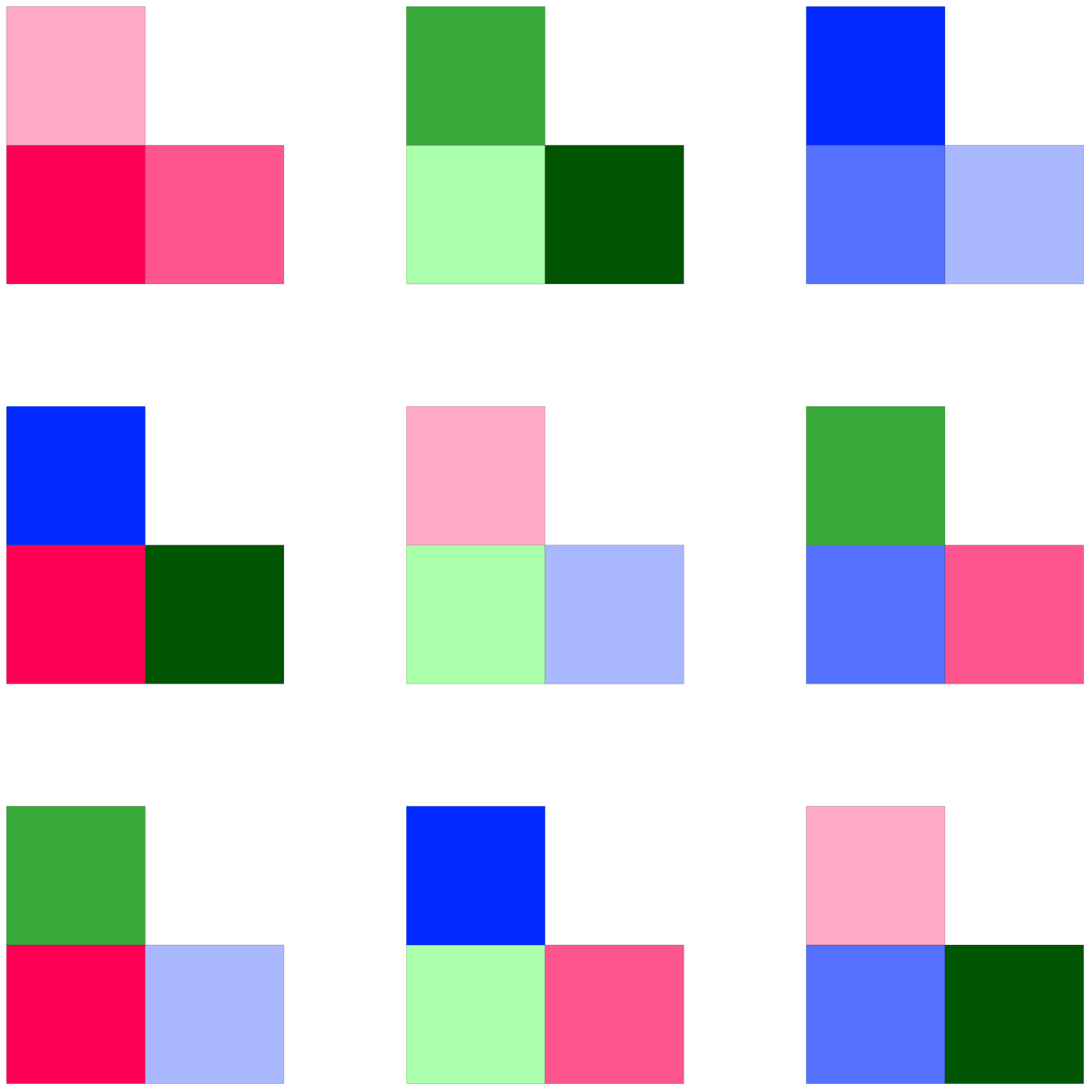}} \hspace{.5in}
\includegraphics[width = 3in]{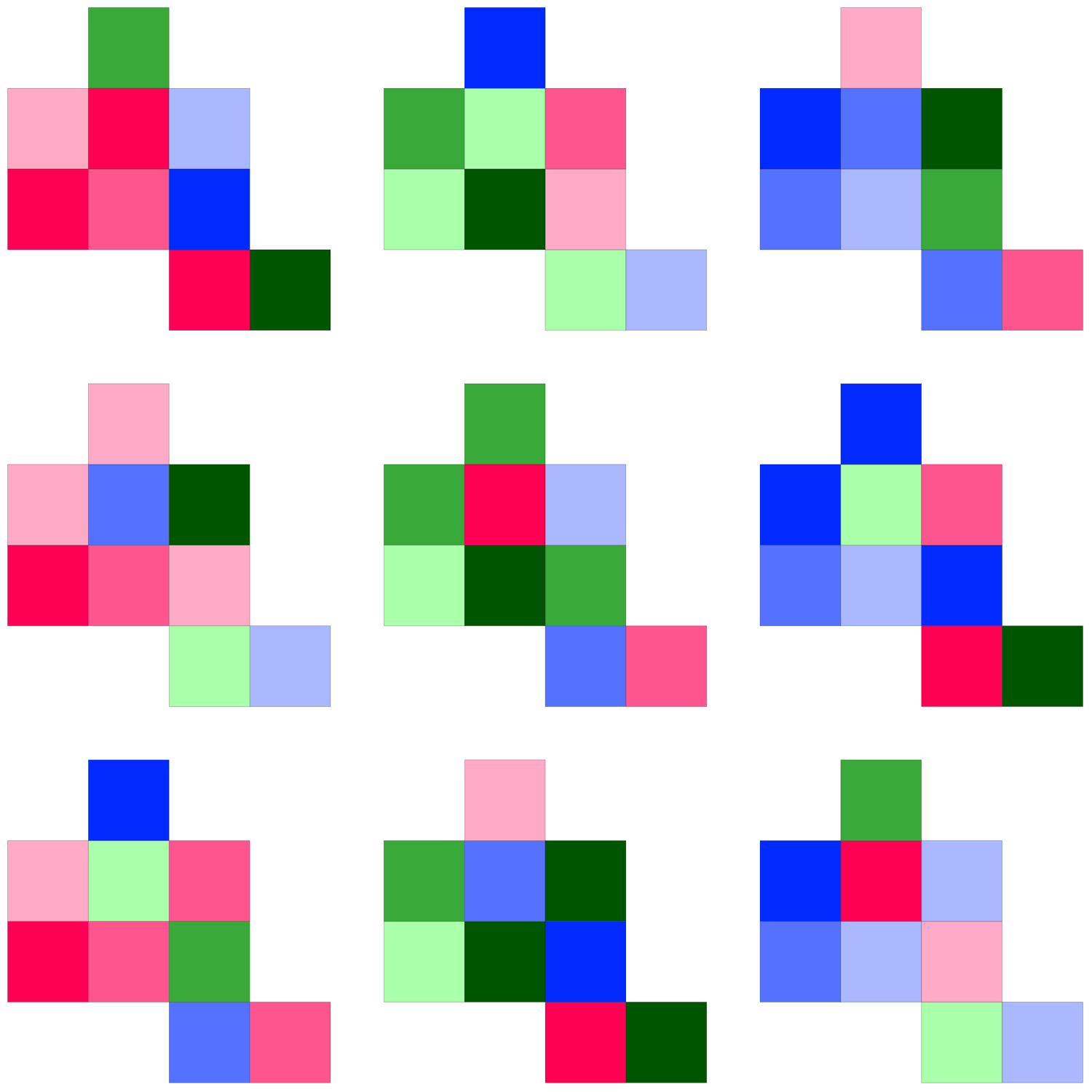}
\caption{The 1- and 2-supertiles for the triomino substitution, arranged by digit and spin as in Figure~\ref{fig:level-0 TripleRS}.}
\label{fig: level-2 Triple RS}
\end{figure}

Figure \ref{fig:tribigtile} shows $\stri^{10}(\dlet_1)$, where $\dlet_1$ is the tile in the upper left of Figure~\ref{fig:level-0 TripleRS}. 
\exend

\begin{figure}
\includegraphics[width=4in]{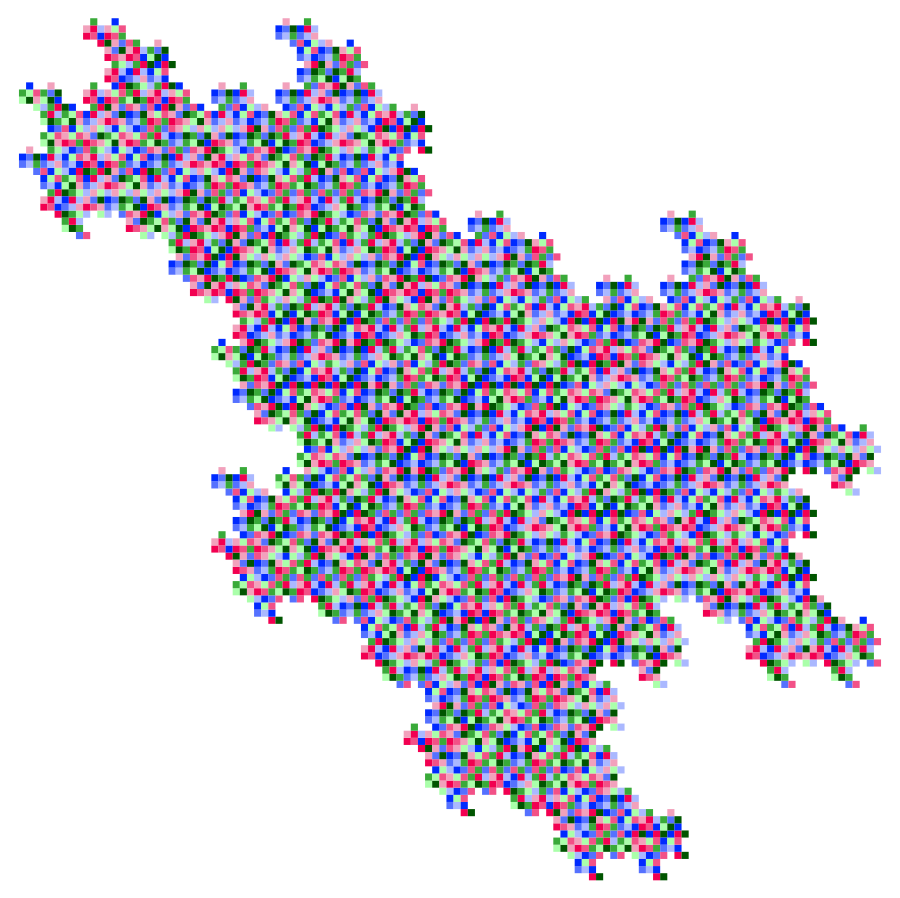}
\caption{An indication that spins are uniformly distributed in the triomino subshift.}
\label{fig:tribigtile}
\end{figure}

\end{example}

\begin{remark}

No successful attempt was made to have the spin colors reflect multiplication in $G$, but still there is a reason to associate the colors with spins rather than digits. The spin map $\spin$ produces a natural factor map that forgets the digits. In this case, a single shade of each color represents tilings in $\spin(\seqsp)$. If instead we choose the transposed color scheme wherein the digits are associated with colors and the shades depend on the spin, this factor map yields periodic tilings regardless of choice of $W$. \exend

\end{remark}

\section{Dynamical spectrum of spin substitutions}\label{sec:dynamspec}

\subsection{Spectral theory on \texorpdfstring{$L^2(\seqsp,\mu)$}{L2}}\label{sec:spectheory}
We briefly recall some notions in the spectral theory of dynamical systems. Let 
$(\seqsp,\mathbb{Z}^m,\mu)$ be a measure-theoretic dynamical system with invariant measure $\mu$. For $\vec{j}\in\mathbb{Z}^m$, we have the unitary operator $U_{\vec{j}}:L^{2}(\seqsp,\mu)\to L^{2}(\seqsp,\mu)$ given by 
$(U_{\vec{j}}f)(\T)=f(\mathcal{T}-\vec{j})$. The {\bf \em spectral or Fourier coefficients} of $f\in L^{2}(\seqsp,\mu)$ are defined for each $\vec{j} \in \Z^m$ to be
\[
\widehat{f}(\vec{j})=\big\langle U_{\vec{j}}f\mid f\big\rangle  =\int_{\seqsp} f(\mathcal{T}-\vec{j})\overline{f(\mathcal{T})}\, \dd\mu(\mathcal{T}).
\]
 Since the function $\widehat{f}: \vec{j}\mapsto  \big\langle U_{\vec j}f\mid f\big\rangle$ is positive definite, by the Herglotz--Bochner theorem, there exists a unique positive measure $\sigma_f\in\mathcal{M}^{+}(\mathbb{T}^d)$ with 
\[
\widehat{f}(\vec{j})=\int_{\mathbb{T}^d}z^{\vec{j}}\,\dd\sigma_{f}(z). 
\]
The measure $\sigma_f$ is called the {\bf \emph{spectral measure}} of $f$.
For $f\in L^{2}(\seqsp,\mu)$, its associated cyclic subspace $Z(f)$ is defined as $Z(f)=\overline{\text{span}\big\{(U_{\vec{j}}f)\mid\vec{j}\in \mathbb{Z}^m\big\}}$, which extends to $Z(\left\langle f_1,\ldots,f_r\right\rangle)$, $\left\{f_1,\ldots,f_r\right\}$ being a finite set. 
The positive measure $\sigma_f$ admits a generalized Lebesgue decomposition 
\[
\sigma_f=(\sigma_f)_{\textsf{pp}}+(\sigma_f)_{\textsf{ac}}+(\sigma_f)_{\textsf{sc}}. 
\]
into its pure point, absolutely continuous, and singularly continuous components, not all of which are trivial. 

An {\bf \em  eigenvalue} of the $\ZZ^{m}$-action of a measure-theoretic dynamical system $(\seqsp,\ZZ^{m},\mu)$ is an $\alpha\in\mathbb{R}^{m}$ for which there exists $f\in L^{2}(\seqsp,\mu)$ with $f\not\equiv 0$ that satisfies $U_{\vec{j}}f=\ee^{2\pi\ii \left\langle \alpha\mid \vec{j}\right\rangle}f$ for all $\vec j \in \Z^m$. An eigenvalue is called {\em continuous} if it has a continuous eigenfunction.

\subsection{Spin group representation} Spin substitutions have an automorphism on $\seqsp$ given by  $(g\T)(\vec{j})=g(\T(\vec{j}))$ that commutes with the shift by $\RO$. This induces an action on $L^2(\seqsp,\mu)$ given by
\begin{equation}\label{eqn:group translation}
(U_g f)(\T) = f(g\T) \text{ for all } \T \in \seqsp \text{ and } g \in G.
\end{equation}

Let $\Gdual$ be the dual group of $G$, i.e. the group of continuous characters $\chi: G \to S^1$.  We say $f \in L^2(\seqsp,\mu)$ is an {\bf \em eigenfunction} for $U_g$ if there exists $\chi \in \Gdual$ such that
\begin{equation}
(U_g f)(\T) = \chi(g) f(\T) \text{ for all } \T \in \seqsp \text{ and } g \in G.
\end{equation}
This helps provide spectral information through its own eigenfunctions, as we shall see below. Every function $f\in L^{2}(\seqsp,\mu)$ admits the spectral decomposition 
\begin{align}\label{eq:decomp function}
f(\T)=\frac{1}{|G|}\sum_{\chi\in \Gdual}\sum_{g\in G} \chi(g)f(g\T)=\sum_{\chi\in \Gdual} f^{\chi}(\T), 
\end{align}
where $f^{\chi}$ is an eigenfunction of $U_g$ with eigenvalue $\chi(g)$. 

For $g\dlet\in \mathcal{A}$, 
define $\one^{ }_{g\dlet}\in L^{2}(\seqsp,\mu)$ via $\one^{ }_{g\dlet}(\mathcal{T})=1$ whenever $\mathcal{T}(0)=g\dlet$ and zero otherwise. 
Recalling that $[\dlet]$ is the equivalence class of elements of $\mathcal{A}$ that share the same digit as $\dlet$, we define the indicator function of $[\dlet]$ as  
\[
\one^{ }_{[\dlet]}=\sum_{g\in G}\one^{ }_{g\dlet} ~.
\]
Given a character $\chi\in \Gdual$, define
\[
f^{\chi}_{\dlet}=\chi(\spin(\mathcal{T}))\one^{ }_{[\dlet]}.
\]
We will see that the spectral analysis on $L^{2}(\seqsp,\mu)$ can be narrowed down to  these functions.
\begin{example}
Let $\mathcal{S}$ be the Triomino substitution of Example~\ref{ex:triomino}.
For a fixed spin-free letter $\dlet\in \mathcal{A}$, we have 
\begin{align*}
f^{\chi_0}_\dlet&=\one^{ }_{\dlet}+\one^{ }_{\omega \dlet}+\one^{ }_{\omega^2 \dlet}\\
f^{\chi_1}_\dlet&=\one^{ }_{\dlet}+\omega \one^{ }_{\omega \dlet}+\omega^2\one^{ }_{\omega^2 \dlet}\\
f^{\chi_2}_\dlet&=\one^{ }_{\dlet}+\omega^2 \one^{ }_{\omega \dlet}+\omega \one^{ }_{\omega^2 \dlet}.
\end{align*}
where $\chi^{ }_i\in \widehat{C_3}$ ~. \exend
\end{example}

\subsection{Statement of results}\label{sec: main results}

We now have our main results regarding the dynamical spectrum of $(\seqsp,\mathbb{Z}^m,\mu)$, to be proved in the sections that follow. These results are generalizations of \cite[Thm.~4.1]{Frank1} where the spin matrices are real Hadamard matrices.

\begin{prop}\label{prop: L2 decomp}
Let $(\seqsp,\mathbb{Z}^m,\mu)$ be the measure-theoretic dynamical system induced by the subshift of a primitive, aperiodic qubit spin substitution $\mathcal{S}$ arising from $(Q,\mathcal{D})$ with map $W$ and spin group $G$. Then, one has 
\[
L^{2}(\seqsp,\mu)=\bigoplus_{\chi\in\Gdual}H^{\chi}, \quad\text{ where }\quad  H^{\chi}=Z\left(\left\langle f_{\dlet}^{\chi} \right\rangle\right). 
\]
\end{prop}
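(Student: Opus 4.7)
The plan is to derive the decomposition in two stages. First, produce the orthogonal splitting $L^{2}(\seqsp,\mu)=\bigoplus_{\chi\in\Gdual} H^\chi$ directly from the $G$-action by finite-abelian Fourier analysis; second, identify each $H^\chi$ with the cyclic subspace $Z(\langle f^{\chi}_{\dlet}\rangle)$ by invoking the skew-product representation supplied by Theorem~\ref{thm:skew product}. For the first step, observe that by \RT\ the pointwise $G$-action on $\seqsp$ is well-defined and manifestly commutes with every shift $U_{\vec j}$. Since $G$ is finite abelian, character orthogonality makes the operators $P_\chi := |G|^{-1}\sum_{g\in G}\overline{\chi(g)}\,U_g$, $\chi\in\Gdual$, into a complete system of commuting self-adjoint projections with $P_\chi P_{\chi'}=\delta_{\chi,\chi'}P_\chi$ and $\sum_\chi P_\chi=\mathrm{Id}$, whose ranges $H^\chi=\{f\in L^{2}(\seqsp,\mu):U_gf=\chi(g)f \text{ for all } g\in G\}$ give the orthogonal decomposition $L^{2}(\seqsp,\mu)=\bigoplus_{\chi\in\Gdual}H^\chi$, with each $H^\chi$ automatically $\mathbb{Z}^m$-invariant.

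A one-line calculation confirms $f^{\chi}_{\dlet}\in H^\chi$: the factor $\one^{ }_{[\dlet]}$ is $G$-invariant because $[\dlet]$ is a $G$-orbit in $\mathcal{A}$, while $\chi(\spin(\T(0)))$ transforms by the pure scalar $\chi(g)$ under $\T\mapsto g\T$. Hence $Z(\langle f^{\chi}_{\dlet}\rangle)\subseteq H^\chi$ without further argument. The reverse inclusion is the substantive point and is where Theorem~\ref{thm:skew product} enters. Under the measure-theoretic isomorphism it supplies between $(\seqsp,\mathbb{Z}^m,\mu)$ and a $G$-extension of the $(Q,\digits)$-odometer $(\odom,\mathbb{Z}^m)$, $L^{2}(\seqsp,\mu)$ factors as $L^{2}(\odom)\otimes L^{2}(G)$, the $G$-action lives on the second factor alone, and $H^\chi$ corresponds to $L^{2}(\odom)\otimes\mathbb{C}\chi$.

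Under this identification, $f^{\chi}_{\dlet}$ is identified with $\one^{ }_{A_\dlet}\otimes\chi$, where $A_\dlet\subset\odom$ is the clopen set of odometer addresses whose digit at position zero equals $\dlet$. The cocycle built into the skew-product action couples the two factors when one translates by $\vec j$, so the $\mathbb{Z}^m$-translates of $f^{\chi}_{\dlet}$ in the skew product involve $\chi$ evaluated on the cocycle, and by the explicit formula from Theorem~\ref{thm:skew product} they span a dense subspace of $L^{2}(\odom)\otimes\mathbb{C}\chi$, from which the reverse inclusion follows. The main obstacle is Theorem~\ref{thm:skew product} itself together with the concrete density analysis that the explicit cocycle enables; once the skew-product description is in hand, the present proposition is a formal corollary combining finite-abelian Fourier analysis on the fiber with the density of appropriate cylinder functions on the odometer base.
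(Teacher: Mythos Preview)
Your argument is correct and is essentially the paper's own proof, only with the two ingredients applied in a different order. The paper first passes through Theorem~\ref{thm:skew product} to the skew product $\mathcal{O}\times G$ and then invokes the Peter--Weyl splitting there (Proposition~\ref{prop: splitting peter weyl}), including the generation claim that $\widetilde H^{\chi}=Z(\langle \chi(\cdot)\one_{[\dlet]}\rangle)$; you instead perform the character decomposition directly on $\seqsp$ via the projections $P_\chi$ and only afterwards appeal to the skew product for the density statement. Both routes rest on the same two facts---finite-abelian Fourier analysis on the $G$-fiber and density of cylinder functions on the odometer base---and your identification of $f^{\chi}_{\dlet}$ with $\one_{A_\dlet}\otimes\chi$ is exactly what the paper uses. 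The one place where both you and the paper are terse is the passage from ``translates of $\one_{A_\dlet}\otimes\chi$ under the cocycle-twisted action'' to ``dense in $L^{2}(\mathcal{O})\otimes\mathbb{C}\chi$''; the paper's justification (``follows immediately since $\widetilde H^{\chi}=L^{2}(\mathcal{O},\nu)\otimes\chi$'') is no more detailed than yours, and the underlying reason---local constancy of $\phi(\vec v,\cdot)$ on fine cylinders, so that translates produce unimodular multiples of arbitrary cylinder indicators---is the one you gesture at.
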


This follows as a corollary of Theorem~\ref{thm:skew product} below, which shows that the subshift of a qubit spin substitution is measure-theoretically isomorphic to a skew product over the corresponding odometer. Note that the function $f^{\chi}$ in Eq.~\eqref{eq:decomp function} is always in $H^{\chi}$, for all $f\in L^{2}(\seqsp,\mu)$.

Proposition \ref{prop: L2 decomp} means we can do a separate spectral analysis on $H^{\chi}$ for each character $\chi\in\Gdual$. In fact, for each $H^{\chi}$, we have the following spectral purity result, which follows from an extension of the analysis by Helson in \cite{Helson} to $\Z^m$-actions; see Theorem~\ref{thm:skew product} and Proposition~\ref{prop:purity-skew}.

\begin{prop}\label{prop:purity-subshift}
The subrepresentation $U_{\vec{v}}|_{H_{\chi}}$ is spectrally pure. Equivalently, for a fixed $\chi\in \widehat{G}$, all functions in $H^{\chi}$ have the same spectral type, which is either pure point, purely absolutely continuous or purely singular continuous. \qed
\end{prop}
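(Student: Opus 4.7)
The plan is to leverage the skew product structure from Theorem~\ref{thm:skew product} and adapt Helson's classical argument on pure spectral type for group extensions to the $\Z^m$-setting. Under the measure-theoretic isomorphism, $(\seqsp,\Z^m,\mu)$ corresponds to a group extension $(\odom \times G, \Z^m, \haar \otimes \mu_G)$, where $\mu_G$ is the normalized Haar measure on the finite abelian group $G$ and the action takes the form $\vec{j}\cdot(x,g) = (x+\vec{j},\, c_{\vec{j}}(x)\,g)$ for the $G$-valued cocycle $c_{\vec{j}}\colon \odom \to G$. Under this identification, each $H^{\chi}$ is isometrically isomorphic to $L^{2}(\odom,\haar)$ via $f \mapsto f(x)\,\chi(g)$, and the restricted representation becomes the twisted Koopman operator
\[
(V^{\chi}_{\vec{j}}f)(x) \, = \, \chi(c_{\vec{j}}(x))\, f(x+\vec{j}), \qquad f \in L^{2}(\odom,\haar).
\]

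First, I would establish the intertwining relation of $V^{\chi}$ with multiplications by continuous characters $\eta \in \widehat{\odom}$. Since each such $\eta$ is a base eigenfunction with eigenvalue $\eta(\vec{j}) \in S^{1}$, a direct calculation yields $V^{\chi}_{\vec{j}} \circ M_{\eta} = \eta(\vec{j})\, M_{\eta} \circ V^{\chi}_{\vec{j}}$. Taking inner products gives $\widehat{M_{\eta}f}(\vec{j}) = \eta(\vec{j})\, \widehat{f}(\vec{j})$, and by uniqueness of the Herglotz--Bochner representation, $\sigma_{M_{\eta}f}$ is the translate of $\sigma_{f}$ on $\mathbb{T}^{m}$ by the eigenvalue of $\eta$. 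Since translation preserves the pure point, absolutely continuous, and singular continuous parts of a measure, each $M_{\eta}$ leaves the pure-type spectral subspaces of the $\Z^m$-representation $V^{\chi}$ invariant.

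Next, I would invoke the fact that the odometer base has pure point spectrum whose continuous eigenfunctions $\{\eta\}_{\eta \in \widehat{\odom}}$ form an orthonormal basis of $L^{2}(\odom, \haar)$. The weak closure of the algebra generated by $\{M_{\eta}\}$ is therefore all of $L^{\infty}(\odom)$, which is a maximal abelian von Neumann subalgebra of $\mathcal{B}(L^{2}(\odom,\haar))$. The common spectral projections $P_{\mathsf{pp}},\, P_{\mathsf{ac}},\, P_{\mathsf{sc}}$ for the commuting family $\{V^{\chi}_{\vec{j}}\}_{\vec{j}\in\Z^m}$ commute with every $M_{\eta}$ by the previous step, hence with all of $L^{\infty}(\odom)$, and so by maximality each is itself a multiplication by some indicator $\mathbf{1}_{E}$ with $E \subseteq \odom$ measurable. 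Invariance of the projection under each $V^{\chi}_{\vec{j}}$ forces $E$ to be invariant under the base $\Z^m$-action on $\odom$; unique ergodicity of the odometer then yields $\haar(E) \in \{0,1\}$. Exactly one of the three projections therefore equals the identity on $H^{\chi}$, which is precisely spectral purity.

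The main obstacle is the bookkeeping required to lift Helson's single-transformation argument to the full $\Z^m$-action. Concretely, one must check that the joint spectral projections $P_{\mathsf{pp}},\, P_{\mathsf{ac}},\, P_{\mathsf{sc}}$ are well-defined (routine since $\Z^m$ is abelian and the $V^{\chi}_{\vec{j}}$ commute), that the cocycle identity makes the intertwining $V^{\chi}_{\vec{j}}M_{\eta} = \eta(\vec{j})\, M_{\eta}V^{\chi}_{\vec{j}}$ consistent across all $\vec{j}$, and that the ergodicity invoked at the end is the joint ergodicity of the $\Z^m$-action on the odometer rather than of any individual generator. Once these checks are dispatched, the $L^{\infty}$-rigidity combined with ergodicity closes the argument and yields the claimed purity on each $H^{\chi}$.
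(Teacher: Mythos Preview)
Your proposal is correct and follows the same route the paper takes: the paper reduces Proposition~\ref{prop:purity-subshift} to Proposition~\ref{prop:purity-skew} via the skew-product isomorphism of Theorem~\ref{thm:skew product}, and for Proposition~\ref{prop:purity-skew} simply cites Helson's cocycle argument (extended from $\Z$ to $\Z^m$) without details. What you have written is precisely that Helson argument spelled out --- the intertwining $V^{\chi}_{\vec{j}}M_{\eta}=\eta(\vec{j})M_{\eta}V^{\chi}_{\vec{j}}$, the translation of spectral measures, the maximal-abelian-algebra step forcing the spectral projections to be multiplications, and the ergodicity of the base odometer forcing them to be trivial --- so you are supplying exactly the proof the paper omits. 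One cosmetic remark: in the paper's notation the base measure on $\odom$ is $\nu$ and $\haar$ is reserved for Haar measure on $G$, so you may want to adjust your notation to match.
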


The next theorem provides sufficient conditions under which $H^{\chi}$ contains pure point, purely absolutely continuous or purely singular continuous spectral measures and bounds on the corresponding spectral multiplicities. 
We first define the following notions which depend on the spin map $W$. 

\begin{definition}\label{def:unitaryrank1}
Let $\mathcal{S}$ be a qubit substitution with digit set $\mathcal{D}$ and spin matrix $W$, and let $\chi\in \Gdual$. 
We call $\mathcal{S}$ 
\begin{enumerate}
\item {\bf\emph{$\chi$-unitary}} if 
$\frac{1}{\sqrt{|\digits|}}\chi(W)$ is a unitary matrix
\item {\bf \emph{$\chi$-rank-1}} if $\chi(W)$ is a rank-1 matrix.
\end{enumerate}
\end{definition}

\begin{remark}
The condition of being $\chi$-unitary is the generalization of property \textbf{(H)} in \cite{Frank1} and is equivalent to the following: 
For every $\vecd_i\neq \vecd_j\in  \mathcal{D}$, one has 
\begin{equation}
\sum_{\dlet\in\mathcal{D}} \chi(\spin(\mathcal{S}_{\vecd_i}(\dlet)))\,\overline{\chi(\spin(\mathcal{S}_{\vecd_j}(\dlet)}))=0.
\end{equation}
Being $\chi$-rank-1 implies that the factor substitution is bijective; see Section~\ref{sec:factors-sc}. \exend
\end{remark}

As we shall see next, the two conditions in Definition~\ref{def:unitaryrank1} have explicit consequences for the spectral types present in the corresponding subspace $H^{\chi}$. We emphasize that these spectral results neither depend on the geometry of supertiles, nor on the dimension of the space $\ZZ^m$ where the elements of $\seqsp$ live.

\begin{theorem}\label{thm:main result}
Let $\mathcal{S}$ be an aperiodic primitive qubit spin substitution in $\ZZ^m$ with digit set $\mathcal{D}$ and spin matrix $W$. 
Fix $\chi\in \Gdual$.
\begin{itemize}
\item[(a)] If $\chi=\chi_0$ is the trivial character, every function $f\in H^{\chi_0}$ has pure point spectral measure. The corresponding group of eigenvalues is
\[
E_{\mathcal{O}}=\bigcup_{i\geqslant 0} (Q^{T})^{-i}\ZZ^{m},
\]
which are the eigenvalues of the corresponding odometer.
\item[(b)] If $\sub$ is $\chi$-unitary, the associated subspace $H^{\chi}$
decomposes as \[
H^{\chi}=\bigoplus_{\dlet\in\mathcal{D}} Z(f^{\chi}_\dlet)
\]
and the functions $\big\{f^{\chi}_{\dlet}\big\}$, $\dlet\in \mathcal{D}$ all have $\frac{1}{|\digits|}\mu^{ }_{\textnormal{L}}$ as their spectral measure, where $\mu^{ }_{\textnormal{L}}$ is Lebesgue measure on $\mathbb{T}^m$. In this case, the spectral multiplicity of $H^{\chi}$ is exactly $|\digits|$.
\item[(c)] If $\sub$ is $\chi$-rank-$1$, $\chi$ induces a factor map to a bijective substitution $\sub^{(\chi)}$ on the alphabet $G/\textnormal{ker}(\chi)$. 
\begin{itemize}
\item[(i)] Every $f\in H^{\chi}$ has singular spectral measure. Moreover, the spectral multiplicity of $H^{\chi}$ is bounded from above by $|G/\textnormal{ker}(\chi)|$.
\item[(ii)]  If further $\sub^{(\chi)}$ is aperiodic, $H^{\chi}$ has purely singular continuous spectrum that is generated by generalized Riesz products.
\end{itemize}
 
\end{itemize}
\end{theorem}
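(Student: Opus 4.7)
The plan is to treat the three parts separately, in each case reducing to the cyclic subspaces $Z(f^{\chi}_\dlet)$ by virtue of Proposition~\ref{prop: L2 decomp} and the spectral purity of Proposition~\ref{prop:purity-subshift}; thus it suffices to analyze the spectral measures $\sigma^{ }_{f^{\chi}_\dlet}$ in each case.

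For (a), the functions $f^{\chi_0}_\dlet=\one^{ }_{[\dlet]}$ are $G$-invariant, so $H^{\chi_0}$ is precisely the lift of $L^{2}$ of the base of the skew product in Theorem~\ref{thm:skew product}. That base is the $m$-dimensional $Q$-adic odometer $\odom$, which by standard inverse-limit arguments has pure point dynamical spectrum with group of eigenvalues equal to the dual group of $\odom$, namely $\bigcup_{i\geqslant 0}(Q^{T})^{-i}\ZZ^{m}$. Pulling back through the factor map gives both the pure point type and the eigenvalue group for $H^{\chi_0}$.

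For (b), I would compute $\widehat{f^{\chi}_\dlet}(\vec{j})$ directly using the hierarchical supertile structure, following the renormalization method of \cite{Frank1}. For a pair of positions inside a large $n$-supertile that are separated by $\vec j$, I would group them according to the last-level digits $(\vecd_i,\vecd_{i'})$ they occupy; the contribution of each group is essentially an entry of the Gram matrix of $\frac{1}{\sqrt{|\digits|}}\chi(W)$. Under the $\chi$-unitary hypothesis this Gram matrix is the identity, so only diagonal pairs survive. A renormalization induction together with the strict ergodicity of Proposition~\ref{prop: strict ergod} then yields $\widehat{f^{\chi}_\dlet}(\vec j)=\frac{1}{|\digits|}\,\delta^{ }_{\vec j,\vec 0}$, identifying $\sigma^{ }_{f^{\chi}_\dlet}$ with $\frac{1}{|\digits|}\leb$. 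The same orthogonality forces the cross-terms $\langle U_{\vec j}f^{\chi}_\dlet\mid f^{\chi}_{\dlet'}\rangle$ to vanish for $\dlet\neq\dlet'$, so the cyclic subspaces are mutually orthogonal and one obtains the direct sum decomposition together with the multiplicity $|\digits|$.

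For (c), the rank-$1$ condition $\chi(W)_{ij}=\alpha_i\beta_j$ decouples the spin and digit contributions after a change of variables: modulo $\ker\chi$, the relabeling $g\dlet_i\mapsto \chi(g)\alpha_i$ turns the substitution into one acting purely on the spin quotient $G/\ker\chi$ through the column translations $h\mapsto h\beta_j\alpha_j$, producing a bijective constant-length substitution $\sub^{(\chi)}$ on $G/\ker\chi$ of length $|\digits|$ (the precise construction is carried out in Section~\ref{sec:factors-sc}). By Proposition~\ref{prop: factors} the factor map realizes the spectral type of $\sub^{(\chi)}$ inside $H^{\chi}$, and the well-known singularity of the maximal spectral type of bijective constant-length substitutions (via the column-function arguments of Queff\'elec and subsequent refinements), combined with the purity of $H^{\chi}$, yields (i); the multiplicity bound $|G/\ker\chi|$ reflects the size of the factor alphabet. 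For (ii), aperiodicity of $\sub^{(\chi)}$ excludes any pure point component on the factor, so only singular continuous spectrum remains, and the generalized Riesz product representation emerges from iterating the self-similar relation between $\sigma^{ }_{f^{\chi}_\dlet}$ and its $Q^{T}$-pushforward induced by the supertile recursion, paralleling the classical Rudin--Shapiro analysis. The main obstacle will be in (c): producing the Riesz product representation cleanly in the general non-rectangular geometry of digit tiles and sharpening the multiplicity bound to exactly $|G/\ker\chi|$ both require careful bookkeeping through the factor system.
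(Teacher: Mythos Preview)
Your proposal is correct and follows essentially the same route as the paper: part~(a) via the identification of $H^{\chi_0}$ with $L^{2}(\odom)$ through Theorem~\ref{thm:skew product} and the odometer eigenvalue computation (Proposition~\ref{prop:odometer eigenvalues}); part~(b) via direct computation of the Fourier coefficients, where the key technical ingredient is the level-$M$ $\chi$-unitarity (Proposition~\ref{prop: level-M property V}) derived from the spin-cocycle formula of Lemma~\ref{lem: spin cocyle}, exactly the ``renormalization induction'' you describe; and part~(c) via Propositions~\ref{prop: factors} and~\ref{prop:rank1} together with the known singularity results for bijective abelian substitutions. One minor remark: the paper only asserts the multiplicity \emph{upper bound} $|G/\ker\chi|$ in (c)(i), so your anticipated obstacle of sharpening it to equality is not actually part of the statement.
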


We split the proof of this main spectral result in several sections. We prove part~(a) regarding the odometer factor in Section~\ref{sec:odometer}. Part~(b), which takes advantage of the cocycle structure satisfied by the spin, will be proved in Section~\ref{sec:ac}. 

For the proof of part~(c), which includes the presence of singular continuous components, we will need the analysis via substitutive factors and their diffraction measures in Section~\ref{sec:diffraction}. The complete proof of part~(c) is then provided in Section~\ref{sec:factors-sc}.

\begin{remark}
The spectral classification result in Theorem~\ref{thm:main result} extends to the case when $G$ is an arbitrary compact Hausdorff abelian group, but there one would need to assume that the qubit substitution is  recognizable and uniquely ergodic.
In this setting, we get that the corresponding subshift is of infinite local complexity \cite{FS2}, and then it is possible that aperiodicity no longer guarantees recognizability.
An example in one dimension is given by $\digits=\left\{0,1\right\}$, $G=S^1$ and where the spin matrix is
\[
W=\begin{pmatrix}
1 & -1\\
\alpha & \alpha
\end{pmatrix},
\] 
where $\alpha\in S^1$ is an irrational rotation. Here, $\Gdual\simeq \mathbb{Z}$. This example has countably infinite Lebesgue component,  countably infinite singular continuous component and a pure point component corresponding to the dyadic odometer. We refer the reader to \cite{MRW} for details regarding this example. \exend
\end{remark}

\subsection{Digit odometers, skew product representation, and pure point spectrum}\label{sec:odometer}

The general framework for odometers in $\ZZ^m$  was introduced in \cite{Cortez}, which generalizes both the notion of an $L$-adic odometer in one dimension and also products of $L_i$-adic odometers for $i \in \left\{1,\ldots,m\right\}$.
We will show that a qubit substitution with expansion matrix $Q$ has an underlying $Q$-odometer per \cite{Cortez}. Such $Q$-adic odometers have been used by
 \cite{LMS-2} in the context of substitution Delone sets on lattices and by \cite{Vince} in the context of digit tilings.
We recall some notions below and discuss the explicit formulation in the context of qubit substitutions.

Let $\ZZ^m= Z_1 \supseteq Z_2 \supseteq \cdots$ be a sequence of subgroups isomorphic to $\mathbb{Z}^m$. 
Since $Z_i \supseteq Z_{i+1}$, we have that $ \ZZ^{m}/Z_{i} \subseteq  \ZZ^{m}/Z_{i+1}$ and this inclusion gives rise to the inverse limit 
\[
\odom=\lim_{\leftarrow i} \ZZ^{m}/Z_i \subset \prod_{i\geqslant 1} \ZZ^{m}/Z_i.
\]
An element $\addr{k} =\big( \odigit{k}_i \big)_{i \ge 1} \in \odom$ satisfies $\odigit{k}_{i}\equiv \odigit{k}_{i+1} \pmod{Z_i}$. 
This set forms a group under coordinate-wise modular addition, defined for $\addr{k}, \addr{j} \in \odom$ to be
\[ \addr k \oplus \addr j = \left( \odigit{k}_i + \odigit{j}_i \pmod{Z_i}
\right)_{i \ge 1}
\]

The action of $\Z^m$ on $\odom$ is given by extending $\oplus$ to elements $\vec{j}  \in \Z^m$. For $\addr{k} \in \odom$, define
\[\addr k \oplus \vec j = \left( \odigit{k}_i + \vec{j} \pmod{Z_i}
\right)_{i \ge 1}.
\] This can be thought of as embedding $\Z^m$ into $\odom$ and then adding using the existing group addition. An essential fact is that $\ZZ^m$ acts freely on $\odom$ if and only if $\cap_{i\geqslant 0} Z_i=\left\{0\right\}$. 
The dynamical system $(\odom,\ZZ^m)$ is called a {\bf \em $\ZZ^m$-odometer}. 

Let $\sub$ be a qubit substitution with expansive map $Q$ and digit set $\digits$. For each $i$, let $Z_i=Q^{i}\ZZ^{m}$ and construct the odometer $\odom = \lim_{\leftarrow i} \ZZ^{m}/Q^{i}\ZZ^m$. Addition $\oplus$ is well-defined regardless of any specific choice for the equivalence classes modulo $Q^{i}\ZZ^m$. Since $Q$ is expansive, we have that $\cap_{i\geqslant 0} Q^i\ZZ^m=\left\{0\right\}$, so we know addition acts freely on $\odom$. 

Since $\digit{i}$ is a complete residue system for $Q^{i}\ZZ^m$, it provides a convenient set of coset representatives. We thus identify
\[ \odom= \lim_{\leftarrow i} \ZZ^{m}/Q^{i}\ZZ^m \cong \lim_{\leftarrow i} \digit{i}.
\]

We endow $\odom$ with the product topology and choose a measure $\nu$ to be the {\bf \em frequency measure of $\odom$} as follows. For each $M \in \N$ and $\vecd \in \digit{M}$ define a {\bf \em cylinder set} 
\[ 
\mathcal{Z}^{(M)}_{\vec{d}}= \left\{\left( \odigit{k}_i \right)_{i \ge 1} \in \odom \text{ such that } \odigit{k}_M \equiv \vec{d} \pmod{\digit{M}}
\right\}.
\]
The frequency measure $\nu$ is defined on all such $M$ and $d$ to be $\nu\big(\mathcal{Z}^{(M)}_{\vec{d}}\big)=\frac{1}{|\digits|^{M}}$, which extends to Borel subsets of $\odom$ since cylinder sets form a basis of the product topology.

\begin{definition}\label{def:subsodometer}
The {\bf \em digit odometer} associated to a qubit substitution with expansion matrix $Q$ is the odometer $\odom = \lim_{\leftarrow i} \digit{i}$ endowed with the product topology and digit frequency measure $\nu$.
\end{definition}

\begin{prop}\label{prop:odometer factor map}
The subshift of an aperiodic qubit substitution factors measure-theoretically onto its digit odometer. 
\end{prop}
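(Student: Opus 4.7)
The plan is to construct an explicit factor map $\pi_\odom : \seqsp \to \odom$ that reads off, for each $\seq \in \seqsp$ and every $n \geq 1$, the offset $\vec{j}^{(n)}(\seq) \in \digit{n}$ of the origin inside the unique level-$n$ supertile that contains it. By the fusion identity $\digit{n+1} = \digit{n} + Q^n \digits$, each level-$n$ supertile sits at relative position $Q^n \vec{e}$ inside its parent level-$(n+1)$ supertile for some $\vec{e}\in\digits$, which forces $\vec{j}^{(n+1)}(\seq) \equiv \vec{j}^{(n)}(\seq) \pmod{Q^n\Z^m}$. Hence $\pi_\odom(\seq) := (\vec{j}^{(n)}(\seq))_{n\geq 1}$ is a well-defined element of $\odom$.

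To make each $\vec{j}^{(n)}$ unambiguous, I would pass through the conjugate tiling system $(\sat,\R^m)$ of Section~\ref{subsec:tilings}: its inflate-and-subdivide rule $\phi$ makes it primitive, aperiodic, and self-affine, and every level-$n$ supertile has the connected support $Q^n\fractile+\vec{v}$. Solomyak's recognizability theorem for primitive aperiodic self-affine tilings then guarantees that for $\mu$-almost every $\seq$ and every $n$, there is a unique level-$n$ supertile of $\seq_\fractile$ whose support contains the origin; the offset from its base point is exactly $\vec{j}^{(n)}(\seq)$. Measurability of $\pi_\odom$ is automatic, since each coordinate is determined by a bounded patch of $\seq_\fractile$ around the origin, and shift-equivariance $\pi_\odom(\seq-\vec{k}) = \pi_\odom(\seq)\oplus\vec{k}$ follows because a translation only moves the origin within a fixed supertile hierarchy, with the modular reduction into $\digit{n}$ reproducing the odometer's carry-across-levels rule.

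For the push-forward identity $(\pi_\odom)_*\mu = \nu$, I would check it on cylinder sets: $\pi_\odom^{-1}\bigl(\mathcal{Z}^{(M)}_{\vec{d}}\bigr)$ is the set of $\seq$ whose origin lands at digit $\vec{d}$ inside its level-$M$ supertile, and by the unique ergodicity established in Proposition~\ref{prop: strict ergod} together with the uniform distribution of the origin across the $|\digits|^M$ positions of a generic level-$M$ supertile, this event has frequency $|\digits|^{-M} = \nu\bigl(\mathcal{Z}^{(M)}_{\vec{d}}\bigr)$. The main obstacle is the recognizability step: because qubit supertiles may be disconnected, a purely combinatorial argument inside $\seqsp$ is not available; one must instead exploit the self-affine tiling representation $\seq \mapsto \seq_\fractile$, where Solomyak's theorem applies, and then transfer the resulting local unique decomposition back through the topological conjugacy of Section~\ref{subsec:tilings}.
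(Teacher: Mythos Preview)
Your proposal is correct and follows essentially the same route as the paper: define the factor map by recording, for each level $n$, the offset of the origin inside its unique level-$n$ supertile, invoke Solomyak's unique-composition/recognizability result to make this well-defined almost everywhere, and then check equivariance with the shift. The paper's version is terser---it simply cites \cite{Solomyak2} for unique composition and verifies equivariance coordinatewise---whereas you are more explicit about routing recognizability through the self-affine fractile tiling $\sat$ (a justified caution given the possibly disconnected supertiles in $\seqsp$), and you also verify the push-forward identity $(\pi_\odom)_*\mu=\nu$ on cylinder sets, a step the paper defers to the proof of Theorem~\ref{thm:skew product}.
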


\begin{proof}
Aperiodicity implies that we have the unique composition property \cite{Solomyak2}, which means that for $\mu$-a.e. every $\mathcal{T}\in \seqsp$ and every $M\in\mathbb{N}$, there exist a unique $\mathcal{T}^{(M)}\in \seqsp$ and $\vec{j}_{M}\in \digits^{(M)}$ such that 
\begin{equation}\label{eq:unique decomp}
\mathcal{T}=\sub^{M}(\mathcal{T}^{(M)})-\vec{j}_{M}.
\end{equation} 

We write $\addr{j}:=\big(\vec{j}_M\big)_{M\geqslant 1}$.
Let $\seqsp^{\prime}\subset \seqsp$ be the subset of full $\mu$-measure for which every element has a unique decomposition above for each $M$. 
We now define the coding $\Theta$ of $\seqsp^{\prime}$ onto the odometer as 
\begin{equation}\label{eq:odom coding}
\Theta\colon \seqsp^{\prime}\to \odom,\quad \quad \Theta(\T)= \addr{j},
\end{equation} 
$\addr{j}$ being the unique sequence of translation vectors 
in Eq.~\eqref{eq:unique decomp}, where every $i$th component is seen as an element of $\ZZ^{m}/Q^{i}\ZZ^{m}$. 
This map is surjective since every sequence in $\odom$ gives rise to an element $\mathcal{T}$ in $\seqsp$. Now let $\vec{v}\in\ZZ^m$ be fixed. We show that 
$\vec{v} \circ \Theta=\Theta\circ \vec{v}$.

Fix $\mathcal{T}\in\seqsp^{\prime}$ whose odometer coding is $\addr{j}:=\big(\vec{j}_M\big)_{M\geqslant 1}$. For a fixed $M$, there exists $\T^{(M)\prime}\in \seqsp$ such that $\mathcal{T}-\vec{v}=\sub^{M}(\mathcal{T}^{(M)\prime})-(\vec{j}_M+\vec{v})\mod Q^{M}\Z^m$, where $\T^{(M)\prime}$ is an appropriate translate of $\T^{(M)}$ in Eq.~\eqref{eq:odom coding}. We then get $\big(\Theta(\mathcal{T}-\vec{v})\big)_{M}=(\vec{j}_M+\vec{v})\mod Q^{M}\ZZ^{m}=\big(\vec{v}\circ \Theta(\mathcal{T})\big)_M$, since the projection map commutes with addition for every $M$. 
\end{proof}

We show in the next result that a qubit spin substitution is measure-theoretically isomorphic to a group extension of the odometer $\mathcal{O}$ if it is aperiodic and primitive. First, we recall the definition of a skew product and construct one which fits the setting of spin substitutions. 

\begin{definition}
Let $(X,\mathcal{G},\nu)$ be a measure-theoretic dynamical system with a group action  induced by $\mathcal{G}$. Let $G$ be a compact abelian group equipped with Haar measure $\haar$.  Consider the product space $(X\times G,\nu\times \haar)$ and let $(x,g)\in X\times G$. 
From a measurable map $\phi\colon  \mathcal{G}\times X\to G$ and $v\in\mathcal{G}$, one can build $\Psi_{v}\colon X\times G\to X\times G$ via
\[
\Psi_{v}(x,g)=(v(x),\phi(v,x)g).
\]
This map $\Psi$ is called  a {\bf \em skew product} and the action $\phi$ on the second coordinate is called a {\bf \em cocycle} . The tuple $(X\times G, \mathcal{G},\nu\times \haar)$ is called a {\bf \em compact abelian group extension}, where the action of $\mathcal{G}$ on $X\times G$ is induced by $\Psi$. This is a special type of a skew product dynamical system.
\end{definition}

Let $\sub=(Q,\digits,G,W)$ be an aperiodic primitive qubit spin substitution in $\ZZ^m$ with subshift $\seqsp$ and underlying odometer $\odom$.  Write $\addr{j}=\sum_{n=0}^{\infty}Q^{n}(j^{(n)})$ and $\addr{j}+\vec{v}=\sum_{n=0}^{\infty}Q^{n}(j+v)^{(n)}$. 
Define $\phi\colon \ZZ^m\times\odom\to G$
to be 
\begin{equation}\label{eq:spin cocycle}
\phi(\vec{v},\addr{j})=\left(\prod_{\ell=0}^{M-1}W(j^{(\ell+1)},j^{(\ell)})^{-1}\right)
\left(\prod_{\ell=0}^{M-1}W((j+v)^{(\ell+1)},(j+v)^{(\ell)})\right)
\end{equation}
where
\begin{equation}\label{eq:definition M}
M:=\max \left\{n\in\mathbb{N}\colon j^{(n)}=(j+v)^{(n)} \text{ and }
 j^{(n-1)}\neq (j+ v)^{(n-1)}
\right\}.
\end{equation}
This map defines the cocycle $\Psi_{\vec{v}}$ via
\begin{equation}\label{eq:cocycle Psi}
\Psi_{\vec{v}}(\addr{j},g)=(\addr{j}+\vec{v},\phi(\vec{v},\addr{j})g).
\end{equation} 
One can check that this satisfies the cocycle property $\Psi_{\vec{v}+\vec{w}}=\Psi_{\vec{v}}\circ\Psi_{\vec{w}}$. 

\begin{theorem}\label{thm:skew product}
The dynamical system $(\seqsp,\ZZ^m,\mu)$ is measure-theoretically isomorphic to the group extension $(\mathcal{O}\times G, \ZZ^m,\nu\times \haar)$, where the action of $\ZZ^{m}$ is induced by the cocycle $\Psi$  whose cocycle action $\phi$ on the second coordinate is defined in Eq.~\eqref{eq:cocycle Psi}. 
\end{theorem}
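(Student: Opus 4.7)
The strategy is to construct an explicit measure-theoretic isomorphism $\Xi\colon \seqsp' \to \odom \times G$ on the full-measure set $\seqsp'$ from Proposition~\ref{prop:odometer factor map}, defined by
\[
\Xi(\T) \;=\; \bigl(\Theta(\T),\; \spin(\T(0))\bigr),
\]
and then to verify that $\Xi$ conjugates the $\Z^m$-shift action on $\seqsp$ to the skew product $\Psi$ on $\odom\times G$ and pushes $\mu$ forward to $\nu\times\haar$.

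The central calculation is to express $\spin(\T(\vec v))$ in terms of $\spin(\T(0))$ and $\addr j = \Theta(\T)$. Let $M$ be the index from Eq.~\eqref{eq:definition M}, so the carry in the odometer addition $\addr j \oplus \vec v$ has ended at level $M$; this forces $(j+v)^{(\ell)} = j^{(\ell)}$ for every $\ell\geq M$ and $\vec j_M + \vec v = \vec k_M$ in $\Z^m$, where $\vec k_M = \sum_{\ell=0}^{M-1} Q^\ell (j+v)^{(\ell)}$. Writing $\T = \sub^M(\T^{(M)}) - \vec j_M$ as in Eq.~\eqref{eq:unique decomp} and unfolding $\sub^M(\T^{(M)}(0))$ at the point $\vec j_M$ by iteratively applying the fusion identity $\sub^{n+1}(\alet)(Q^n\vec d + \vec k) = \sub^n(\sub(\alet,\vec d))(\vec k)$ together with \RO\ and \RT, one arrives at
\[
\T(0) \;=\; g_0^{(M)} \prod_{\ell=0}^{M-1} W\bigl(j^{(\ell+1)},\, j^{(\ell)}\bigr)\, \dlet^{(j^{(0)})},
\]
where $g_0^{(M)} = \spin(\T^{(M)}(0))$ and the boundary factor $W(j^{(M)}, j^{(M-1)})$ enters via the identification $\dig(\T^{(M)}(0)) = j^{(M)}$, itself a consequence of applying \RO\ to the one-step composition $\sub(\T^{(M+1)}(0),\, j^{(M)}) = \T^{(M)}(0)$. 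Since recognizability and the choice of $M$ force $(\T - \vec v)^{(M)} = \T^{(M)}$, repeating the same unfolding at $\vec k_M$ with the identical $g_0^{(M)}$ yields the analogous expression with every $j$ replaced by $j+v$. Taking the ratio of the two spins cancels $g_0^{(M)}$ and produces exactly the cocycle $\phi(\vec v,\addr j)$ of Eq.~\eqref{eq:spin cocycle}, which is precisely the intertwining $\Xi(\T - \vec v) = \Psi_{\vec v}(\Xi(\T))$.

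The remaining verifications are short. For injectivity on $\seqsp'$, the odometer coordinate determines the digit at every position (since $\dig(\T(\vec v)) = (j+v)^{(0)}$ is read off $\addr j \oplus \vec v$), and the cocycle relation then recovers every $\spin(\T(\vec v))$ from $\spin(\T(0))$. For surjectivity, $\Theta$ is surjective by Proposition~\ref{prop:odometer factor map}, and \RT\ provides a free $G$-action on $\seqsp$ realizing every spin in a given odometer fibre. For the pushforward, \RT\ makes the $G$-action commute with the shift, so unique ergodicity from Proposition~\ref{prop: strict ergod} forces $\mu$ to be $G$-invariant; transported through $\Xi$, this makes $\Xi_*\mu$ invariant under $(\addr j, h) \mapsto (\addr j,\, g h)$, hence its conditional measures on fibres $\{\addr j\}\times G$ are Haar. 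Since the first-coordinate pushforward is $\Theta_*\mu = \nu$, disintegration gives $\Xi_*\mu = \nu\times\haar$. The main obstacle is the hierarchical bookkeeping in the unfolding computation of $\T(0)$: one must track the evolving spin and the evolving digit simultaneously across the $M$ fusion levels and recognise that the top factor $W(j^{(M)}, j^{(M-1)})$ is supplied by the ``next'' layer of the unique composition through \RO.
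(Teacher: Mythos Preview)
Your proof is correct and follows essentially the same route as the paper: you define the same map $\Xi=\Gamma$, establish the same spin formula (which the paper isolates as Lemma~\ref{lem: spin cocyle}) by unfolding the hierarchy, and verify the intertwining $\Xi\circ U_{\vec v}=\Psi_{\vec v}\circ\Xi$ in the identical way. The only variation is the measure-preserving step: the paper computes $\mu(\Gamma^{-1}(\mathcal{Z}^{(M)}_{\vec d}\times\{g\}))$ directly on cylinders, whereas you deduce $\Xi_*\mu=\nu\times\haar$ from $G$-invariance of $\mu$ (via unique ergodicity and \RT) together with disintegration over the odometer factor---both arguments are valid and of comparable length.
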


The proof proceeds in a similar manner as the proof for bijective substitutions, which are also skew products over odometers; see also \cite{AL} for a similar construction for chained sequences. In the bijective setting, almost every $\T\in\seqsp$ is completely determined by its odometer coding and its letter $\T(0)$ at the origin; see \cite[Sec.~2.3]{Frank3}. For spin substitutions, we show that, for a subset of $\seqsp$ of full $\mu$-measure, every $\T$ is completely determined by its coding $\addr{j}\in\mathcal{O}$ and its spin $\spin(\T(0))$ at the origin. Before continuing with the proof, we need the following lemma, which 
describes the cocycle structure for the spins of level-$M$ supertiles $\sub^{M}(\dlet)$, where $\dlet$ is a spin-free letter.

\begin{lem}\label{lem: spin cocyle}
For any $\vec{i}\in\mathcal{D}^{(M)}$, one has
\begin{equation}\label{eq: spin level-M}
\spin(\mathcal{S}^{M}_{\vec{i}}(\dlet))=W\big(\dlet,i^{(M-1)}\big)W\big(i^{(M-1)},i^{(M-2)}\big)\cdots W\big(i^{(1)},i^{(0)}\big). 
\end{equation}
\end{lem}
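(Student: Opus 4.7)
The plan is to proceed by induction on $M$, using the fusion formulation of supertiles together with the defining properties \RO\ and \RT\ of spin substitutions.

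For the base case $M=1$, the position $\vec{i}=i^{(0)}$ lies in $\digits^{(1)}=\digits$, so $\mathcal{S}^{1}_{\vec{i}}(\dlet)=\mathcal{S}(\dlet,i^{(0)})$, which by the definition of a spin substitution equals $W(\dlet,i^{(0)})\,\dlet'$ for the spin-free letter $\dlet'$ of digit $i^{(0)}$. Taking $\spin$ yields $W(\dlet,i^{(0)})$, matching the claimed formula.

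For the inductive step, assume the formula for some $M\geqslant 1$ and take $\vec{i}\in\digits^{(M+1)}$ with digit expansion $\vec{i}=\sum_{n=0}^{M} Q^{n}i^{(n)}$. I would split $\vec{i}=Q^{M}i^{(M)}+\vec{i}'$ with $\vec{i}'=\sum_{n=0}^{M-1}Q^{n}i^{(n)}\in\digits^{(M)}$, and apply the fusion identity
\[
\mathcal{S}^{M+1}(\dlet)=\bigcup_{\vec{d}\in\digits}\mathcal{S}^{M}\!\bigl(\mathcal{S}(\dlet,\vec{d})\bigr)+Q^{M}\vec{d}
\]
at $\vec{d}=i^{(M)}$ to obtain $\mathcal{S}^{M+1}_{\vec{i}}(\dlet)=\mathcal{S}^{M}_{\vec{i}'}\!\bigl(\mathcal{S}(\dlet,i^{(M)})\bigr)$. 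Property \RO\ writes $\mathcal{S}(\dlet,i^{(M)})=W(\dlet,i^{(M)})\,\dlet'$ with $\dlet'$ spin-free of digit $i^{(M)}$, and property \RT\ pulls the scalar out:
\[
\mathcal{S}^{M+1}_{\vec{i}}(\dlet)=W(\dlet,i^{(M)})\cdot \mathcal{S}^{M}_{\vec{i}'}(\dlet').
\]
Applying $\spin$ together with the induction hypothesis to $\mathcal{S}^{M}_{\vec{i}'}(\dlet')$ yields the claimed product for $M+1$, once one observes that $\dig(\dlet')=i^{(M)}$ so that the leftmost factor $W(\dlet',i^{(M-1)})$ coming from the hypothesis reads $W(i^{(M)},i^{(M-1)})$ and the telescoping closes up with the new factor $W(\dlet,i^{(M)})$.

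The argument is essentially a bookkeeping induction that unwinds definitions; the fusion picture (rather than the substitutive one) is what makes the inductive step clean, because it lets one peel off a single outermost level of subdivision. The only subtlety is the notational identification between spin-free letters and their digits, which is needed for the product in the induction hypothesis to concatenate correctly with $W(\dlet,i^{(M)})$; I expect no substantive obstacle.
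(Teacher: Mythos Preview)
Your proof is correct and follows essentially the same approach as the paper: both proceed by induction on $M$, using the fusion decomposition $\mathcal{S}^{M+1}(\dlet)=\mathcal{S}^{M}(\mathcal{S}(\dlet))$ to peel off the outermost digit $i^{(M)}$, then property \RT\ to factor out $W(\dlet,i^{(M)})$ before applying the induction hypothesis to the spin-free letter of digit $i^{(M)}$. Your write-up is simply more explicit about the bookkeeping (the identification of $\dlet'$ with its digit $i^{(M)}$) than the paper's compressed version.
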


\begin{proof}
We do this by induction. For $M=1$, this is directly satisfied from the definition of $W$. We now assume it is true for $M$ and prove that it then holds 
for $M+1$. Let $\vec{i}\in \mathcal{D}^{(M+1)}$ and $\vec{i}=(i^{(M)},\ldots,i^{(1)},i^{(0)})$ be the $(Q,\digits)$-expansion of $\vec{i}$. One has
$\mathcal{S}^{M+1}(\dlet)=\mathcal{S}^{M}(\mathcal{S}(\dlet))$. Since $\mathcal{S}$ commutes with $\spin$, one then has \[\spin(\mathcal{S}^{M+1}_{\vec{i}}(\dlet))=\spin(\mathcal{S}^{M}_{\vec{i}^{\prime}}(\mathcal{S}_{i^{(M)}}(\dlet))= W(\dlet,i^{(M)})\,\spin(\mathcal{S}^{M}_{i^{\prime}}(i^{(M)})),
\] with $\vec{i}^{\prime}=(i^{(M-1)},\ldots,i^{(0)})$.  The claim then follows by induction. 
\end{proof}

\begin{proof}[Proof of Theorem~\textnormal{\ref{thm:skew product}}]
Let $\T\in \seqsp$ with unique odometer coding $\addr{j}$ and consider a translation vector $\vec{v}\in \ZZ^m$. The odometer coding of $\T-\vec{v}$ is given by $\addr{j}+\vec{v}$, which accounts for the first coordinate of $\Psi_{\vec{v}}(\addr{j},g)$ in Eq.~\eqref{eq:cocycle Psi}. 
 To find the image of the second coordinate, we show that $\spin\big((\T-\vec{v})(0)\big)$ can be computed given $\addr{j}$ and $\spin(\T(0))=g$ via Eq.~\eqref{eq:spin cocycle}, for which we exploit the previous lemma.

Recall from Eq.~\eqref{eq:definition M} that $M$ is the smallest integer such that $\vec{j}_{M-1},(\vec{j}+\vec{v})_{M-1}\in \digits^{(M)}$.
 For $\T$, this means $\T(0),(\T-\vec{j}_{M-1})(0)$ and $(\T-(\vec{j}+\vec{v})_{M-1})(0)$ belong to the same level-$M$ supertile $\sub^{M}(\alet)$ for some $\alet=h\dlet \in \mathcal{A}$.  
 For $M\in\mathbb{N}$, $\vec{j}\in\digits^{(M)}$ and a spin-free letter $\dlet$, $
\spin(\sub^{M}_{\vec{j}}(\dlet))$
is given by Eq.~\eqref{eq: spin level-M} from Lemma~\ref{lem: spin cocyle}. 

 We know that $g=\spin(\T(0))=\spin(\sub^{M}_{\vec{j}_{M-1}}(h\dlet))$ for some $h\in G$. By Property~\RT ,  $h=g\cdot \spin(\sub^{M}_{\vec{j}_{M-1}}(\dlet))^{-1}$, where the second term is completely determined by $W$ and the $(Q,\digits)$-expansion of $\vec{j}_{M-1}$. 

The spin-free type $\dlet$ of the level-$M$ supertile at the origin is given  by $j^{(M)}$. 
 With this, we  have 
\begin{align*}
\spin(\T-\vec{v})(0)&=\spin(\sub^{M}_{(\vec{j}+\vec{v})_{M-1}}(hj^{(M)}))=h\cdot\spin(\sub^{M}_{(\vec{j}+\vec{v})_{M-1}}(j^{(M)}))\\&=g\cdot \spin(\sub^{M}_{\vec{j}_{M-1}}(j^{(M)}))^{-1}\cdot\spin(\sub^{M}_{(\vec{j}+\vec{v})_{M-1}}(j^{(M)})
\end{align*}
This means that the spin at zero and the odometer coding completely determines $\T(\vec{v})$ for all $\vec{v}\in\ZZ^{m}$, hence $\T$ itself. Combining with Eq.~\eqref{eq: spin level-M}, we get that $\spin(\T-\vec{v})(0)=\phi(\vec{v},\addr{j})g$.

 The metric isomorphism $\Gamma\colon \seqsp \to \mathcal{O}\times G$ is then given by
\begin{equation}\label{eq:metric isomorphism}
\Gamma(\T)=(\Theta(\T),\spin(\T(0))), 
\end{equation}
 which satisfies
$\Gamma\circ U_{\vec{v}}= \Psi_{\vec{v}}\circ \Gamma$. 
 It remains to show that $\Gamma$ is measure-preserving, i.e., $\mu(\Gamma^{-1}(V))=(\nu\times \mu_{\text{H}})(V)$ for $V\subset \mathcal{O}\times G$ measurable. It suffices to show this for the cylinder set $V=\mathcal{Z}^{(M)}_{\vecd}\times \left\{g\right\}$, where $\vecd\in \digits^{(M)}$ for some $M$ and $g\in G$. To this end, we have $(\nu\times \mu_{\text{H}})(V)=\frac{1}{|\digits|^{M}}\cdot\frac{1}{|G|}$. The set $\Gamma^{-1}(V)\subset \seqsp$ is the set of all $\T$ whose first $M$ terms of the odometer coding coincides with that of $\vec{d}$ and such that the spin of $\T$ at the origin is $g$.

Let $\vecd_{M}=(d^{(M-1)},\ldots,d^{(0)})$ and with $\vecd_{M}\in\digits^{(M)}$.  
For $0\leqslant \ell \leqslant M-1$, the cylinder set of all tilings with a level-$\ell$ supertile of spin-free tile type $\dlet^{(\ell)}$ has measure $\frac{1}{|\digits|}$.  Multiplying all these contributions for all $0\leqslant \ell \leqslant M-1$ yields that the cylinder set of tilings with the specified supertile structure (up to level $M$) has measure $\frac{1}{|\digits|^{M}}$. Finally, since there are $|G|$ choices for the spin at the origin, we get $\mu(\Gamma^{-1}(V))=\frac{1}{|\digits|^{M}}\cdot\frac{1}{|G|}$, as desired.
\end{proof}

The isomorphism $\Gamma$ in Eq.~\eqref{eq:metric isomorphism} induces a unitary isomorphism of the corresponding function spaces. It then suffices to consider $L^{2}(\mathcal{O}\times G, \nu\times\haar)$. 
The following result is well known for compact abelian group extensions  with discrete group actions
\cite[Thm.~4.2]{Frank3}, \cite[Thm.~2]{Robbie}, \cite{Nadkarni} and \cite[Sec.~3]{EL}.

\begin{prop}\label{prop: splitting peter weyl} One has
$\widetilde{H}=L^{2}(\mathcal{O}\times G,\nu\times \haar)=\bigoplus \widetilde{H}^{\chi}$
with \[\widetilde{H}^{\chi}=\left\{f\mid f(\addr{j},g)=\chi(g)\widetilde{f}(\addr{j}), \widetilde{f}\in L^{2}(\mathcal{O},\nu)\right\}=L^{2}(\odom,\nu)\otimes \chi.\]
Moreover, $\widetilde{H}^{\chi}$ is the eigenspace of $U_g$ to the eigenvalue $\chi(g)$. Each $\widetilde{H}^{\chi}$ is $\Psi$-invariant and is generated by functions of the form $\left\{\chi(\cdot)\one_{[\dlet]}\right\}$, where $\dlet\in \digits$.
\end{prop}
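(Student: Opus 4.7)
The plan is to deduce everything from the Peter--Weyl decomposition for the compact abelian group $G$ combined with the tensor structure of $L^{2}(\mathcal{O}\times G,\nu\times\haar)$. Since $\{\chi\}_{\chi\in\Gdual}$ is an orthonormal basis of $L^{2}(G,\haar)$, Fubini--Tonelli gives the natural identification
\[
L^{2}(\mathcal{O}\times G,\nu\times\haar)\;=\;L^{2}(\mathcal{O},\nu)\otimes L^{2}(G,\haar)\;=\;\bigoplus_{\chi\in\Gdual}L^{2}(\mathcal{O},\nu)\otimes \chi,
\]
which is exactly $\bigoplus_\chi \widetilde{H}^\chi$. Explicitly, for $f\in\widetilde{H}$ one writes $f(\addr{j},g)=\sum_\chi \chi(g)\widetilde{f}_\chi(\addr{j})$ with $\widetilde{f}_\chi(\addr{j})=\int_G f(\addr{j},g)\overline{\chi(g)}\,\dd\haar(g)$, and this is the desired orthogonal expansion.

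For the eigenspace property, observe that the $G$-action on $\seqsp$ defined in Eq.~\eqref{eqn:group translation} intertwines under $\Gamma$ with left translation on the $G$-coordinate, since $\Gamma(g\T)=(\Theta(g\T),\spin((g\T)(0)))=(\Theta(\T),g\,\spin(\T(0)))$ using $\RO$. Thus for $f\in\widetilde{H}^{\chi}$,
\[
(U_g f)(\addr{j},h)=f(\addr{j},gh)=\chi(gh)\widetilde{f}(\addr{j})=\chi(g)f(\addr{j},h),
\]
which identifies $\widetilde{H}^\chi$ as the $\chi(g)$-eigenspace. For $\Psi$-invariance, apply the cocycle formula Eq.~\eqref{eq:cocycle Psi} to obtain
\[
(f\circ\Psi_{\vec{v}})(\addr{j},g)=\chi\bigl(\phi(\vec{v},\addr{j})g\bigr)\,\widetilde{f}(\addr{j}+\vec{v})=\chi(g)\bigl[\chi(\phi(\vec{v},\addr{j}))\widetilde{f}(\addr{j}+\vec{v})\bigr],
\]
so the bracketed function lies in $L^{2}(\mathcal{O},\nu)$ and the result is again in $\widetilde{H}^{\chi}$.

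For the generation claim, since $\chi$ has unit modulus, multiplication by $\chi$ is an isometric $\Z^m$-equivariant bijection $L^{2}(\mathcal{O},\nu)\to\widetilde{H}^\chi$. Hence it suffices to show that the cylinders $C_{\dlet}:=\{\addr{j}\in\mathcal{O}\colon j^{(0)}=\dlet\}$, $\dlet\in\digits$, generate $L^{2}(\mathcal{O},\nu)$ as a cyclic subspace under the $\Z^m$-action. The key point is that the odometer is the inverse limit $\varprojlim\digits^{(M)}$, so its Borel $\sigma$-algebra is generated by cylinders of the form $\mathcal{Z}^{(M)}_{\vecd}$; hence it is enough to approximate each such cylinder in $L^2$ by translates of the $\one_{C_{\dlet}}$. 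Choosing a sufficiently large $M'$ and translating by $\vec{v}\in\digits^{(M')}$ shifts the level-$0$ partition $\{C_{\dlet}\}_{\dlet\in\digits}$ into a refinement carrying information about the first $M'$ digits; taking products of characteristic functions of such translates (obtained as limits inside the cyclic subspace by the standard approximation of products by polynomials in shifts) recovers $\one_{\mathcal{Z}^{(M)}_{\vecd}}$ for any $M\leqslant M'$.

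The main obstacle is this last generation argument: cyclic subspaces under $\Z^m$ are linear spans of translates, not algebras, so one cannot literally multiply indicator functions. The rigorous way around this is to note that the map $\addr{j}\mapsto(\one_{C_{\dlet}}(\addr{j}-\vec{v}))_{\dlet,\vec{v}}$ separates points of $\mathcal{O}$, and then invoke the fact that for compact metrizable $\mathcal{O}$ the closed linear span of a point-separating family of continuous functions containing the constants is dense in $L^2$. This is the content of the cited sources \cite{Frank3,Robbie,Nadkarni,EL}, to which we defer for the remaining details.
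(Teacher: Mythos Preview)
Your treatment of the splitting, the eigenspace identification, and $\Psi$-invariance is correct and is precisely the ``direct computation'' the paper gestures at; on these points you have simply made explicit what the paper leaves implicit.

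The generation claim is where things break down, and you rightly flag it. Your proposed repair --- ``the closed linear span of a point-separating family of continuous functions containing the constants is dense in $L^2$'' --- is false: Stone--Weierstrass requires a sub\emph{algebra}, not merely a linear span (take $\{1,x\}$ on $[0,1]$). In fact the claim itself, read literally as $Z\bigl(\langle\one_{C_\dlet}:\dlet\in\digits\rangle\bigr)=L^2(\mathcal{O},\nu)$, already fails for the $2$-adic odometer: there $C_0=2\Z_2$, $C_1=1+2\Z_2$, and $U_n\one_{C_0}=\one_{C_{n\bmod 2}}$ for every $n\in\Z$, so the cyclic subspace is the two-dimensional span of $\one_{C_0},\one_{C_1}$ and misses, for instance, the character $\psi_{1/4}$. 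The paper's own one-line argument (``indicators of cylinder sets are dense, hence $Z(\langle\one_{[\dlet]}\rangle)=L^2$'') has the same gap: density of \emph{all} cylinder indicators says nothing about the translation-cyclic span of the level-$1$ ones.

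The good news is that nothing downstream actually relies on this equality. What the spectral arguments use is that $f^\chi_\dlet\in\widetilde H^\chi$ together with spectral purity (Proposition~\ref{prop:purity-skew}), so a single nonzero function already determines the spectral type of the whole subspace. If you want a true statement here, either weaken to $Z\bigl(\langle \chi(\cdot)\one_{[\dlet]}\rangle_{\dlet\in\digits}\bigr)\subset\widetilde H^\chi$, or enlarge the generating family to all cylinders $\{\chi(\cdot)\one_{\mathcal Z^{(M)}_{\vecd}}:M\geqslant1,\ \vecd\in\digits^{(M)}\}$, for which density in $L^2(\mathcal{O},\nu)\otimes\chi$ is immediate.
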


\begin{proof}
The splitting follows from a direct application  of the  Peter--Weyl theorem. The invariance of $\widetilde{H}^{\chi}$ can be shown by a direct computation.
For the last claim, note that the subspace $H^{\chi_0}$ where $\chi_0$ is the trivial character, is composed of functions which only depend on the odometer component. 
The span of the indicator functions $\left\{\one_{[\dlet]}\right\}$ of cylinder sets is dense in $L^{2}(\mathcal{O},\nu)$ and hence $H^{\chi_0}=L^{2}(\mathcal{O},\nu)=Z\big(\left\langle \one_{[\dlet]}\right\rangle\big)$. The claim for other characters follows immediately since $\widetilde{H}^{\chi}= L^{2}(\mathcal{O},\nu)\otimes \chi$.
\end{proof}

\begin{proof}[Proof of Proposition~\textnormal{\ref{prop: L2 decomp}}]
This now follows directly as a corollary of Theorem~\ref{thm:skew product} and Proposition~\ref{prop: splitting peter weyl}. 
\end{proof}

We now have the following spectral purity result. We omit the proof here and refer the reader to \cite{Helson, Queffelec2} for the version of the proof for $\mathbb{Z}$-actions; see also \cite{EL}. 

\begin{prop}\label{prop:purity-skew}
The subrepresentation $U_{\Psi_{\vec{v}}}|_{\widetilde{H}^{\chi}}$ 
is spectrally pure. Equivalently, for a fixed $\chi\in \widehat{G}$, all functions in $\widetilde{H}^{\chi}$ have the same spectral type, which is either pure point, purely absolutely continuous or purely singular continuous. \qed
\end{prop}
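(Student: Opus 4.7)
The plan is to adapt the Helson--Queffelec purity argument to the $\Z^m$-action setting, following \cite{Helson, Queffelec2, EL}. Under the identification in Proposition~\ref{prop: splitting peter weyl}, every element of $\widetilde{H}^{\chi}$ has the form $f(\addr{j},g)=\chi(g)\,\tilde{f}(\addr{j})$ with a unique $\tilde{f}\in L^{2}(\odom,\nu)$, and a direct computation from Eq.~\eqref{eq:cocycle Psi} shows that $U_{\Psi_{\vec{v}}}|_{\widetilde{H}^{\chi}}$ is unitarily equivalent to the twisted shift
\[
(V_{\vec{v}}\tilde{f})(\addr{j})=\chi(\phi(-\vec{v},\addr{j}))\,\tilde{f}(\addr{j}-\vec{v})
\]
on $L^{2}(\odom,\nu)$. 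It therefore suffices to prove that $V$ has pure spectral type on $L^{2}(\odom,\nu)$.

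The main tool is the pure point spectrum of the base odometer. The eigenfunctions $\{e_\xi\}_{\xi\in E_{\odom}}$ of the underlying odometer translation, indexed by the eigenvalues $E_{\odom}=\bigcup_{i\geqslant 0}(Q^{T})^{-i}\Z^{m}$, form an orthonormal basis of $L^{2}(\odom,\nu)$, and each multiplication operator $M_{e_\xi}$ is unitary. Since $e_\xi(\addr{j}-\vec{v})=\overline{\xi(\vec{v})}\,e_\xi(\addr{j})$, one obtains the intertwining relation
\[
V_{\vec{v}}\,M_{e_\xi}=\overline{\xi(\vec{v})}\,M_{e_\xi}\,V_{\vec{v}}.
\]
Substituting this into $\widehat{\sigma_g}(\vec{v})=\langle V_{\vec{v}}g\mid g\rangle$ with $g=M_{e_\xi}\tilde{f}$ and using unitarity of $M_{e_\xi}$ gives $\widehat{\sigma_{M_{e_\xi}\tilde{f}}}(\vec{v})=\overline{\xi(\vec{v})}\,\widehat{\sigma_{\tilde{f}}}(\vec{v})$, which identifies $\sigma_{M_{e_\xi}\tilde{f}}$ as a translate of $\sigma_{\tilde{f}}$ on $\mathbb{T}^{m}$. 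Because the pure point, absolutely continuous and singularly continuous parts of a positive measure are each preserved under translation, the subspaces $\widetilde{H}^{\chi}_{\textsf{pp}}, \widetilde{H}^{\chi}_{\textsf{ac}}, \widetilde{H}^{\chi}_{\textsf{sc}}$ are all invariant under every $M_{e_\xi}$.

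A standard Stone--Weierstrass plus Lusin-type approximation argument then promotes $M_{e_\xi}$-invariance to invariance under the entire maximal abelian algebra $\mathcal{M}=\{M_h:h\in L^{\infty}(\odom,\nu)\}$: every continuous function on $\odom$ is a uniform limit of trigonometric polynomials in the $e_\xi$, and every $L^{\infty}$ function is a bounded a.e.\ limit of continuous ones, so dominated convergence together with closedness of the three subspaces yields $\mathcal{M}$-invariance. Since $\mathcal{M}$ is maximal abelian in $B(L^{2}(\odom,\nu))$, every closed $\mathcal{M}$-invariant subspace has the form $\one_A\,L^{2}(\odom,\nu)$ for some measurable $A\subset \odom$; $V$-invariance together with $|\chi(\phi(-\vec{v},\cdot))|=1$ then forces $A=A+\vec{v}$ mod $\nu$ for every $\vec{v}\in\Z^{m}$. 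Ergodicity of the odometer action $(\odom,\Z^{m},\nu)$, inherited from the strict ergodicity of $\seqsp$ in Proposition~\ref{prop: strict ergod}, forces $\nu(A)\in\{0,1\}$, so at most one of $\widetilde{H}^{\chi}_{\textsf{pp}}, \widetilde{H}^{\chi}_{\textsf{ac}}, \widetilde{H}^{\chi}_{\textsf{sc}}$ can be non-zero, which is the claimed purity.

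The main obstacle is the final irreducibility step: one must carefully promote $M_{e_\xi}$-invariance to full $\mathcal{M}$-invariance and verify that the cocycle does not obstruct the transfer of base ergodicity to the twisted action $V$. Both arguments are routine in the $\Z$-action case treated in \cite{Helson, Queffelec2, EL}, and extend verbatim to $\Z^{m}$-actions; the key inputs, namely the pure point spectrum of the odometer and its ergodicity, are already available in our setting.
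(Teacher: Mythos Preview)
Your argument is correct and is precisely the Helson--Queffelec purity argument that the paper invokes; in fact the paper omits the proof entirely and simply refers to \cite{Helson, Queffelec2, EL}, so you have supplied more detail than the paper itself. The reduction to the twisted shift $V$, the intertwining $V_{\vec{v}}M_{e_\xi}=\overline{\xi(\vec{v})}M_{e_\xi}V_{\vec{v}}$, the resulting translation of spectral measures, and the passage from $M_{e_\xi}$-invariance to full $L^{\infty}(\odom)$-invariance followed by ergodicity are exactly the standard steps, and they carry over to $\Z^{m}$ without change.
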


Proposition~\ref{prop:purity-subshift} then follows immediately from the previous result.

\begin{remark}
Proposition~\ref{prop:purity-skew} has a very strong implication in the determination of the spectral type of $H^{\chi}$. Since the type is constant in $H^{\chi}$, it is enough to find \emph{one} function $f\in H^{\chi}$ of known spectral type in order to determine the spectral type of the \emph{entire} subspace $H^{\chi}$. This makes the use of diffraction-based techniques possible; see Section~\ref{sec:diffraction}. \exend
\end{remark}

We now focus on the subspace $H^{\chi_0}$ associated to the trivial character. By the previous result, one can identify this with 
$L^{2}(\mathcal{O},\nu)$. Since the odometer is equicontinuous, it has pure point dynamical spectrum with respect to the translation action. The following result gives the structure of the group of eigenvalues. 

\begin{prop}[{\cite[Prop.~1]{Cortez}}]\label{prop:odometer eigenvalues}
Let $\mathcal{O}=\lim_{\leftarrow i} \big(\ZZ^{m}/Z_i,\pi_i\big)$ be a $\ZZ^{m}$-odometer. The set of eigenvalues of $\mathcal{O}$ is given by 
\[
E_{\mathcal{O}}:=\bigcup_{i\geqslant 0}\left\{\alpha\in\mathbb{R}^{m}\mid \left\langle \alpha\mid z\right\rangle \in \Z \text{ for all }z\in Z_i\right\}\subset \mathbb{Q}^{m}.
\]
Moreover, every eigenvalue of $\mathcal{O}$ is continuous. \qed
\end{prop}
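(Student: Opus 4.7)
The plan is to identify $\odom$ with a compact abelian group and exploit Pontryagin duality, since the $\Z^m$-action is translation by elements in the dense image of the embedding $\Z^m\hookrightarrow \odom$, $\vec j \mapsto (\vec j \bmod Z_i)_{i\geqslant 1}$. Under this identification, an $\alpha\in \R^m$ is an eigenvalue of $(\odom,\Z^m)$ if and only if the character $\vec j\mapsto \ee^{2\pi\ii\langle\alpha\mid \vec j\rangle}$ of $\Z^m$ extends to a continuous character of $\odom$. So the problem reduces to computing $\widehat{\odom}$, where classically $\widehat{\odom}=\varinjlim\widehat{\Z^m/Z_i}$.

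First I would prove the inclusion $E_{\odom}\supseteq \bigcup_{i\geqslant 0}\{\alpha:\langle\alpha\mid z\rangle\in\Z\text{ for all }z\in Z_i\}$ by explicit construction. Given such $\alpha$ with associated index $i$, define $f_\alpha\colon \odom\to S^1$ by $f_\alpha(\addr k)=\ee^{2\pi\ii\langle\alpha\mid \odigit{k}_i\rangle}$. The divisibility hypothesis is exactly what is needed to make this independent of the choice of representative of $\odigit{k}_i$ in $\Z^m/Z_i$, so $f_\alpha$ is well-defined; it is locally constant (hence continuous) since it depends only on the $i$-th coordinate, and it satisfies $f_\alpha(\addr k\oplus\vec j)=\ee^{2\pi\ii\langle\alpha\mid\vec j\rangle}f_\alpha(\addr k)$ for all $\vec j\in\Z^m$. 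This simultaneously yields membership in $E_\odom$ and continuity of the eigenvalue.

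For the reverse inclusion, I would take a measurable eigenfunction $f$ for $\alpha\in E_\odom$. Since $\odom$ is equicontinuous (the $\Z^m$-action being by translations in a compact group) and hence uniquely ergodic, the standard argument promotes $f$ to a continuous eigenfunction. Then composing with the dense embedding $\Z^m\hookrightarrow\odom$, the function $\vec j\mapsto f(\vec j)/f(0)$ equals $\ee^{2\pi\ii\langle\alpha\mid\vec j\rangle}$, and continuity of $f$ on $\odom$ forces this character of $\Z^m$ to be uniformly continuous in the $\odom$-topology. Since cylinder sets form a neighborhood basis at $0\in\odom$, uniform continuity gives some $i$ such that $\ee^{2\pi\ii\langle\alpha\mid z\rangle}=1$ for all $z\in Z_i$, i.e.\ $\langle\alpha\mid z\rangle\in\Z$ for $z\in Z_i$, placing $\alpha$ in the RHS.

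The main obstacle is the second direction, specifically the passage from an abstract $L^2$-eigenfunction to one that is locally constant on cylinder sets. The clean way is to invoke equicontinuity plus unique ergodicity to get a continuous eigenfunction (this is where the structure of $\odom$ as a compact abelian group is used crucially), and then argue purely through the character picture as above. Everything else is a routine computation in the inverse-limit group, so the only real content is this continuity/approximation step.
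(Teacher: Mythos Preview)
The paper does not give its own proof of this proposition; it is stated with a citation to \cite{Cortez} and a terminal \qed.  Your argument is correct and is the standard one: the $\Z^m$-action on $\odom$ is rotation by a dense subgroup of a compact (in fact profinite) abelian group, so eigenvalues correspond exactly to restrictions to $\Z^m$ of continuous characters of $\odom$, and Pontryagin duality gives $\widehat{\odom}=\varinjlim_i\widehat{\Z^m/Z_i}$, which is the union of the dual lattices of the $Z_i$.  Both directions you sketch go through; the only point worth making explicit in the reverse inclusion is that \emph{minimality} (not merely equicontinuity) is what guarantees that a measurable eigenfunction agrees a.e.\ with a continuous one, and minimality follows immediately from density of the image of $\Z^m$ in $\odom$.
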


\begin{proof}[Proof of Theorem~\textnormal{\ref{thm:main result}(a)}]

From Proposition~\ref{prop:odometer factor map}, every eigenfunction $\widetilde{f}\in L^{2}(\mathcal{O},\nu)$ gives rise to an eigenfunction of $U_{\vec{v}}$ in $L^{2}(\seqsp,\mu)$ via $f=\widetilde{f}\circ \Theta$. Since $L^{2}(\mathcal{O},\nu)$ has pure point spectrum, it has a complete system of eigenfunctions, and hence so does $H^{\chi_0}$. 
The claim regarding the set of eigenvalues follows from Proposition~\ref{prop:odometer eigenvalues} and the fact that, for all $i$, $Q^{i}\ZZ^{m}$ is a full-rank sublattice with dual lattice $(Q^{T})^{-i}\ZZ^{m}$.
\end{proof}

\subsection{Unitarity and absolutely continuous spectrum}\label{sec:ac}
In this section, we provide a sufficient condition for the spectrum of $H^{\chi}$ to be purely absolutely continuous, where the multiplicity function is also completely determined.
We prove an analogue of $\chi$-unitarity for level-$M$ supertiles. The proof uses the fact that the spin labels for the level-$M$ supertiles $\mathcal{S}^M(\alet)$ can all be directly derived from the map $W$ as a cocycle, which is given in Lemma~\ref{lem: spin cocyle}. This recursive structure enables one to transfer level-$1$ $\chi$-unitarity to any finite level.

\begin{prop}\label{prop: level-M property V}
Let $\mathcal{S}$ be an aperiodic primitive qubit spin substitution with spin matrix $W$ and let $\chi\in \Gdual$. Suppose $\mathcal{S}$ is $\chi$-unitary. Then, $\mathcal{S}^M$ is also $\chi$-unitary, i.e.
\begin{equation}\label{eq: level-M property V}
\sum_{\dlet\in\mathcal{D}} \chi(\spin(\mathcal{S}^{M}_{\vec{i}}(\dlet)))\,\overline{\chi(\spin(\mathcal{S}^{M}_{\vec{j}}(\dlet)))}=0,
\end{equation}
for all $\vec{i},\vec{j}\in \mathcal{D}^{(M)}$ whose corresponding $(M-1)$st digits in the $(Q,\mathcal{D})$-adic expansion are distinct.     
\end{prop}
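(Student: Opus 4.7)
The proof should be quite short, relying on the cocycle identity for spins of level-$M$ supertiles (Lemma~\ref{lem: spin cocyle}) and then reducing $\chi$-unitarity at level $M$ to $\chi$-unitarity at level $1$.

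The plan is as follows. Fix $\vec{i},\vec{j}\in\digits^{(M)}$ whose $(M-1)$st digits satisfy $i^{(M-1)}\neq j^{(M-1)}$, and write $\vec{i}=(i^{(M-1)},\dots,i^{(0)})$ and $\vec{j}=(j^{(M-1)},\dots,j^{(0)})$. Applying Lemma~\ref{lem: spin cocyle} gives
\[
\spin(\sub^{M}_{\vec{i}}(\dlet)) = W(\dlet,i^{(M-1)})\prod_{\ell=0}^{M-2} W(i^{(\ell+1)},i^{(\ell)}),
\]
and similarly for $\vec{j}$. The point is that only the leftmost factor depends on the summation variable $\dlet\in\digits$; the remaining tail depends solely on $\vec{i}$ (resp.~$\vec{j}$).

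Pulling the $\dlet$-independent factors out of the sum, the left-hand side of Eq.~\eqref{eq: level-M property V} becomes
\[
C^{ }_{\vec{i},\vec{j}}\;\sum_{\dlet\in\digits}\chi\bigl(W(\dlet,i^{(M-1)})\bigr)\,\overline{\chi\bigl(W(\dlet,j^{(M-1)})\bigr)},
\]
where $C^{ }_{\vec{i},\vec{j}}\in S^1$ is the ratio of the tail products evaluated under $\chi$. Using the identification $\chi(W(\dlet,\vecd))=\chi(\spin(\sub^{ }_{\vecd}(\dlet)))$, the inner sum is exactly the inner product of columns $i^{(M-1)}$ and $j^{(M-1)}$ of the matrix $\chi(W)$.

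By hypothesis $\frac{1}{\sqrt{|\digits|}}\chi(W)$ is unitary, so its columns are orthonormal, and since $i^{(M-1)}\neq j^{(M-1)}$ this inner product vanishes. The overall sum is therefore zero, establishing Eq.~\eqref{eq: level-M property V}. There is no real obstacle: the proof is essentially a bookkeeping observation that only the outermost letter in the cocycle expansion interacts with $\dlet$, so level-$1$ $\chi$-unitarity propagates verbatim to every finite level. The one point worth verifying carefully is the indexing convention in the cocycle expansion so that the factor picked out really is $W(\dlet,i^{(M-1)})$ and no other entry of the product depends on $\dlet$.
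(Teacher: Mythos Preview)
Your proposal is correct and follows essentially the same approach as the paper: apply Lemma~\ref{lem: spin cocyle} to express $\spin(\sub^M_{\vec{i}}(\dlet))$ as a product in which only the leading factor $W(\dlet,i^{(M-1)})$ depends on $\dlet$, factor the $\dlet$-independent tails out of the sum as a unimodular constant, and then invoke level-$1$ $\chi$-unitarity on the remaining sum since $i^{(M-1)}\neq j^{(M-1)}$. Your write-up is, if anything, slightly more explicit than the paper's about why the reduction works.
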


\begin{proof}

We prove that Eq.~\eqref{eq: level-M property V} is satisfied if the level-$1$ version is. Let $(i^{(M-1)},\ldots,i^{(1)},i^{(0)})$ and $(j^{(M-1)},\ldots,j^{(1)},j^{(0)})$
be the $(Q,\digits)$-adic expansion of $\vec{i}$ and $\vec{j}$, respectively, where we assume that $i^{(M-1)}\neq j^{(M-1)}$. Inserting Eq.~\eqref{eq: spin level-M}, and using the multiplicativity of $\chi$, one gets
\begin{align*}
\alpha^{ }_{\vec{i}}\,\overline{\alpha^{ }_{\vec{j}}}\sum_{\dlet\in\mathcal{D}} \chi(W(\dlet,i^{(M-1)}))\,\overline{\chi(W(\dlet,j^{(M-1)}))}=0, 
\end{align*}
which follows from the level-$1$ version since $i^{(M-1)},j^{(M-1)}\in \mathcal{D}$ with $i^{(M-1)}\neq j^{(M-1)}$, thus proving the claim. Here, $\alpha^{ }_{\vec{i}}=\prod^{M-1}_{\ell=1}\chi(W(i^{(\ell)},i^{(\ell-1)}))$.
\end{proof}

\begin{proof}[Proof of Theorem~\textnormal{\ref{thm:main result}(b)}]
We first show that the associated spectral measure to each $f^{\chi}_{\dlet}$ is a constant multiple of Lebesgue measure. It is clear that $\widehat{f^{\chi}_{\dlet}}(0)=\frac{1}{|\digits|}$. We need to show that, for all $\vec{j}\neq 0$ and $\dlet=\dlet^{\prime}$,  \begin{equation}\label{eq: fourier cross correlation}
\big\langle
U_{\vec{j}}f^{\chi}_{\dlet} \mid f^{\chi}_{{\dlet}^{\prime}}
\big\rangle=\int_{\seqsp} f^{\chi}_{\dlet}(\mathcal{T}-\vec{j})\,\overline{f^{\chi}_{{\dlet}^{\prime}}(\mathcal{T})}\,\dd\mu(\mathcal{T})=0.
\end{equation} 
Note that it suffices to consider $\vec{j}\in Q\ZZ^{m}$, since, for any other $\vec{j}$, the functions $f^{\chi}_{\dlet}$ and $U_{\vec{j}}f^{\chi}_{\dlet}$ cannot simultaneously be non-zero due to property \RO.   
Let $\vec{j}\in\mathbb{Z}^m$ and ${\dlet}\in\mathcal{D}$ be fixed. We can restrict to the set of $\mathcal{T}\in \seqsp$ with $\mathcal{T}(0),\mathcal{T}(x)\in [\dlet]$. For each $M\in\mathbb{N}$, define 
$\seqsp^{(\dlet,{\dlet}^{\prime)}}(M)$ to be the set of all $\mathcal{T}\in \seqsp$ which satisfy $\mathcal{T}(0)\in [\dlet^{\prime}],\mathcal{T}(\vec{j})\in [\dlet]$, and for which $M$ is the smallest integer such that $\mathcal{T}(0)$ and $\mathcal{T}(\vec{j})$ are contained in the same level-$M$ supertile. Set $\seqsp(M):=\seqsp^{(\dlet,\dlet)}(M)$.
 
One then has
\[
\int_{\seqsp} \one^{ }_{[\dlet]}(\mathcal{T}-\vec{j})\one^{ }_{[\dlet]}(\mathcal{T})\dd\mu(\mathcal{T})=\sum_{M=1}^{\infty}\mu(\seqsp(M)).
\]
Now consider the following subset of $\mathcal{D}^{(M)}$:
\[
P(M)=\bigg\{\vec{p}\in \mathcal{D}^{(M)}\mid \vec{p}+\vec{j}\in \mathcal{D}^{(M)},\,  \vec{p}\text{ and }\vec{p}+\vec{j}\text{ are not in the same level-}(M-1)\text{ supertile} \bigg\}.
\]
Assuming that $\mathcal{S}^{M}_{\vec{p}}(\alet),\mathcal{S}^{M}_{\vec{p}+\vec{j}}(\alet)\in [\dlet]$,  with $\vec{p}\in P(M)$ and $\alet\in \mathcal{A}$, construct  the set \[\seqsp(\alet,M,\vec{p})=\big\{\mathcal{T}\in \seqsp(M)\mid \mathcal{T}(0)=\mathcal{S}^{M}_{\vec{p}}(\alet)\big\}.\]
This is the set of all elements of $\seqsp$ where the $M$-supertile at the origin is of type $\alet$ at position $\vec{p}$. 
This yields the decomposition
$\seqsp(M)=\bigcup_{\vec{p}\in P(M)}\bigcup_{\alet \in \mathcal{A}} \seqsp(\alet, M,\vec{p})$.

Going back to the Fourier coefficients, observe that
the functions $f^{\chi}_{\dlet}$ are constant over $\seqsp(\alet, M,\vec{p})$, where one has  $f^{\chi}_{\dlet}(\mathcal{T})=\chi\big(\spin(\mathcal{S}^{M}_{\vec{p}}(\alet))\big)$ for any $\mathcal{T}\in \seqsp(\alet, M,\vec{p})$. Using this decomposition and $f^{\chi}_{\dlet}(\mathcal{T}-\vec{j})=\chi\big(\spin((\mathcal{T}-\vec{j})(0))\big)\one^{ }_{[\dlet]}(\mathcal{T}-\vec{j})$, we find
\begin{align*}
\widehat{f^{\chi}_{\dlet}}(\vec{j})&=\sum_{M=0}^{\infty}\sum_{\vec{p}\in P(M)}\sum_{\alet\in\mathcal{A}}\int_{\seqsp(\alet,M,\vec{p})} \chi\big( \spin((\mathcal{T}-\vec{j})(0))\big)\, \overline{\chi\big(\spin(\mathcal{T}(0))\big)} \,\dd\mu(\mathcal{T})\\
&=\sum_{M=0}^{\infty}\sum_{\vec{p}\in P(M)}\sum_{\alet\in\mathcal{A}}
\chi\big(\spin(\mathcal{S}^{M}_{\vec{p}+\vec{j}}(\alet))\big)\,\overline{\chi\big(\spin(\mathcal{S}^{M}_{\vec{p}}(\alet))\big)} \,\mu(\seqsp(\alet,M,\vec{p}))\\
&=\sum_{M=0}^{\infty}\sum_{\vec{p}\in P(M)} C(M,\vec{p})\sum_{\dlet\in\mathcal{D}} \chi\big(
\spin(\mathcal{S}^{M}_{\vec{p}+\vec{j}}(\dlet))\big)\,\overline{\chi\big(\spin(\mathcal{S}^{M}_{\vec{p}}(\dlet))\big)}=0. 
\end{align*}
Note that the product $\chi\big(\spin(\mathcal{S}^{M}_{\vec{p}+\vec{j}}(\alet))\big)\,\overline{\chi\big(\spin(\mathcal{S}^{M}_{\vec{p}}(\alet))\big)}$ is constant for all $\alet \in [\dlet]$ by property \RT. Moreover, $\mu(\seqsp(\alet,M,\vec{p}))$ only depends on $M$ and $\vec{p}$. This means that there are $|G|$ copies of the same summand in the second equality, which implies the third one. There, one has $C(M,\vec{p})=|G|\mu(\seqsp(\dlet,M,\vec{p}))$, where $\mu(\seqsp(\dlet,M,\vec{p}))$ does not depend on $\dlet$.  The last equality then follows from Proposition~\ref{prop: level-M property V}.

We next show that different spin-free letters generate orthogonal cyclic subspaces, i.e., 
 $Z\big(f^{\chi}_{\dlet}\big)\perp Z\big(f^{\chi}_{\dlet^{\prime}}\big)$ for $\dlet\neq \dlet^{\prime}$ whenever $\sub$ is $\chi$-unitary. To show this, one has to prove that 
Eq.~\eqref{eq: fourier cross correlation} holds
for $\dlet\neq \dlet^{\prime}$.
One can then proceed as in the previous proof except that we  now consider $\seqsp^{(\dlet,\dlet^{\prime})}$ so that 
$\seqsp^{(\dlet,\dlet^{\prime})}=\bigcup_{\vec{p}\in P(M)}\bigcup_{\alet\in\mathcal{A}} \seqsp^{(\dlet,\dlet^{\prime})}(\alet,M,\vec{p})$.
Here, $\mathcal{S}^{M}_{\vec{p}}(\alet)\in  [\dlet^{\prime}]$ and $\mathcal{S}^{M}_{\vec{p}+\vec{j}}(\alet)\in [\dlet]$, and 
\[
\seqsp^{(\dlet,\dlet^{\prime})}(\alet,M,\vec{p})=\big\{\mathcal{T}\in \seqsp(M)\mid \mathcal{T}(0)=\mathcal{S}^{M}_{\vec{p}}(\alet)\text{ and } (\T-\vec{j})(0)=\mathcal{S}^{M}_{\vec{p}+\vec{j}}(\alet)\big\}.
\]
  One can check that $
\chi$-unitarity, together with this decomposition, lead to the inner product in Eq.~\eqref{eq: fourier cross correlation} being $\int_{\seqsp} f^{\chi}_\dlet(\mathcal{T}-\vec{j})\,\overline{f^{\chi}_{\dlet^{\prime}}(\mathcal{T})}\,\dd\mu(\mathcal{T})=0$, for all $\vec{j}\in\mathbb{Z}^m$, which proves the claim.
\end{proof}

\section{Diffraction and dynamical spectrum}\label{sec:diffraction}

\subsection{Spectral measures via diffraction measures}

Another spectral notion associated to tilings and point sets is the diffraction $\widehat{\gamma}$, which is a positive measure on $\mathbb{R}^m$. Given an element $\mathcal{T}$ of the subshift $\seqsp$, one can derive from it a colored point set $\varLambda_{\mathcal{T}}$ by choosing control points for each tile. We can pick the control point to be a vertex or the center of the cube such that the underlying point set becomes $\mathbb{Z}^m$ and $\varLambda_{\mathcal{T}}=\bigcup_{g\in G}\bigcup_{\dlet\in\digits} \varLambda_{g\dlet}$ . From $\varLambda_{\mathcal{T}}$, one can then build a weighted Dirac comb $\omega$ by considering a complex-valued weight function $w\colon \varLambda_{\mathcal{T}} \to \mathbb{C}$, which yields
$
\omega=\sum_{\vec{x}\in\mathbb{Z}^m} w(\vec{x})\delta_{\vec{x}}
$. 

In our situation, the autocorrelation of $\omega$ is known to be 
 \begin{equation}\label{eq: autocorrel coeff}
 \gamma=\sum_{\vec{j}\in\mathbb{Z}^m} \eta(\vec{j})\delta_{\vec{j}}, \text{ where }  \eta(\vec{j})=\lim_{N\rightarrow\infty} \frac{1}{(2N+1)^d}\sum_{|\vec{x}|\leqslant N} w(\vec{x})\overline{w(\vec{x}-\vec{j})}.
 \end{equation} 
 The existence of the  {\bf \em autocorrelation coefficients} $\eta(\vec{j})$ is guaranteed due to unique ergodicity.
The Fourier transform $\widehat{\gamma}$ is the {\bf \em diffraction measure}. Like spectral measures, the diffraction admits a decomposition into pure point, absolutely continuous, and singular continuous components. We refer the reader to \cite{BG} for a thorough introduction on diffraction theory.

It is well known that there is a connection betwen the spectral measure of functions in $L^{2}(\seqsp,\mu)$ and the admissible diffraction measures. In particular, for systems with pure point spectra, both notions are equivalent, with very little assumptions on ergodicity and local complexity \cite{BL}. For systems with mixed spectra, it is more complicated, and one usually needs a (possibly infinite) family of diffraction measures to characterize the entire dynamical spectrum \cite{BLvE}.

Recently, Lenz has shown in \cite{Lenz} that all spectral measures of a dynamical system with an action of a locally compact, $\sigma$-compact abelian group $G$ are recoverable as diffraction measures. 
Recall from above that the functions $f^{\chi}_{\dlet}$ generate $L^{2}(\seqsp,\mu)$. We show below that there is an explicit weighted Dirac comb whose diffraction is exactly the spectral measure $\sigma_{f^{\chi}_{\dlet}}$. To this end, we need the following notions; see \cite{Lenz}. 

Let $(\seqsp,\ZZ^m,\mu)$ be an ergodic dynamical system and $U$ be the corresponding Koopman operator for the $\ZZ^m$-action.
Let $f\in L^{2}(\seqsp,\mu)$. Consider the map $\mathcal{N}\colon C_{\text{c}}(\ZZ^m)\to L^{2}(X,\mu)$ given by 
$\mathcal{N}^{f}(\phi):=\sum_{\vec{x}\in\ZZ^m} \phi(\vec{x})U_{\vec{x}}f$,
for $\phi\in C_{\text{c}}(\ZZ^m)$, which is linear and $U$-equivariant.
To $\mathcal{N}^{f}$, one can canonically associate a diffraction measure $\widehat{\gamma^{ }_{f}}$, where the autocorrelation $\gamma^{ }_{f}$ is of the form $\gamma^{ }_{f}=\sum_{\vec{j}\in\ZZ^m} \eta(\vec{j})\delta_{\vec{j}}$, with 
\[
\eta(\vec{j})=\eta_f(\vec{j}):= \left\langle U_{\vec{j}}\mathcal{N}^{f}(\one_{0})\mid\mathcal{N}(\one_0) \right\rangle.
\]
Here, $\left\langle \cdot\mid\cdot\right\rangle$ is the inner product in $L^2(\seqsp,\mu)$ and $\one_0$ is the lookup function at the origin. One then has the following result.

\begin{theorem}[{ \cite[Thm.~4]{BLvE},\cite[Thm.~2.1]{Lenz}}]\label{thm:diffraction dynamical}
Let $f\in L^{2}(\seqsp,\mu)$. Then, the spectral measure $\sigma_f$ of $f$ is the diffraction measure $\widehat{\gamma^{ }_f}$. \qed
\end{theorem}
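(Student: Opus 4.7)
The plan is to reduce the statement to a tautology via careful unwinding of definitions, then invoke the uniqueness clause of the Herglotz--Bochner theorem. First, I would compute $\mathcal{N}^f(\one_0)$ explicitly: since $\one_0 \in C_{\text{c}}(\ZZ^m)$ is supported only at the origin, the defining sum $\mathcal{N}^f(\one_0) = \sum_{\vec{x} \in \ZZ^m} \one_0(\vec{x}) U_{\vec{x}} f$ collapses to $f$ itself. Substituting into the definition of the autocorrelation coefficients yields
\[
\eta_f(\vec{j}) = \big\langle U_{\vec{j}} f \mid f \big\rangle = \widehat{f}(\vec{j}),
\]
which are precisely the Fourier--Stieltjes coefficients appearing in the Herglotz--Bochner representation of the spectral measure $\sigma_f$.

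Next I would verify that $\widehat{\gamma_f}$ carries the same Fourier data. Since $\gamma_f = \sum_{\vec{j} \in \ZZ^m} \eta_f(\vec{j}) \delta_{\vec{j}}$ is supported on the lattice $\ZZ^m$, its distributional Fourier transform is $\ZZ^m$-periodic and therefore descends to a measure on $\mathbb{T}^m$ via the standard Weil-type quotient identification. Its $\vec{j}$-th Fourier coefficient is then exactly $\eta_f(\vec{j}) = \widehat{f}(\vec{j})$. Consequently, both $\sigma_f$ and (the quotient realization of) $\widehat{\gamma_f}$ are positive measures on $\mathbb{T}^m$ sharing identical Fourier--Stieltjes data, and the uniqueness part of the Herglotz--Bochner representation forces $\sigma_f = \widehat{\gamma_f}$.

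The main obstacle is ensuring that $\widehat{\gamma_f}$ is genuinely a positive Radon measure rather than merely a tempered distribution, since without positivity the uniqueness appeal is not licensed. This is where the positive-definiteness of the autocorrelation enters: for any $\phi \in C_{\text{c}}(\ZZ^m)$ one has
\[
\sum_{\vec{i},\vec{j}} \overline{\phi(\vec{i})}\, \phi(\vec{j})\, \eta_f(\vec{j} - \vec{i}) = \big\| \mathcal{N}^{f}(\phi) \big\|^{2}_{L^{2}(\seqsp,\mu)} \geqslant 0,
\]
so $\gamma_f$ is positive definite on $\ZZ^m$. Bochner's theorem for positive-definite measures then supplies $\widehat{\gamma_f}$ as an authentic positive measure, and the preceding argument closes. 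I would expect the book-keeping of Fourier-transform conventions --- in particular the passage between $\ZZ^m$-periodic measures on $\R^m$ and measures on $\mathbb{T}^m$ --- to be the most error-prone step, but no new ideas beyond those already in Section~\ref{sec:spectheory} are needed.
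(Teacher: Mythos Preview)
Your argument is correct. The paper does not supply its own proof of this statement: the theorem is imported from \cite{BLvE} and \cite{Lenz} and is marked with a \texttt{\textbackslash qed} immediately after the statement, with no argument given. What you have done is unwind the definitions that the paper has already set up --- in particular the identification $\mathcal{N}^f(\one_0)=f$ and hence $\eta_f(\vec{j})=\langle U_{\vec{j}}f\mid f\rangle=\widehat{f}(\vec{j})$ --- after which the result is indeed the uniqueness clause of Herglotz--Bochner. This is the intended reading: the paper's framing of $\eta_f$ via $\mathcal{N}^f$ is precisely designed so that the theorem becomes the tautology you describe, and your handling of the positivity issue via $\|\mathcal{N}^f(\phi)\|^2\geqslant 0$ is the standard way to justify that $\widehat{\gamma_f}$ is a genuine positive measure. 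The cited references prove a more general statement (arbitrary locally compact $\sigma$-compact abelian acting groups, and a broader notion of diffraction), but in the restricted $\ZZ^m$-setting of this paper your direct verification suffices and nothing more is needed.
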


In particular, when $f\in L^{2}(\seqsp,\mu)$ is a hyperlocal function, i.e., $f(\tiling)$ only depends on $\tiling(0)$, the spectral measure $\sigma_f$ is the diffraction 
of the weighted comb $\omega=\sum_{\vec{x}\in \Z^m} f(\tiling(\vec{x}))\delta_{x}$, for \emph{any} $\tiling\in\seqsp$ whenever $(\seqsp,\Z^m)$ is strictly ergodic, which always holds for primitive qubit substitutions; see Proposition~\ref{prop: strict ergod}.

\begin{remark}
Strictly speaking, the diffraction of a weighted Dirac comb supported on $\ZZ^m$ will be a measure on $\mathbb{R}^m$ which is $\ZZ^m$-periodic, i.e., $\widehat{\gamma}=\delta_{\ZZ^m}\ast \widehat{\gamma}^{ }_{\textnormal{FD}}$, where $\widehat{\gamma}^{ }_{\textnormal{FD}}$ is a measure on $\mathbb{T}^m$ and is called the \emph{fundamental diffraction} in \cite{BLvE}. By abuse of notation, and to be consistent with Theorem~\ref{thm:diffraction dynamical}, we will simply refer to $\widehat{\gamma}^{ }_{\textnormal{FD}}$ as the diffraction. \exend
\end{remark}

\begin{example}
Consider the Thue--Morse substitution $\varrho\colon \alet\mapsto \alet\blet, \blet\mapsto \blet\alet$ and choose $f$ to be $f=\one_{\alet}(\mathcal{T}(0))\in L^{2}(\seqsp,\mu)$. It follows from Theorem~\ref{thm:diffraction dynamical} that the spectral measure $\sigma_f$ is the diffraction of the weighted Dirac comb on any element on the Thue--Morse hull with weights $w_{\alet}=1$ and $w_{\blet}=0$. There is a closed form for the diffraction of such a Dirac comb for Thue--Morse, which is given by 
\[
\widehat{\gamma^{ }_\omega}=\left|\frac{w_{\alet}+w_{\blet}}{2}\right|^2\delta_0+\left|\frac{w_{\alet}-w_{\blet}}{2}\right|^2\widehat{\gamma^{ }_{\text{TM}}},
\]
where $\widehat{\gamma^{ }_{\text{TM}}}$ is the purely singular continuous Thue--Morse measure; see \cite{BG}. 
Substituting $w_{\alet}=1$ and $w_{\blet}=0$ yields $\sigma_f=\frac{1}{4}\delta_0+\frac{1}{4}\widehat{\gamma^{ }_{\text{TM}}}$; compare \cite{Queffelec2}. \exend
\end{example}

In the setting of qubit spin substitutions, we have the following correspondence. 

\begin{example}\label{ex:qubit diffraction dynamical}

Let $\sub$ be a spin substitution with spin group $G$ and digit set $\digits$. 
Consider the function $f=f^{\chi}_{\dlet}:=\chi(\spin(\mathcal{T}(0)))\one_{[\dlet]}$. Here one has $U_{\vec{x}}\mathcal{N}^{f}(\one_0)=
\chi(\spin(\mathcal{T}-\vec{x}))U_{\vec{x}}\one_{[\dlet]}$. Moreover, the diffraction $\widehat{\gamma^{ }_{f}}$ is the diffraction of the weighted Dirac comb $\omega_f=\sum_{\vec{x}\in\ZZ^m}\omega(\vec{x})\delta_{\vec{x}}$, where the weights are given by 
\[
\omega(\vec{x}):=\begin{cases}
\chi(\spin(\mathcal{T}-\vec{x})),& \text{if } \dig(\mathcal{T}-\vec{x})\in[\dlet],\\
0, & \text{otherwise}.  
\end{cases}
\]
From Theorem~\ref{thm:diffraction dynamical} and the discussion before it, one has $\sigma_{f^{\chi}_\dlet}=\widehat{\gamma^{ }_f}$.
\exend
\end{example}

When $\sub$ is $\chi$-unitary,  we recover the following implication for the diffraction measure $\widehat{\gamma}$ for a specific choice of weight functions. 

\begin{coro}\label{coro:diffraction ac}
Let $\varLambda$ be a colored point set arising from a qubit spin substitution.
Consider the weight functions $w^{\chi}:\varLambda\to \mathbb{C}$, which are defined via $w^{\chi}(\vec{x})=\chi(\spin(\mathcal{T}(\vec{x})))$, and the corresponding weighted Dirac comb $\omega^{\chi}=\sum_{\vec{x}\in\varLambda} w^{\chi}(\vec{x})\delta_{\vec{x}}$. Then, if $\mathcal{S}$ is $\chi$-unitary, the associated diffraction $\widehat{\gamma}$ is  Lebesgue measure $\mu^{ }_{\textnormal{L}}$ in $\mathbb{R}^m$. 
\end{coro}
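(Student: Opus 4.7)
The plan is to reduce the statement directly to Theorem~\ref{thm:main result}(b) by splitting the Dirac comb $\omega^{\chi}$ according to spin-free digits and exploiting both the value and the orthogonality of spectral measures of the functions $f^{\chi}_{\dlet}$. First, I would write $\omega^{\chi}=\sum_{\dlet\in\digits}\omega^{\chi}_{[\dlet]}$, where
\[
\omega^{\chi}_{[\dlet]}=\sum_{\vec{x}\in\varLambda_{[\dlet]}}\chi(\spin(\mathcal{T}(\vec{x})))\,\delta_{\vec{x}}
\]
is supported on those control points whose tile is of digit type $\dlet$. By Example~\ref{ex:qubit diffraction dynamical}, each summand $\omega^{\chi}_{[\dlet]}$ is precisely the weighted Dirac comb attached to $f^{\chi}_{\dlet}$ via Theorem~\ref{thm:diffraction dynamical}, so its diffraction equals the spectral measure $\sigma_{f^{\chi}_{\dlet}}$.

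Second, I would expand the autocorrelation bilinearly as $\gamma=\sum_{\dlet,\dlet'\in\digits}\gamma_{\dlet,\dlet'}$, where $\gamma_{\dlet,\dlet'}$ is the cross-autocorrelation of $\omega^{\chi}_{[\dlet]}$ and $\omega^{\chi}_{[\dlet']}$. By unique ergodicity (Proposition~\ref{prop: strict ergod}) and the same computation underlying Theorem~\ref{thm:diffraction dynamical}, its coefficients are
\[
\eta_{\dlet,\dlet'}(\vec{j})=\big\langle U_{\vec{j}}f^{\chi}_{\dlet}\mid f^{\chi}_{\dlet'}\big\rangle,
\]
which is exactly the quantity computed in Eq.~\eqref{eq: fourier cross correlation}. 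The key input is then the orthogonality already established in the proof of Theorem~\ref{thm:main result}(b): under $\chi$-unitarity, one has $Z(f^{\chi}_{\dlet})\perp Z(f^{\chi}_{\dlet'})$ for $\dlet\neq\dlet'$, so $\eta_{\dlet,\dlet'}(\vec{j})=0$ for all $\vec{j}\in\ZZ^{m}$ and all $\dlet\neq\dlet'$. Consequently $\gamma_{\dlet,\dlet'}=0$ whenever $\dlet\neq\dlet'$.

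Third, taking Fourier transforms and using linearity yields
\[
\widehat{\gamma}=\sum_{\dlet\in\digits}\widehat{\gamma_{\dlet,\dlet}}=\sum_{\dlet\in\digits}\sigma_{f^{\chi}_{\dlet}}=|\digits|\cdot\frac{1}{|\digits|}\mu^{ }_{\textnormal{L}}=\mu^{ }_{\textnormal{L}},
\]
where the second equality is Example~\ref{ex:qubit diffraction dynamical} and the third is Theorem~\ref{thm:main result}(b). The argument is essentially bookkeeping, since all the analytic work (vanishing of the diagonal Fourier coefficients for $\vec{j}\neq 0$ and of the off-diagonal inner products) was already carried out in Theorem~\ref{thm:main result}(b). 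The only point that requires a little care is the convention on fundamental versus $\ZZ^m$-periodic diffraction, so that the identification $\widehat{\gamma}=\mu^{ }_{\textnormal{L}}$ is made on $\mathbb{T}^m$, consistent with the remark following Theorem~\ref{thm:diffraction dynamical}; apart from this the proof is immediate.
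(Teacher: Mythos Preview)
Your proof is correct and follows essentially the same approach as the paper: the paper identifies $\widehat{\gamma}$ directly as the spectral measure of $F^{\chi}=\sum_{\dlet\in\digits} f^{\chi}_{\dlet}$ via Theorem~\ref{thm:diffraction dynamical}, then uses the orthogonality of the $f^{\chi}_{\dlet}$ from Theorem~\ref{thm:main result}(b) to write $\sigma_{F^{\chi}}=\sum_{\dlet}\sigma_{f^{\chi}_{\dlet}}=|\digits|\cdot\tfrac{1}{|\digits|}\mu^{ }_{\textnormal{L}}$. Your version unpacks the same idea at the level of autocorrelations and cross-autocorrelations, which amounts to the same computation; the only cosmetic difference is that the paper passes through the single function $F^{\chi}$ rather than splitting $\omega^{\chi}$ first.
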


\begin{proof}
Note that, from Theorem~\ref{thm:diffraction dynamical}, the diffraction $\widehat{\gamma}$ of $\omega^{\chi}$ is the spectral measure 
of the function $F^{\chi}=\sum_{\dlet\in \digits} f^{\chi}_{\dlet}$. From Theorem~\ref{thm:main result}(b),
the functions $f^{\chi}_{\dlet}$ define orthogonal cyclic subspaces, and hence one has $\sigma^{ }_{F^{\chi}}=\sum_{\dlet\in\digits} \sigma_{f^{\chi}_{\dlet}}$.
The claim follows since each of the summands is $\frac{1}{|\digits|}\mu^{ }_{\text{L}}$.
\end{proof}

Theorem~\ref{thm:diffraction dynamical} allows one to derive restrictions on the maximal spectral type of $f^{\chi}_{\dlet}$ by looking at the possible spectral types of $\widehat{\gamma^{ }_f}$, which we discuss in more detail in the next sections. 

\subsection{Substitutive factors and singular continuous spectrum}\label{sec:factors-sc}
Here, we describe how non-trivial characters $\chi\in \Gdual$ reveal factors of $(\seqsp,\Z^m)$ which are relevant in determining the spectral type of $H^{\chi}$
when $\sub$ is not $\chi$-unitary.

\begin{prop}\label{prop: factors}
Let $\sub$ be an aperiodic primitive spin qubit substitution in $\Z^m$, with spin matrix $W$ and spin group $G$. For each $\chi\in \Gdual$, there exists a substitutive factor $\sub^{(\chi)}$ over the alphabet ${\mA}^{(\chi)}=G/\textnormal{ker}(\chi)\times \digits$. Moreover, the spectral type of $H^{\chi}$ is absolutely continuous with respect to the maximal spectral type of $\sub^{(\chi)}$. 
\end{prop}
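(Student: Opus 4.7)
The plan is to realize $\sub^{(\chi)}$ as a literal quotient of $\sub$ by $\ker(\chi)$ and then show that $H^{\chi}$ lives, spectrally, inside the $L^2$-space of the factor subshift. Concretely, since $G$ is abelian, $K:=\ker(\chi)$ is a normal subgroup, and the quotient $G/K$ is a finite abelian group (isomorphic to $\chi(G)\subset S^1$). Define the new alphabet $\mA^{(\chi)}=G/K\times\digits$ and the map $\pi_{\chi}\colon\mA\to\mA^{(\chi)}$ by $\pi_{\chi}(g\dlet)=[g]\dlet$. Using property \RT{} for $\sub$, the relation $\sub(kh\alet,\vecd)=kh\,W(\dig(\alet),\vecd)\dlet^\prime$ shows that $\sub$ descends to a well-defined map $\sub^{(\chi)}\colon\mA^{(\chi)}\times\digits\to\mA^{(\chi)}$ obtained by composing $W$ with the quotient map $G\to G/K$. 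This $\sub^{(\chi)}$ is itself a qubit spin substitution over the same digit system $(Q,\digits)$ with spin group $G/K$ and spin matrix $\chi(W)$ (read in $G/K$), so it is primitive since $\sub$ is, and it defines a subshift $(\seqsp^{(\chi)},\Z^m)$. The coordinate-wise map $\Pi_{\chi}\colon\seqsp\to\mA^{(\chi)^{\Z^m}}$ given by $(\Pi_{\chi}\T)(\vec j)=\pi_{\chi}(\T(\vec j))$ commutes with the shift, is continuous, and carries $n$-supertiles of $\sub$ to $n$-supertiles of $\sub^{(\chi)}$, hence $\Pi_{\chi}(\seqsp)\subseteq\seqsp^{(\chi)}$; minimality of $\seqsp^{(\chi)}$ (from primitivity) then gives equality, so $\Pi_{\chi}$ is a topological factor map.

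Next I would exhibit the generators of $H^{\chi}$ as pullbacks. The key observation is that $\chi$ is trivial on $K$, so the function
\[
f^{\chi}_{\dlet}(\T)=\chi(\spin(\T(0)))\,\one^{ }_{[\dlet]}(\T)
\]
depends on $\T(0)$ only through its image in $\mA^{(\chi)}$: defining $\tilde f^{\chi}_{\dlet}\in L^2(\seqsp^{(\chi)},\mu^{(\chi)})$ by the same formula on $\seqsp^{(\chi)}$ (using the well-defined character $\bar\chi$ on $G/K$), one has $f^{\chi}_{\dlet}=\tilde f^{\chi}_{\dlet}\circ\Pi_{\chi}$. Since $\Pi_{\chi}$ intertwines the two shift actions, the Koopman operator $U_{\vec j}$ on $L^2(\seqsp,\mu)$ restricts on the pullback subspace $\Pi_{\chi}^{\ast}L^2(\seqsp^{(\chi)},\mu^{(\chi)})$ to a unitarily equivalent copy of the Koopman operator on the factor. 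In particular, the spectral measure of $f^{\chi}_{\dlet}$ equals the spectral measure of $\tilde f^{\chi}_{\dlet}$ on the factor, and therefore is absolutely continuous with respect to the maximal spectral type of $(\seqsp^{(\chi)},\Z^m,\mu^{(\chi)})$.

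To conclude, invoke Proposition~\ref{prop: L2 decomp} to write $H^{\chi}=Z(\langle f^{\chi}_{\dlet}\rangle)$. Since $Z(\langle f^{\chi}_{\dlet}\rangle)$ is contained in the pullback $\Pi_{\chi}^{\ast}L^2(\seqsp^{(\chi)},\mu^{(\chi)})$ (which is $U$-invariant), the maximal spectral type of $U|_{H^{\chi}}$ is dominated by that of $U|_{\Pi_{\chi}^{\ast}L^2(\seqsp^{(\chi)})}$, i.e.\ by the maximal spectral type of the factor $\sub^{(\chi)}$. Combined with the spectral purity result in Proposition~\ref{prop:purity-subshift}, this yields the absolute-continuity statement for the whole subspace.

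The only delicate point I expect is verifying that $\sub^{(\chi)}$ genuinely defines a spin substitution on $\mA^{(\chi)}$ in our sense, together with the claim $\Pi_{\chi}(\seqsp)=\seqsp^{(\chi)}$; everything else is essentially the standard factor-map fact that pullbacks preserve spectral measures. That verification is really a routine check that the two defining properties \RO{} and \RT{} are preserved under the quotient $G\to G/K$, together with surjectivity of $\Pi_{\chi}$ onto the minimal subshift generated by the descended substitution, both of which follow directly from the construction.
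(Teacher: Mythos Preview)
Your proposal is correct and follows essentially the same route as the paper: quotient the alphabet by $\ker(\chi)$, observe via \RT{} that $\sub$ descends to a spin substitution $\sub^{(\chi)}$ on $G/\ker(\chi)\times\digits$ with spin matrix $\chi(W)$, and then realize the generators $f^{\chi}_{\dlet}$ as pullbacks from the factor subshift. The paper's proof is terser on the spectral step---it only exhibits the single function $\sum_{\dlet}f^{\chi}_{\dlet}$ as living on the factor and tacitly leans on spectral purity---whereas you show directly that every generator $f^{\chi}_{\dlet}$ is a pullback, so $H^{\chi}\subseteq\Pi_{\chi}^{\ast}L^{2}(\seqsp^{(\chi)},\mu^{(\chi)})$ without needing Proposition~\ref{prop:purity-subshift}; your final invocation of purity is therefore redundant but harmless.
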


\begin{proof}
Let $g\dlet, g^{\prime}\dlet\in \mA$. If $g$ and $g^{\prime}$ belong to the same coset of $\text{ker}{(\chi)}$, the corresponding supertiles satisfy 
\[
\chi(\sub(g\dlet))=\chi(\sub(g^{\prime}\dlet))
\]
by Property \RT. This gives rise to an equivalence relation in $\mA$ 
given by $g\dlet\sim g^{\prime}\dlet \iff g\text{ker}(\chi)=g^{\prime}\text{ker}(\chi)$. 
 Using this relation, one can now define the factor qubit substitution $\sub^{(\chi)}$ via the matrix $W^{\prime}=\chi(W)$. Since $G/\textnormal{ker}(\chi)$ is cyclic \cite{Meshulam}, the entries of $W^{\prime}$
are now roots of unity which generate the cyclic group $G/\text{ker}(\chi)$.
The factor map is given by $\pi\colon \seqsp\to \seqsp^{(\chi)}$ with $\pi(\tiling(\vecj))=\chi(\spin(\tiling(\vecj))\dig(\tiling(\vecj))$
for $\vecj\in\Z^m$.  
The last claim follows since we have the equivalence 
$\chi(\spin(\tiling(0)))=\spinfac(\tiling^{\prime}(0))$, for some $\tiling^{\prime}$ in the factor subshift generated by $\sub^{(\chi)}$, and hence the spectral measure of $\sum_{\dlet \in\digits}f^{\chi}_{\dlet}\in H^{\chi}$ is the spectral measure of a function defined over the factor subshift $\seqsp^{(\chi)}$ which forgets the digits and retains the spin component in $G/\text{ker}(\chi)$. This specific spectral measure, by the discussion in Example~\ref{ex:qubit diffraction dynamical}, is recoverable from a diffraction measure of the factor substitution $\sub^{(\chi)}$. 
\end{proof}

The following is a consequence of Proposition~\ref{prop: factors} and Proposition~\ref{prop:purity-subshift}. 

\begin{coro}\label{coro:pp-sc}
If the factor substitution $\sub^{(\chi)}$ has singular spectrum, then either $H^{\chi}$ is pure point or purely singular continuous. \qed
\end{coro}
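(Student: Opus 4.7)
The plan is to deduce the dichotomy directly by combining the two results cited in the statement, with essentially no extra computation required. The first ingredient is Proposition~\ref{prop: factors}, which says that every spectral measure in $H^{\chi}$ is absolutely continuous with respect to the maximal spectral type $\sigma_{\max}^{(\chi)}$ of the factor subshift $\sub^{(\chi)}$. The second ingredient is Proposition~\ref{prop:purity-subshift}, which guarantees that $H^{\chi}$ is spectrally pure, so globally it falls into exactly one of the three Lebesgue classes (pure point, purely absolutely continuous, or purely singular continuous) with respect to Lebesgue measure $\leb$ on $\mathbb{T}^m$.

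First I would argue that the purely absolutely continuous case is excluded. Suppose for contradiction that the maximal spectral type of $H^{\chi}$ has a nontrivial absolutely continuous (w.r.t.\ $\leb$) component. Then there exists $f\in H^{\chi}$ with $\sigma_f \ll \leb$ and $\sigma_f \not\equiv 0$. But Proposition~\ref{prop: factors} forces $\sigma_f \ll \sigma_{\max}^{(\chi)}$, and by hypothesis $\sigma_{\max}^{(\chi)} \perp \leb$. Chaining absolute continuity gives $\sigma_f \ll \sigma_{\max}^{(\chi)} \perp \leb$, so $\sigma_f \perp \leb$, contradicting $\sigma_f \ll \leb$. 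Hence $H^{\chi}$ cannot have a nontrivial absolutely continuous component, and in particular it cannot be purely absolutely continuous.

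Applying Proposition~\ref{prop:purity-subshift} now leaves only the two remaining spectral types: $H^{\chi}$ is either pure point or purely singular continuous, which is the claim. There is no real obstacle here; the only subtle point worth a sentence in the write-up is to be explicit that ``singular spectrum'' for $\sub^{(\chi)}$ means $\sigma_{\max}^{(\chi)}\perp \leb$ (i.e., no absolutely continuous part with respect to Lebesgue), so that the absolute continuity relation $\sigma_f \ll \sigma_{\max}^{(\chi)}$ supplied by Proposition~\ref{prop: factors} transfers singularity from the factor to the subspace $H^{\chi}$.
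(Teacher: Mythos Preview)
Your proposal is correct and follows exactly the route the paper indicates: the corollary is stated with a \qed immediately after noting it follows from Proposition~\ref{prop: factors} and Proposition~\ref{prop:purity-subshift}, and your write-up simply spells out that two-line deduction (absolute continuity with respect to a singular maximal spectral type rules out the Lebesgue case, and spectral purity then forces the dichotomy). There is nothing to add or correct.
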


We note that these factor substitutions exist for all qubit spin substitutions. When the spin matrix $W$ has more structure, it is possible
that letters in $\mA$ with \emph{different} digits get identified as well and hence $\sub^{(\chi)}$ becomes a substitution on an alphabet smaller than $G/\text{ker}(\chi)\times\digits$. In these cases, they may no longer be spin substitutions but will remain qubit substitutions supported on the same digit set.

We now show that, when $\chi(W)$ is a rank-1, one obtains a much simpler factor substitution. We recall that a {\bf\emph{bijective qubit substitution}} is one where the map $\sub_{\vec{d}}\colon \mA\to \mA$
is a bijection for all $\vec{d}\in\digits$.

\begin{prop}\label{prop:rank1}
Let $\sub$ be as above. If $\chi(W)$ is a rank-1 matrix, the associated factor substitution $\sub^{(\chi)}$ is a bijective qubit substitution
over the alphabet $G/\textnormal{ker}(\chi)$. 
\end{prop}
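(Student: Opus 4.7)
The plan is to exploit the rank-$1$ hypothesis on $\chi(W)$ to factor the spin data entrywise as a product $\chi(W_{ij}) = u_i v_j$, and then push the substitution $\sub^{(\chi)}$ from Proposition~\ref{prop: factors} further down onto an alphabet indexed only by cosets of $\textnormal{ker}(\chi)$, on which each digit slot acts by multiplication by a constant.

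First, I would set up the algebra. Let $n = |G/\textnormal{ker}(\chi)|$, so that $\chi$ identifies $G/\textnormal{ker}(\chi)$ with the cyclic group $C_n \subset S^1$, and all entries of $\chi(W)$ are $n$-th roots of unity. Rank-$1$ means every $2 \times 2$ minor of $\chi(W)$ vanishes, which after normalizing $u_1 = 1$ gives the identity $\chi(W(\vec{d}_i,\vec{d}_j)) = u_i v_j$ with $u_i = \chi(W(\vec{d}_i,\vec{d}_1))/\chi(W(\vec{d}_1,\vec{d}_1))$ and $v_j = \chi(W(\vec{d}_1,\vec{d}_j))$. Since ratios of $n$-th roots of unity are again $n$-th roots of unity, each $u_i, v_j$ lies in $C_n$, so both may be regarded as elements of $G/\textnormal{ker}(\chi)$.

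Next, I would define a further letter-to-letter map $\pi^{(\chi)} \colon \mA^{(\chi)} \to G/\textnormal{ker}(\chi)$ by $\pi^{(\chi)}(\eta\,\dlet_i) = \eta\,u_i$, and check that it intertwines $\sub^{(\chi)}$ with the candidate qubit substitution $\tilde{\sub}$ on the alphabet $G/\textnormal{ker}(\chi)$ defined, for each digit $\vec{d}_j \in \digits$, by
\[
\tilde{\sub}(\eta, \vec{d}_j) = \eta \cdot u_j v_j.
\]
Applying $\pi^{(\chi)}$ to $\sub^{(\chi)}(\eta\,\dlet_i,\vec{d}_j) = \eta\, u_i v_j\, \dlet_j$ yields $\eta u_i v_j u_j$, which equals $\pi^{(\chi)}(\eta\,\dlet_i) \cdot u_j v_j$ and hence depends on the input only through its $\pi^{(\chi)}$-image. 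This confirms that $\tilde{\sub}$ is a well-defined qubit substitution on $G/\textnormal{ker}(\chi)$ over the same digit system $(Q,\digits)$, and that $\pi^{(\chi)}$ induces a factor map of the corresponding subshifts; by a mild abuse of notation, this $\tilde{\sub}$ is what the statement calls $\sub^{(\chi)}$.

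Bijectivity is then automatic: for each $\vec{d}_j$, the map $\tilde{\sub}_{\vec{d}_j}\colon G/\textnormal{ker}(\chi) \to G/\textnormal{ker}(\chi)$ is left multiplication by the fixed element $u_j v_j$, which is a bijection of a finite group. The only genuinely delicate point is the first step: one must ensure that the scalars from the rank-$1$ decomposition can be chosen inside the cyclic group $C_n$ rather than merely in $\C^\times$. This is precisely where the closure of $n$-th roots of unity under ratios is essential; without this group-theoretic rigidity the factorization $\chi(W_{ij}) = u_i v_j$ would live over $\C$ and give no substitution on $G/\textnormal{ker}(\chi)$.
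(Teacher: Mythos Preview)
Your proof is correct and follows essentially the same route as the paper: factor the rank-$1$ matrix as $\chi(W_{ij})=u_iv_j$ with $u_i,v_j$ in the cyclic group $G/\ker(\chi)$, then collapse the digit coordinate so that each column of the resulting substitution is a fixed left multiplication on $G/\ker(\chi)$. The paper phrases the collapse via an equivalence relation $\dlet_i\sim g\dlet_j\iff\chi(W_i)=\chi(g)\chi(W_j)$ on $\mA$, whereas you build the explicit letter-to-letter map $\pi^{(\chi)}(\eta\,\dlet_i)=\eta\,u_i$ on $\mA^{(\chi)}$ and verify intertwining; these are two descriptions of the same quotient, and your version has the virtue of making the factor map and the bijectivity check completely transparent.
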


\begin{remark}
The reader may wish to gain intuition by considering, for instance, the Vierdrachen example in Example~\ref{ex:vierdrachen} with character $\chi_2\in \widehat{C_2\times C_2}$. The proof below makes precise the identifications that one sees in such examples. \exend
\end{remark}

\begin{proof}
Since $\chi(W)$ is rank-$1$, there exist vectors $v,w\in (S^{1})^{|\digits|}$ which satisfy $\chi(W)=v\otimes w$. This induces another equivalence relation on the alphabet as follows. Let $\dlet_i,\dlet_j$ be spin-free letters in $\mA$ which correspond to the rows $W_i$ and $W_j$ of the spin matrix. Then, one has $\dlet_i\sim g\dlet_j \iff \chi(W_i)=\chi(g)\chi(W_j)$. This relation induces a partition of the alphabet satisfying the following properties.
\begin{enumerate}
\item The set of equivalence classes is indexed by $G/\text{ker}(\chi)$. More explicitly, we have \[
\mathcal{A}=\bigcup_{g\in G/\text{ker}(\chi)}\,[g\dlet_0],\]
where each equivalence class corresponds to a coset of $\text{ker}(\chi)$. 
\item Because $\chi(W)$ is rank-1, each equivalence class contains at least one element of $[\dlet]$ for each $\dlet\in\digits$. 
\item It is possible for two letters with the same spin to be in the same equivalence class (this happens when $\chi(W_i)=\chi(W_j)$).   
\end{enumerate}
The substitution $\sub^{(\chi)}$ induced by $\chi$ on the equivalence classes is given by 
\begin{equation}\label{eq:subs induced}
\sub^{(\chi)}\colon [\dlet_0]\mapsto [g_0\dlet_0][g_1\dlet_1]\cdots[g_{\ell-1}\dlet_{\ell-1}]
\end{equation}
where $\ell=|\digits|$. From the second statement, we know that each equivalence class $[g_{j}\dlet_{j}]$ contains at least one element whose spin-free projection is $\dlet_0$, i.e., $[g_{j}\dlet_{j}]=[g^{\prime}_{j}\dlet_{0}]$ for some $g^{\prime}_{j}$. Using $[g\dlet_j]=g[\dlet_j]$, Eq.~\eqref{eq:subs induced} then simplifies to
\[\sub^{(\chi)}\colon [\dlet_0]\mapsto g^{ }_0[\dlet_0]\,g^{\prime}_1[\dlet_0]\cdots g^{\prime}_{\ell-1}[\dlet_{0}],\] which one can then write as a substitution on the alphabet $G/\text{ker}(\chi)$ as
\[
\sub^{(\chi)}\colon g\mapsto L_{g_0}(g) L_{g^{\prime}_1}(g)\cdots  L_{g^{\prime}_{\ell-1}}(g)
\]
for $g\in G/\text{ker}(\chi)$, where $L_g$ is left multiplication by $g$.
Our alphabet now is $G/\text{ker}(\chi)$ and
one can easily check that $\sub^{(\chi)}$ is a bijective substitution whose $1$-tiles are still supported on $\digits$. 
\end{proof}

\begin{proof}[Proof of Theorem~\textnormal{\ref{thm:main result}(c)}]
Since $G/\text{ker}(\chi)$ is cyclic \cite{Meshulam}, the factor substitution $\sub^{(\chi)}$ is abelian. Moreover, $\sub^{(\chi)}$ is primitive, because $\sub$ is primitive, and hence all equivalence classes $[g\dlet_0]$ appear in $\big(\sub^{(\chi)}\big)^n$ for $n $ large enough. 
It then follows from several singularity results for bijective abelian substitutions \cite{Bartlett, BGM} that $\sub^{(\chi)}$ can only give rise to singular diffraction for arbitrary weights on the alphabet, and hence singular spectral measures for functions in $H^{\chi}$. It is also well known that the measures which generate the maximal spectral type of $\sub^{(\chi)}$ are Riesz products, from which the second claim follows. The last claim follows from a higher-dimensional generalization of Dekking's result, i.e., an aperiodic bijective substitution on $\ZZ^{m}$ must have a non-trivial continuous component (which in this case is singular continuous); compare \cite{Frank3}. 
\end{proof}

\begin{remark}
In the setting of Corollary~\ref{coro:pp-sc}, either $H^{\chi}$ is singular continuous or pure point. Both cases are possible; see Examples~\ref{ex:vierdrachen} and \ref{ex:pp-factor}. \exend
\end{remark}

If $\chi(W)$ is not of rank 1, one can still analyze the spectral type of $\sub^{(\chi)}$ via several singularity criteria. 
For certain subclasses, one can apply the following sufficient criterion for singularity due to Berlinkov and Solomyak.

\begin{theorem}[{\cite[Thm.~1.1]{BS}}]\label{thm: Berlinkov-Solomyak}
Let $\varrho$ be a one-dimensional substitution of constant-length $L$. Then, if  the substitution matrix $M$ of $\varrho$ does not have an eigenvalue of modulus $\sqrt{L}$, the dynamical spectrum of $\varrho$ is purely singular.  \qed
\end{theorem}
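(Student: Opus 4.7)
The most natural strategy is the renormalization approach for the diffraction of a constant-length substitution, which is the same machinery that underlies Section~\ref{sec:lyapunov} in this paper. Fix an alphabet $\mathcal{A} = \{a_1,\ldots,a_r\}$ and let $M$ be the substitution matrix of $\varrho$ with Perron eigenvalue $L$. For arbitrary complex weights on $\mathcal{A}$, the vector of pair correlations $\boldsymbol{\gamma} = (\gamma_{ij})$ of the associated weighted Dirac comb satisfies a closed matrix renormalization whose Fourier form is
\[
\widehat{\boldsymbol{\gamma}}(\xi) \;=\; \frac{1}{L}\,B(\xi)\,\widehat{\boldsymbol{\gamma}}(L\xi)\,B(\xi)^{*},
\]
where $B(\xi)$ is the Fourier matrix of $\varrho$ (trigonometric polynomials of degree $<L$ with $B(0) = M^{T}$). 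Iterating gives the matrix Riesz-product representation
\[
\widehat{\boldsymbol{\gamma}}(\xi) \;=\; L^{-n}\, B^{(n)}(\xi)\,\widehat{\boldsymbol{\gamma}}(L^{n}\xi)\,B^{(n)}(\xi)^{*}, \qquad B^{(n)}(\xi) := B(\xi)B(L\xi)\cdots B(L^{n-1}\xi),
\]
which I would take as the starting point.

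Next, I would invoke Oseledets' theorem for the matrix cocycle $B^{(n)}$ over the expanding map $\xi \mapsto L\xi \pmod 1$ equipped with Lebesgue measure, yielding Lyapunov exponents $\chi_1 \geqslant \chi_2 \geqslant \cdots$. The top exponent is $\log L$; it comes from the PF direction of $M = B(0)^{T}$ and is absorbed into the pure-point part of the spectrum. The central link to the hypothesis is the estimate $\chi_2 \leqslant \log |\lambda_2(M)|$ (with analogous bounds for the remaining exponents), which follows from a submultiplicativity argument together with the fact that $B(0) = M^{T}$ and $\|B(\xi)\|$ is uniformly bounded in $\xi$. Absence of eigenvalues of $M$ of modulus $\sqrt{L}$ therefore forces every non-trivial Lyapunov exponent to be strictly different from $\tfrac{1}{2}\log L$.

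The remaining task is to translate this Lyapunov gap into the vanishing of the absolutely continuous part of $\widehat{\boldsymbol{\gamma}}$. The key identity is Parseval applied to the putative $L^{2}$ density of the a.c.~part: on dyadic blocks of length $L^{-n}$, a non-vanishing a.c. component forces $\|B^{(n)}(\xi)\|$ to grow precisely like $L^{n/2}$ along at least one transverse direction for a set of $\xi$ of positive Lebesgue measure, which contradicts the bounds just derived. Hence each $\gamma_{ij}$ is purely singular; by Theorem~\ref{thm:diffraction dynamical} applied to the hyperlocal indicator functions that generate $L^{2}(\seqsp,\mu)$, the dynamical spectrum of $\varrho$ contains no absolutely continuous component.

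The main obstacle is the last step, passing from an almost-everywhere Lyapunov bound to the vanishing of the a.c.~component as a measure. Oseledets provides exponent control only off a Lebesgue-null set of $\xi$, yet the a.c.~part of $\widehat{\boldsymbol{\gamma}}$ is determined precisely off such null sets, so one has to rule out subtle cancellations and potentially atypical behaviour on the countable set of $L$-adic rationals. Berlinkov and Solomyak resolve this by passing to the reduced cocycle on the invariant complement of the PF direction and using an upper semicontinuity argument for its top exponent together with a quantitative block-by-block $L^{2}$ estimate; reproducing this uniform spectral-radius bound is the delicate analytic core of the proof.
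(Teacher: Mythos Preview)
The paper does not give its own proof of this statement; it is quoted from \cite{BS} with a \qed and used as a black box. So the comparison is with the argument of Berlinkov--Solomyak, not with anything in the present paper.

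Your overall set-up is right: the matrix Riesz-product renormalization for the correlation measures and the Fourier cocycle $B^{(n)}(\xi)$ are exactly the objects Berlinkov and Solomyak work with. The genuine gap is the step you flag as ``the central link to the hypothesis'', namely the inequality
\[
\chi_2 \;\leqslant\; \log|\lambda_2(M)|.
\]
This bound is \emph{false} in general and cannot be obtained from submultiplicativity together with $B(0)=M^{T}$ and uniform boundedness of $\|B(\xi)\|$. Thue--Morse already disproves it: there $L=2$ and $M=\bigl(\begin{smallmatrix}1&1\\1&1\end{smallmatrix}\bigr)$ has eigenvalues $2$ and $0$, so $\log|\lambda_2(M)|=-\infty$; yet $\det B(\xi)=1-e^{4\pi i\xi}$ gives $\chi_1+\chi_2=\int_0^1\log|1-e^{4\pi i\xi}|\,d\xi=0$, whence $\chi_2=-\log 2>-\infty$. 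There is simply no monotone relation between the Lyapunov spectrum of the cocycle and the eigenvalues of the single matrix $M=B(0)$. Since this inequality is the only place where your outline uses the hypothesis on $M$, the argument does not connect the assumption to the conclusion.

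The route that actually works avoids pointwise Lyapunov exponents altogether and extracts the eigenvalue condition \emph{algebraically}. Restricting the renormalization identity to the absolutely continuous part yields a functional equation for the matrix-valued density $\Phi(\xi)\geqslant 0$; integrating it over $\mathbb{T}$ and using the orthogonality of the exponentials $e^{2\pi i j\xi}$, $0\leqslant j<L$, turns the question into a \emph{finite-dimensional} eigenvector problem: the total-mass matrix $\int_0^1\Phi$ must lie in the eigenspace to the eigenvalue $L$ of a linear operator on $r\times r$ matrices built from the column structure of $\varrho$. The spectrum of that operator is computable directly from $M$, and a nonzero positive semidefinite solution off the Perron direction exists only if $M$ has an eigenvalue of modulus $\sqrt{L}$. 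That is how the hypothesis enters in \cite{BS}; it is an exact algebraic obstruction, not an asymptotic exponent estimate.
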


In other cases one can prove singularity results for the factor substitution $\sub^{(\chi)}$ using Lyapunov exponents, which we discuss next.

\subsection{Analysis via Lyapunov exponents}
\label{sec:lyapunov}

An important quantity that reveals the scaling behavior and spectral type of diffraction measures of substitution dynamical systems is the Lyapunov exponent of the associated matrix cocycle; see \cite{BGM,BuS}. Below, we present the properties of the matrix cocycle associated with qubit spin substitutions.

Let $\mathcal{S}$ be a qubit substitution with digit set $\mathcal{D}$, spin group $G$ and map $W$. 
Let $\alet_i=g\dlet$ and  $\alet_j=g^{\prime}\dlet^{\prime}$, with $g,g^{\prime}\in G$ and $\dlet,\dlet^{\prime}\in \mathcal{D}$.
One has the decomposition $\mathcal{D}=\bigcup T_{ij}$, where the sets $T_{ij}$ consists of the positions of tiles of type $\alet^{ }_i$ in $\mathcal{S}(\alet_{j})$. Note that every level-1 supertile contains at most one of any of the variations of each basic tile type, which means $\text{card}(T_{ij})\in\big\{0,1\big\}$, for all $1\leqslant i,j\leqslant |\digits||G|$. One can build the matrix $T=(T_{ij})$, which we call the  \emph{displacement matrix}.  
The displacement matrix $T$ and the expansive map $Q$ completely determine the hierarchical structure of $\mathcal{S}$, i.e., the structure of the level-$n$ supertiles. 
In general, one has 
\[
\mathcal{S}^n(\alet_{j})=\bigcup_{m=1}^{|\digits||G|} \left(QT^{(n-1)}_{mj}+\mathcal{S}(\alet_m)\right),
\]
where $T^{(1)}=T$ and $T^{(n)}$ is the displacement matrix for the power $\mathcal{S}^{n}$. Note that the matrix of sets $T^{(n)}$ is related to $\mathcal{D}^{(n)}$ via $\mathcal{D}^{(n)}=\bigcup_{ij} T^{(n)}_{ij}$.

 From this combinatorial information, one can build another matrix $B(\vec{k})$, which is called the Fourier matrix, via $B(\vec{k})_{ij}:=\sum_{\vec{x}\in T_{ij}} \ee^{2\pi\ii \left\langle \vec{k}\mid\vec{x}\right\rangle}$. Equipped with the base dynamics $ \vec{k}\mapsto Q^{\intercal}\vec{k}$, this matrix-valued function becomes a cocycle $
B^{(n)}(\vec{k})=B(\vec{k})B(Q^{\intercal} \vec{k})\cdots B((Q^{\intercal})^{n-1}\vec{k})
$. 
The {\bf \em (maximal) Lyapunov exponent} of this cocycle is given by 
\[
\lambda^{B}(\vec{k}):=\limsup_{n\rightarrow\infty}\log\|B^{(n)}(\vec{k})\|
\] 
for $\vec{k}\in \mathbb{R}^m.$

\begin{example}[Triomino substitution]
Recall the Triomino substitution from Example~\ref{ex:triomino}. 
Fix an ordering of $C_3$ as $\left\{1,\omega,\omega^2\right\}$.
From the $C_3$-invariance of $\mathcal{S}$, one can directly verify that the displacement matrix is given by
$T=\phi(1)\otimes T_1 + \phi(\omega)\otimes T_{\omega} +\phi(\omega^2)\otimes T_{\omega^2}$ with 
\[
T_1=\begin{pmatrix}
0 & 0 & 0 \\
e^{ }_1 & \varnothing & \varnothing\\
e^{ }_2 & \varnothing & \varnothing
\end{pmatrix}\quad \quad T_{\omega}=\begin{pmatrix}
\varnothing & \varnothing & \varnothing \\
\varnothing & \varnothing & e^{ }_1\\
\varnothing & e^{ }_2 & \varnothing
\end{pmatrix} \quad \quad
T_{\omega^2}=\begin{pmatrix}
\varnothing & \varnothing & \varnothing \\
\varnothing & e^{ }_1 & \varnothing \\
\varnothing & \varnothing  & e^{ }_2
\end{pmatrix}
\]
and $\phi\colon C_3\to \text{GL}(3,\ZZ)$ is the representation via permutation matrices.
It then follows that, for this substitution, the Fourier matrix is given by
\begin{equation}\label{eq: Fourier matrix - TripleRS}
B(\vec{k})=\phi(e)\otimes Z_e + \phi(\sigma)\otimes Z_g +\phi(\sigma^2)\otimes Z_{g^2} \quad\quad \text{with}
\end{equation}
\[
Z_e=\begin{pmatrix}
1 & 1 & 1 \\
x & 0 & 0\\
y & 0 & 0
\end{pmatrix}\quad \quad Z_g=\begin{pmatrix}
0 & 0 & 0 \\
0 & 0 & x\\
0 & y & 0
\end{pmatrix} \quad \quad
Z_{g^2}=\begin{pmatrix}
0 & 0 & 0 \\
0 & x & 0 \\
0 & 0  & y
\end{pmatrix},
\]
where $x=\ee^{2\pi\ii \left\langle \vec{k}\mid e_1\right\rangle}$ and $y=\ee^{2\pi\ii \left\langle \vec{k}\mid e_2\right\rangle}$ with $\vec{k}\in\mathbb{R}^2$.   \exend
\end{example}

The following result relates $\lambda^{B}(\vec{k})$ to the presence of absolutely continuous diffraction.

\begin{theorem}[\cite{BGM}]\label{thm: presence of ac}
Let $\mathcal{S}$ be a primitive stone inflation in $\mathbb{R}^m$, with expansive map $Q$. Suppose $\det(B(\vec{k}))\not\equiv 0$. 
If the diffraction $\widehat{\gamma}$ has a non-trivial absolutely continuous component, the Lyapunov exponent $\lambda^{B}(\vec{k})$ associated to the Fourier cocycle generated by $B(\vec{k})$ with base dynamics $Q^{\intercal}$ must satisfy 
\[
\lambda^{B}(\vec{k})=\log\sqrt{|\det Q|},
\]
for a set of $\vec{k}\in\mathbb{R}^m$ of positive Lebesgue measure. \qed 
\end{theorem}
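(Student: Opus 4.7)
The plan is to extract the Lyapunov identity from a matrix-valued renormalization equation for the diffraction, restricted to its absolutely continuous component, via an ergodic argument on the base map $T\colon\vec{k}\mapsto Q^{\intercal}\vec{k}$. Writing $\Upsilon(\vec{k})=(\widehat{\gamma_{ij}}(\vec{k}))_{ij}$ for the matrix of pair correlations between tile types, the stone inflation structure yields the identity
\[
\Upsilon(\vec{k}) \;=\; \frac{1}{|\det Q|}\, B(\vec{k})\, \Upsilon(Q^{\intercal}\vec{k})\, B(\vec{k})^{*},
\]
valid as an equality of positive semi-definite matrix-valued measures on $\R^m$. The hypothesis $\det B(\vec{k})\not\equiv 0$ ensures that the cocycle is generically invertible and so faithfully tracks the spectral content of $\Upsilon$.

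First I would verify that the Lebesgue decomposition $\Upsilon=\Upsilon_{\mathsf{pp}}+\Upsilon_{\mathsf{ac}}+\Upsilon_{\mathsf{sc}}$ is respected by the renormalization: since $Q^{\intercal}$ has constant non-zero Jacobian $|\det Q|$, its push-forward preserves all three spectral types and they do not mix under $B$. Passing to the Radon--Nikodym density $\rho=\dd\Upsilon_{\mathsf{ac}}/\dd\leb$, which is a positive semi-definite Hermitian matrix $\leb$-a.e., the renormalization localizes to the pointwise equation $\rho(\vec{k})=|\det Q|^{-1}B(\vec{k})\,\rho(T\vec{k})\,B(\vec{k})^{*}$ $\leb$-a.e., and $n$-fold iteration produces
\[
\rho(\vec{k}) \;=\; \frac{1}{|\det Q|^{n}}\, B^{(n)}(\vec{k})\, \rho(T^{n}\vec{k})\, B^{(n)}(\vec{k})^{*}.
\]
Non-triviality of $\widehat{\gamma}_{\mathsf{ac}}$ gives a set $E$ of positive Lebesgue measure on which $\rho(\vec{k})\neq 0$.

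On $E$, the trace bound $\operatorname{tr}(A B^{*}B)\leq\|B\|^{2}\operatorname{tr}(A)$ applied to the iterated identity gives $\operatorname{tr}\rho(\vec{k})\leq|\det Q|^{-n}\|B^{(n)}(\vec{k})\|^{2}\operatorname{tr}\rho(T^{n}\vec{k})$, so taking $\tfrac{1}{n}\log$ and letting $n\to\infty$ forces $\lambda^{B}(\vec{k})\geq\log\sqrt{|\det Q|}$ wherever the forward orbit $(T^{n}\vec{k})_{n}$ does not escape into regions where $\operatorname{tr}\rho$ decays super-exponentially. By Lusin's theorem applied to $\log\operatorname{tr}\rho\in L^{1}_{\textnormal{loc}}$, combined with the Birkhoff ergodic theorem for the expanding toral endomorphism induced by $T$ with respect to Haar measure, such escape is excluded on a positive-measure subset of $E$. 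The matching upper bound $\lambda^{B}(\vec{k})\leq\log\sqrt{|\det Q|}$ on this subset comes from the dual trace inequality $\operatorname{tr}(AB^{*}B)\geq\sigma_{\min}(B)^{2}\operatorname{tr}(A)$ applied on the support of $\rho$, using that $\det B(\vec{k})\not\equiv 0$ propagates invertibility of $\rho$ along the orbit, together with the Oseledets multiplicative ergodic theorem to identify the singular value growth of $B^{(n)}$ with its top Lyapunov exponent.

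The principal obstacle is the ergodic step: one must show that for a positive-measure set of $\vec{k}\in E$, the orbit $T^{n}\vec{k}$ returns infinitely often to a compact region where $\operatorname{tr}\rho$ is bounded both above and below. This is where ergodicity of the expanding toral endomorphism, together with an Oseledets-style regularization used to upgrade the $\limsup$ defining $\lambda^{B}(\vec{k})$ to an honest limit $\leb$-a.e., plays the decisive role. The assumption $\det B(\vec{k})\not\equiv 0$ is essential precisely here, since otherwise the cocycle could develop singular directions on the support of $\rho$ along which the trace heuristic collapses.
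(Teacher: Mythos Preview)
The paper does not prove this statement: it is quoted from \cite{BGM} and closed with \qed. So there is no in-paper proof to compare against, only the original reference. That said, your outline follows the strategy of \cite{BGM}: the renormalization identity for the matrix of pair-correlation measures, its restriction to the absolutely continuous part, and the resulting pointwise iteration for the Radon--Nikodym density $\rho$ are exactly the ingredients used there, and your lower bound $\lambda^{B}(\vec{k})\geq\log\sqrt{|\det Q|}$ via the trace inequality together with recurrence of the toral endomorphism is correct in spirit.

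The upper bound, however, has a genuine gap. The inequality $\operatorname{tr}(B\rho B^{*})\geq\sigma_{\min}(B)^{2}\operatorname{tr}(\rho)$ only controls the growth of the \emph{smallest} singular value of $B^{(n)}$ along the range of $\rho$; it says nothing about the top singular value, which is what governs $\lambda^{B}$. Invoking Oseledets does not fix this: Oseledets precisely separates the different growth rates, so the top exponent can exceed $\log\sqrt{|\det Q|}$ while the bottom one on $\operatorname{ran}\rho$ stays below it. What the iterated identity actually gives you, by writing $\rho(T^{n}\vec{k})=\sum_{\ell}v_{\ell}v_{\ell}^{*}$ and bounding each $\|B^{(n)}v_{\ell}\|^{2}\leq|\det Q|^{n}\operatorname{tr}\rho(\vec{k})$, is only that the exponent \emph{in the directions spanned by $\rho$} is at most $\log\sqrt{|\det Q|}$; this equals $\lambda^{B}$ only if $\rho$ has full rank, which you have not established.

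In \cite{BGM} the matching upper bound is obtained differently: one shows \emph{a priori} that $\lambda^{B}(\vec{k})\leq\log\sqrt{|\det Q|}$ for a.e.\ $\vec{k}$, independently of any hypothesis on $\widehat{\gamma}$, by an averaging (Jensen-type) estimate on $\|B^{(n)}\|_{F}^{2}$ using that each column of $B(\vec{k})$ consists of $|\det Q|$ unimodular phases distributed among the rows. The theorem then follows by combining this universal upper bound with the lower bound you derived from the renormalization of $\rho$. Your sketch would be repaired by inserting this separate argument in place of the $\sigma_{\min}$ step.
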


We have the following complementary condition which proves the singularity of the diffraction; see \cite{BuS} for the analogue for spectral measures. 

\begin{theorem}[\cite{BGM}]\label{thm: absence of ac}
Let $\mathcal{S}$ be a primitive stone inflation in $\mathbb{R}^m$, with expansive map $Q$. Suppose $\det(B(\vec{k}))\not\equiv 0$. 
If there exists an $\varepsilon >0$ such that the Lyapunov exponent $\lambda^{B}(\vec{k})$ associated to the Fourier cocycle generated by $B(\vec{k})$ with base dynamics $Q^{\intercal}$ satisfies
\[
\lambda^{B}(\vec{k})<\log\sqrt{|\det Q|}-\varepsilon,
\]
for a.e. $\vec{k}\in\mathbb{R}^m$, then, for any choice of complex weights, the diffraction $\widehat{\gamma}$ is purely singular with respect to Lebesgue measure. \qed 
\end{theorem}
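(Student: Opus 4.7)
The plan is to use the renormalization structure satisfied by the autocorrelation measure of a primitive stone inflation and translate the Lyapunov bound into a pointwise decay estimate on the Radon--Nikodym density of the absolutely continuous part of $\widehat{\gamma}$. For a primitive stone inflation with expansion $Q$, the type-resolved pair correlations $(\eta_{ij})$ satisfy an exact self-similarity relation under $Q$, which Fourier-transforms to an equation for the matrix-valued diffraction $\widehat{\varUpsilon}(\vec{k})=(\widehat{\gamma}_{ij}(\vec{k}))$ of the form
\[
\widehat{\varUpsilon}(\vec{k}) \;=\; \frac{1}{|\det Q|}\,B(\vec{k})\,\widehat{\varUpsilon}(Q^{\intercal}\vec{k})\,B(\vec{k})^{*}.
\]
Iterating this identity $n$ times produces the cocycle $B^{(n)}(\vec{k})$ on the right-hand side divided by $|\det Q|^{n}$.

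First, I would decompose $\widehat{\varUpsilon}=\widehat{\varUpsilon}_{\textsf{pp}}+\widehat{\varUpsilon}_{\textsf{ac}}+\widehat{\varUpsilon}_{\textsf{sc}}$ and note that the renormalization relation respects this decomposition, because the toral endomorphism $\vec{k}\mapsto Q^{\intercal}\vec{k}$ preserves Lebesgue class on $\mathbb{T}^{m}$. Writing $\widehat{\varUpsilon}_{\textsf{ac}}=\rho(\vec{k})\,\dd\leb(\vec{k})$ for a matrix-valued density $\rho$, the Jacobian change-of-variables in the iterated identity yields the pointwise relation
\[
\rho(\vec{k}) \;=\; \frac{1}{|\det Q|^{n}}\,B^{(n)}(\vec{k})\,\rho\big((Q^{\intercal})^{n}\vec{k}\big)\,B^{(n)}(\vec{k})^{*}
\]
for Lebesgue-a.e.~$\vec{k}$, where the $(Q^{\intercal})^{n}$-action is understood modulo $\ZZ^{m}$. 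In particular, $\|\rho(\vec{k})\|\leqslant |\det Q|^{-n}\,\|B^{(n)}(\vec{k})\|^{2}\,\|\rho((Q^{\intercal})^{n}\vec{k})\|$.

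Second, I would use the hypothesis to control the cocycle growth. Since $\lambda^{B}(\vec{k})<\log\sqrt{|\det Q|}-\varepsilon$ a.e., for each such $\vec{k}$ and any $\delta<\varepsilon$ we have $\|B^{(n)}(\vec{k})\|\leqslant C_{\vec{k}}\,\ee^{n(\log\sqrt{|\det Q|}-\varepsilon+\delta)}$ for all $n$, whence
\[
\frac{\|B^{(n)}(\vec{k})\|^{2}}{|\det Q|^{n}}\;\leqslant\;C_{\vec{k}}^{2}\,\ee^{-2n(\varepsilon-\delta)}\;\longrightarrow\;0.
\]
To pass from this to $\rho(\vec{k})=0$ I would apply the Birkhoff/Kac return argument to the toral endomorphism $Q^{\intercal}$ on $(\mathbb{T}^{m},\leb)$, which is measure-preserving and ergodic because $Q$ is expansive. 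Ergodicity guarantees that for a.e.~$\vec{k}$ the orbit $((Q^{\intercal})^{n}\vec{k})$ returns to any positive-measure set; combining this with integrability of $\|\rho\|$ (finiteness of the diffraction on a fundamental domain), one finds a subsequence $n_{k}$ along which $\|\rho((Q^{\intercal})^{n_{k}}\vec{k})\|$ stays bounded. Together with the vanishing of the prefactor this forces $\rho(\vec{k})=0$ for a.e.~$\vec{k}$, so $\widehat{\varUpsilon}_{\textsf{ac}}\equiv 0$. Choosing the weight vector for the Dirac comb $\omega$ produces $\widehat{\gamma}=\langle w,\widehat{\varUpsilon}\,w\rangle$, which therefore has no absolutely continuous part, establishing the claim.

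The main obstacle I expect is the rigorous justification of the pointwise renormalization identity for $\rho$ itself (as opposed to the measure) and the correct bookkeeping of the Jacobian together with periodization: the cocycle lives naturally on $\mathbb{R}^{m}$ but the base dynamics is toral, and one has to verify that the a.e.~hypothesis on $\mathbb{R}^{m}$ transfers to the a.e.~statement needed for Lebesgue on $\mathbb{T}^{m}$, and that $\det B(\vec{k})\not\equiv 0$ prevents degenerate cancellations in the iterated identity. Once this measure-theoretic transfer is in place, the decay-of-density step is essentially the Fatou--Radon--Nikodym argument sketched above.
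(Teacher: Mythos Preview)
The paper does not prove this theorem: it is quoted from \cite{BGM} and closed with a terminal \qed. The only related argument in the paper is the proof of Proposition~\ref{prop:Lyapunov-spin-singular}, which runs a block-restricted version of the same mechanism and defers the details to \cite[Thm.~3.28]{BGM}. Your proposal is precisely the argument of \cite{BGM}: the exact renormalization identity $\widehat{\varUpsilon}(\vec{k})=|\det Q|^{-1}B(\vec{k})\,\widehat{\varUpsilon}(Q^{\intercal}\vec{k})\,B(\vec{k})^{*}$, its compatibility with the Lebesgue decomposition, the iterated pointwise identity for the density $\rho$, the exponential decay of $|\det Q|^{-n}\|B^{(n)}(\vec{k})\|^{2}$ under the Lyapunov hypothesis, and the recurrence/integrability step forcing $\rho=0$ a.e. So your approach is correct and coincides with the source the paper cites; there is nothing further in the present paper to compare against.

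One minor clarification: the map $\vec{k}\mapsto Q^{\intercal}\vec{k}$ on $\mathbb{T}^{m}$ is a non-invertible measure-preserving endomorphism (ergodic because expansiveness rules out root-of-unity eigenvalues), so the bounded-subsequence step comes from Poincar\'e recurrence for endomorphisms together with $\|\rho\|\in L^{1}(\mathbb{T}^{m})$ (translation-boundedness of the diffraction), rather than a Kac return-time argument as such. This is exactly the technical point you flag in your closing paragraph, and it is handled the same way in \cite{BGM}.
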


\begin{remark}
Note that qubit substitutions are generally not stone inflations because the level-1 supertiles are not merely expanded squares. Nevertheless, since it arises from a primitive inflate-and-subdivide rule, the pair correlations are well defined as limits of Birkhoff averages and are constant in $\seqsp$, and hence the arguments involving Lyapunov exponents in Theorems~\ref{thm: presence of ac} and \ref{thm: absence of ac} follow through for these cases; compare \cite[Rem.~5.6]{BGM}. \exend
\end{remark}

We now present the general structure of Fourier cocycles associated to qubit substitutions, and prove that they satisfy a modified version of Theorem~\ref{thm: presence of ac} whenever $\sub$ is $\chi$-unitary. This is consistent with the presence of absolutely continuous components  shown in Theorem~\ref{thm:main result}(b). 

\begin{prop}\label{prop: Lyapunov exponents Vandermonde}
Let $\mathcal{S}$ be an aperiodic primitive qubit spin substitution in $\ZZ^m$ given by $(Q,\digits,G,W)$.
The Fourier matrix $B(\vec{k})$ is unitarily block diagonalizable into 
\[
B(\vec{k})\cong \bigoplus_{\chi\in \Gdual} B^{ }_{\chi}(\vec{k}),
\]  
for all $\vec{k}\in\mathbb{R}^m$. 
Moreover, if $\sub$ is $\chi$-unitary, the subblock $B^{ }_{\chi}$ is of the form $\sqrt{|\det(Q)|}\,U$, where $U$ is a unitary matrix, which implies $
\lambda^{B^{ }_{\chi}}=\log\sqrt{|\det(Q)|}$
for all $\vec{k} \in\mathbb{R}^m$.  
\end{prop}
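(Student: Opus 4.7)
The plan is to exploit the $G$-equivariance of the spin substitution (property \RT), which forces $B(\vec{k})$ to respect the $G$-action on $\mA = G\times\digits$; then, since $G$ is finite and abelian, the associated representation splits into characters, which yields the claimed block decomposition. I would first compute the matrix elements of $B(\vec{k})$ explicitly. Indexing rows and columns by $(g,\dlet)\in G\times\digits$ and using the rule $\sub(g''\dlet'',\vec{d})=g''W(\dlet'',\vec{d})\dlet$ together with the fact that the digit of $\sub(g''\dlet'',\vec{d})$ is forced to equal $\vec{d}$, one finds
\[
B(\vec{k})_{(g\dlet'),\,(g''\dlet'')} \;=\; \delta_{g,\,g''W(\dlet'',\dlet')}\,\ee^{2\pi\ii\langle\vec{k}\mid\dlet'\rangle}.
\]
Under the identification $\ell^{2}(\mA)\cong\bigoplus_{\dlet\in\digits}\ell^{2}(G)$, this says that the component sending the $\dlet''$-fiber to the $\dlet'$-fiber equals $\ee^{2\pi\ii\langle\vec{k}\mid\dlet'\rangle}$ times the right-regular representation $R_{W(\dlet'',\dlet')}$ on $\ell^{2}(G)$.

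Next, I would apply the Fourier transform on $G$. The vectors $|\chi,\dlet\rangle=\tfrac{1}{\sqrt{|G|}}\sum_{g\in G}\chi(g)|g\dlet\rangle$, for $\chi\in\Gdual$ and $\dlet\in\digits$, form an orthonormal basis of $\ell^{2}(\mA)$ that simultaneously diagonalizes every $R_h$ (with eigenvalue $\overline{\chi(h)}$ on the $\chi$-span). A direct substitution then shows that $\mathrm{span}\{|\chi,\dlet\rangle\}_{\dlet\in\digits}$ is $B(\vec{k})$-invariant for each $\chi$, and the induced block has the form
\[
B_{\chi}(\vec{k}) \;=\; D(\vec{k})\cdot\chi(W)^{*}, \qquad D(\vec{k})=\mathrm{diag}\bigl(\ee^{2\pi\ii\langle\vec{k}\mid\dlet\rangle}\bigr)_{\dlet\in\digits},
\]
where $\chi(W)^{*}$ is the conjugate transpose of $\chi(W)$. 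Summing over $\chi\in\Gdual$ yields the asserted direct-sum decomposition, and the change of basis is manifestly unitary.

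For the $\chi$-unitary case, $\tfrac{1}{\sqrt{|\digits|}}\chi(W)^{*}$ is again unitary, and $D(\vec{k})$ is diagonal unitary. Using $|\digits|=|\det Q|$ (because $\digits$ is a complete set of coset representatives for $\ZZ^{m}/Q\ZZ^{m}$), we obtain
\[
B_{\chi}(\vec{k}) \;=\; \sqrt{|\det Q|}\cdot\Bigl(D(\vec{k})\cdot\tfrac{1}{\sqrt{|\digits|}}\chi(W)^{*}\Bigr) \;=\; \sqrt{|\det Q|}\,U(\vec{k}),
\]
with $U(\vec{k})$ unitary. Each factor in the cocycle product $B_{\chi}^{(n)}(\vec{k})=B_{\chi}(\vec{k})B_{\chi}(Q^{\intercal}\vec{k})\cdots B_{\chi}((Q^{\intercal})^{n-1}\vec{k})$ is thus $\sqrt{|\det Q|}$ times a unitary, so the product is a unitary scaled by $|\det Q|^{n/2}$, whose operator norm is exactly $|\det Q|^{n/2}$. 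Hence $\lambda^{B_{\chi}}(\vec{k})=\log\sqrt{|\det Q|}$ for every $\vec{k}\in\mathbb{R}^{m}$.

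The main technical point to watch is that the spatial phase in the formula for $B(\vec{k})$ depends only on the row-digit $\dlet'$ (the target position of the tile in a supertile) and is completely independent of the spin $g$; this is what allows the \emph{same} diagonal factor $D(\vec{k})$ to appear in every $\chi$-block and makes the decomposition genuinely block-diagonal rather than merely block-triangular. Beyond this conceptual point, the remaining work is careful transpose/conjugate bookkeeping, which can be cross-checked against the explicit Fourier matrix in Eq.~\eqref{eq: Fourier matrix - TripleRS} for the triomino example.
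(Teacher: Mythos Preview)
Your argument is correct and follows essentially the same route as the paper: both recognize that the $G$-equivariance forces $B(\vec{k})$ to be a sum of group-translation operators tensored with $|\digits|\times|\digits|$ blocks (the paper writes this as $B(\vec{k})=\sum_{g}\phi(L_g)\otimes Z_g$, you phrase it as ``each digit-block is a phase times a regular-representation matrix''), then diagonalize the group part via characters and read off $B_\chi$ as a diagonal phase matrix times a row-rescaled $\chi(W)$; the $\chi$-unitary conclusion is then immediate in both treatments. One bookkeeping caution: with your choice of eigenvectors $|\chi\rangle=\tfrac{1}{\sqrt{|G|}}\sum_g\chi(g)|g\rangle$ the eigenvalue of the translation operator is $\overline{\chi(h)}$, so your $B_\chi=D(\vec{k})\,\chi(W)^{*}$ is what the paper labels $B_{\bar\chi}$ --- this is a harmless relabeling of the summands in $\bigoplus_{\chi}B_\chi$, but be aware of it when you cross-check against the triomino blocks in Example~\ref{ex: Triomino Lyapunov exponents}.
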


\begin{proof} 
It follows from the construction and from Property~\textbf{(R2)} of the substitution $\mathcal{S}$ that the Fourier matrix $B(\vec{k})$ is of the form 
\begin{equation}\label{eq: general form B}
B(\vec{k})=\sum_{g\in G}\phi(L_g)\otimes Z_g
\end{equation}
where $\phi:S_{|G|}\to\text{GL}(d,\mathbb{Z})$ is the permutation representation, and $\sum Z_g$ is a rank-1 matrix whose rows are of the form 
$r^{ }_\ell=x^{ }_\ell(1,\ldots,1)$, with $x_\ell=\ee^{2\pi\ii \left\langle \vec{k}\mid \vec{d}\right\rangle}
$ for some $\vec{d}\in \mathcal{D}$.

For the proof of the block diagonal structure, one invokes the property of the permutation matrices $\phi(L_g)$ being simultaneously diagonalisable. The subspaces $V_j=v_j\otimes \mathbb{C}^{|\digits|}$ are invariant with respect to $B(\vec{k})$, where $\left\{v_j\right\}$ are the (normalised) shared eigenvectors of the matrices $\left\{\phi(L_g)\right\}$. One can then choose the columns of the similarity transformation $S$ to be $v_j\otimes e^{ }_{\ell}$, where $e^{ }_\ell$ are the canonical basis vectors of $\mathbb{C}^{|\digits|}$. The corresponding blocks will then be of the form 
$B_{\chi}(\vec{k})=\sum_{g\in G} \chi(g) Z_g(\vec{k})$, for $\chi\in \Gdual$.

To prove the last claim, note that $B_{\chi}(\vec{k})$ also admits the representation
\[
B_{\chi}(\vec{k})=\begin{pmatrix}
p^{ }_1(\vec{k})\chi(W)^{ }_1\\
p^{ }_2(\vec{k})\chi(W)^{ }_2\\
\vdots \\
p^{ }_K(\vec{k})\chi(W)^{ }_K,
\end{pmatrix}
\]
where $p^{ }_\ell(\vec{k})=\ee^{2\pi\ii \left\langle\vec{k}\mid \vec{d}_\ell\right\rangle}$ with $\vec{d}_\ell\in \mathcal{D}$ and $\chi(W)^{ }_\ell$ is the $\ell$th row of the matrix $\chi(W)$. 
From the unitarity of $\frac{1}{\sqrt{|\digits|}}\chi(W)$ we get
\[
\big(B^{ }_{\chi}(\vec{k})B^{\dag}_{\chi}(\vec{k})\big)_{ij}=p^{ }_{i}(\vec{k})\,\overline{p}^{ }_{j}(\vec{k})|\digits|\delta_{ij}
\]
which yields $B^{ }_{\chi}(\vec{k})B^{\dag}_{\chi}(\vec{k})=|\digits|\mathbb{I}_{|\digits|}$ for all $\vec{k}\in\mathbb{R}^m$. As a direct consequence, we get that the cocycle  induced by this block satisfies $B^{(n)}_\chi(\vec{k})=|\digits|^{\frac{n}{2}}U^{(n)}(\vec{k})$, where $U^{(n)}(\vec{k})$ is a unitary matrix for all $\vec{k}\in\mathbb{R}^m$, which further implies that $B_\chi(\vec{k})$ has trivial Lyapunov spectrum, i.e., all exponents exist and are equal to $\lambda=\log\sqrt{|\digits|}$.
\end{proof}

\begin{example}\label{ex: Triomino Lyapunov exponents}
For the Triomino substitution in Example~\ref{ex:triomino}, we get that the Fourier matrix $B(\vec{k})$ can be block-diagonalised into the blocks
\[
B_{\chi^{ }_0}(\vec{k})=\begin{pmatrix}
1 & 1 & 1\\
x & x & x\\
y & y & y 
\end{pmatrix} \quad B_{\chi^{ }_1}(\vec{k})=\begin{pmatrix}
1 & 1 & 1\\
x & \omega x & \omega^2 x\\
y & \omega^2 y & \omega y 
\end{pmatrix} \quad \quad B_{\chi^{ }_2}(\vec{k})=\begin{pmatrix}
1 & 1 & 1\\
x & \omega^2 x & \omega x\\
y & \omega y & \omega^2 y 
\end{pmatrix},
\]
It can easily be checked that $\sub$ is  $\chi_1$- and $\chi_2$-unitary, which by Proposition~\ref{prop: Lyapunov exponents Vandermonde} implies that $B(\vec{k})$ has Lyapunov exponent $\lambda=\log\sqrt{3}$ of multiplicity 6.
In this case, we have exactly $(|\digits|-1)$ blocks with this exponent. \exend
\end{example}

\begin{remark}[Non-invertibility of $B(\vec{k})$]
Note that, in Example~\ref{ex: Triomino Lyapunov exponents}, $B_{\chi^{ }_0}(\vec{k})$ is a rank-1 matrix, which implies that, for all $\vec{k}$, $B_{\chi^{ }_0}(\vec{k})$ (and consequently $B(\vec{k})$) has zero as an eigenvalue of multiplicity at least $|\digits|-1$. This \textit{a posteriori} proves that $\det(B(\vec{k}))=0$ for all $\vec{k}\in\mathbb{R}^m$ for any qubit spin substitution $\mathcal{S}$, so Theorem~\ref{thm: presence of ac} cannot directly be invoked. However, when one restricts to the invariant subspace acted upon by $B_\chi(\vec{k})$, one recovers an analogous criterion which is satisfied by all of these examples. This is exactly the same mechanism for Rudin--Shapiro, which belongs to this family of substitution rules; see \cite{Manibo}. \exend
\end{remark}

As mentioned in the previous remark, Proposition~\ref{prop: Lyapunov exponents Vandermonde} suggests that our tilings could have absolutely continuous diffraction spectrum, which is something we directly confirm as a corollary of our main result regarding the spectral measures in the next section. 
Note however that Proposition~\ref{prop: Lyapunov exponents Vandermonde} is interesting in its own right because so far, we are not aware of an example of a substitution tiling with expansive map $Q$ with Lyapunov exponent equal to $\lambda=\log\sqrt{|\det(Q)|}$ that \emph{does not} contain an absolutely continuous spectral component, which suggests that a variant of the criterion in Theorem~\ref{thm: presence of ac} might be sufficient as well. 

The growth rate of the norm of the blocks $B_{\chi}(\vec{k})$ prescribes the scaling behavior of the spectral measures in $H^{\chi}$. It also determines the properties of diffraction measures where the weight functions are given by the characters in $\Gdual$.  Together with Theorem~\ref{thm:diffraction dynamical}, one can look at the Lyapunov exponents of $B_{\chi}(\vec{k})$ to rule out the presence of absolutely continuous components 
in $H^{\chi}$ when 
$\frac{1}{|\digits|}\chi(W)$ is not unitary but is still of full rank. 

\begin{prop}\label{prop: LE full rank}
Let $\sub$ be an aperiodic primitive spin substitution in $\ZZ^{m}$ with spin matrix $W$. If $\chi(W)$ is of full rank, $B_{\chi}(\vec{k})$ is invertible for a.e. $\vec{k}\in\mathbb{R}^m$. 
Moreover, the Lyapunov exponent $\lambda^{B_{\chi}}(\vec{k})$ exists as a limit and is constant for a.e. $\vec{k}\in\mathbb{R}^{m}$. 
\end{prop}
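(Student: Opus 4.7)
The plan is to read off both claims from the explicit row-product factorization of $B_{\chi}(\vec{k})$ already derived inside the proof of Proposition~\ref{prop: Lyapunov exponents Vandermonde}, and then to invoke Kingman's subadditive ergodic theorem for the toral base dynamics.

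For the invertibility statement, I would reuse the expression
\[
B_{\chi}(\vec{k}) = D(\vec{k})\,\chi(W), \qquad D(\vec{k}) = \operatorname{diag}\bigl(p_1(\vec{k}),\ldots,p_{|\digits|}(\vec{k})\bigr),
\]
where $p_\ell(\vec{k}) = \ee^{2\pi\ii\left\langle\vec{k}\mid\vec{d}_\ell\right\rangle}$ and the $\vec{d}_\ell$ are the digits in $\digits$. Since $|p_\ell(\vec{k})|=1$ for every $\vec{k}$, the diagonal factor $D(\vec{k})$ is invertible at every point, so
\[
\det(B_{\chi}(\vec{k})) = \Bigl(\prod_{\ell} p_\ell(\vec{k})\Bigr)\det(\chi(W))
\]
is nowhere vanishing as soon as $\chi(W)$ has full rank. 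This actually yields invertibility everywhere, not only almost everywhere, so the first assertion is essentially a one-line consequence of the previous proposition.

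For the Lyapunov exponent, I would observe that $B_{\chi}$ is $\ZZ^m$-periodic in $\vec{k}$, so the cocycle descends to $\mathbb{T}^m$ with base map the toral endomorphism $T\colon\vec{k}\mapsto Q^{\intercal}\vec{k}$. Because $Q$ is expansive, $T$ is an expanding endomorphism of $\mathbb{T}^m$, and Lebesgue measure $\leb$ is $T$-invariant and ergodic (a classical fact for expanding toral endomorphisms). The sequence $f_n(\vec{k}) := \log\|B^{(n)}_{\chi}(\vec{k})\|$ is subadditive along $T$-orbits by submultiplicativity of the operator norm, and $f_1\in L^{\infty}(\mathbb{T}^m)\subset L^1(\mathbb{T}^m,\leb)$ because the entries of $B_{\chi}$ are bounded trigonometric polynomials. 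Kingman's subadditive ergodic theorem then yields existence of the limit
\[
\lambda^{B_{\chi}}(\vec{k}) = \lim_{n\to\infty}\frac{1}{n}\log\|B^{(n)}_{\chi}(\vec{k})\|
\]
for $\leb$-a.e.\ $\vec{k}\in\mathbb{T}^m$, and ergodicity of $T$ forces the limit to be a.e.\ constant; periodicity then lifts the conclusion to a.e.\ $\vec{k}\in\mathbb{R}^m$.

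I do not anticipate a serious obstacle: all ingredients, namely the factorization of $B_{\chi}$, ergodicity of the expanding toral endomorphism $Q^{\intercal}$, and the subadditive ergodic theorem, are either classical or already available from earlier sections. The only point worth flagging is that Kingman's theorem suffices for the stated conclusion; the full-rank hypothesis on $\chi(W)$ is used essentially to secure the integrability of $\log\|B^{-1}_{\chi}\|$ that would be required if one wished to upgrade the statement to an Oseledets decomposition with a complete Lyapunov spectrum.
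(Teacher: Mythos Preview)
Your proof is correct, and for the Lyapunov-exponent claim it coincides with the paper's argument: both reduce to Kingman's subadditive ergodic theorem applied to the cocycle over the ergodic toral endomorphism $\vec{k}\mapsto Q^{\intercal}\vec{k}$, with your version simply spelling out the integrability and subadditivity that the paper leaves implicit.

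For the invertibility assertion you take a genuinely different (and sharper) route. The paper observes that $B_{\chi}(0)=\chi(W)$ has full rank and then appeals to analyticity of $\vec{k}\mapsto\det B_{\chi}(\vec{k})$ to conclude that the zero set has Lebesgue measure zero. You instead exploit the factorization $B_{\chi}(\vec{k})=D(\vec{k})\,\chi(W)$ with $D(\vec{k})$ a diagonal matrix of unimodular exponentials, already recorded in the proof of Proposition~\ref{prop: Lyapunov exponents Vandermonde}, to obtain $\det B_{\chi}(\vec{k})=\bigl(\prod_{\ell}p_{\ell}(\vec{k})\bigr)\det\chi(W)\neq 0$ for \emph{every} $\vec{k}$. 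Your argument is more elementary (no real-analytic zero-set lemma is needed) and yields the stronger conclusion of everywhere invertibility; the paper's analyticity argument, on the other hand, would survive in situations where such a clean product structure is unavailable. Either way, the full-rank hypothesis is only used for this invertibility step, as you correctly note at the end.
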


\begin{proof}
Almost everywhere invertibility follows from $\chi(W)=B_{\chi}(0)$ being of full rank, and from the analyticity of $B_{\chi}(\vec{k})$.
Moreover, $B(\vec{k})$ is $\mathbb{Z}^m$-periodic, i.e., $B(\vec{k})=B(\vec{j}+\vec{k})$ for $\vec{j}\in\Z^m$ and $\vec{k}\in\mathbb{R}^m$. 
Since the map $\vec{k}\mapsto Q^{\intercal}\vec{k}$ is ergodic with respect to the Haar measure of $\mathbb{T}^m$, the almost sure existence of the Lyapunov exponent follows from Kingman's subadditive ergodic theorem; see \cite{BP,Viana}. 
\end{proof}

\begin{prop}\label{prop:Lyapunov-spin-singular}
Let $\sub$ be as in Proposition~\textnormal{\ref{prop: LE full rank}} and let $\chi\in\Gdual$.
If the almost sure value of the Lyapunov exponent $\lambda^{B_{\chi}}$ is strictly less than $\log\sqrt{|\digits|}$, the spectral measure $\sigma_f$ is singular for all $f\in H^{\chi}$ and is either pure point or purely singular continuous.
\end{prop}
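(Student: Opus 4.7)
The plan is to reduce the statement to an application of the singularity criterion of Theorem~\ref{thm: absence of ac}, but carried out one block at a time on the cocycle. The starting observation is that for any digit system $(Q,\digits)$ one has $|\det(Q)|=|\digits|$, so the threshold $\log\sqrt{|\digits|}$ appearing in the hypothesis is exactly the critical value $\log\sqrt{|\det Q|}$ that separates singular from absolutely continuous diffraction in \cite{BGM}. The hypothesis on $\lambda^{B_{\chi}}$ together with Proposition~\ref{prop: LE full rank} therefore gives a uniform strict gap: there exists $\varepsilon>0$ such that $\lambda^{B_{\chi}}(\vec{k})<\log\sqrt{|\digits|}-\varepsilon$ for a.e.\ $\vec{k}\in\mathbb{R}^m$, and $B_{\chi}(\vec{k})$ is invertible for a.e.\ $\vec{k}$.

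The next step is to identify a family of test functions in $H^{\chi}$ whose spectral measures are realized as diffraction measures governed by $B_{\chi}$. For each spin-free letter $\dlet\in\digits$, Example~\ref{ex:qubit diffraction dynamical} together with Theorem~\ref{thm:diffraction dynamical} expresses $\sigma_{f^{\chi}_{\dlet}}$ as the diffraction of the $\chi$-weighted Dirac comb supported on $[\dlet]$-positions. The pair correlations of these weighted combs obey the usual renormalization relation driven by the Fourier matrix $B(\vec{k})$. By Proposition~\ref{prop: Lyapunov exponents Vandermonde} the similarity transformation that block-diagonalizes $B(\vec{k})$ is independent of $\vec{k}$, and the rows of $B_{\chi}$ encode exactly how the character $\chi$ propagates through the supertile hierarchy. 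Consequently, the restricted cocycle iterate $B_{\chi}^{(n)}(\vec{k})$ controls the Fourier--Bohr coefficients of precisely the combs underlying $\sigma_{f^{\chi}_{\dlet}}$.

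With that identification in hand, the argument of \cite[Thm.~5.1]{BGM} (the renormalization--Rajchman estimate behind Theorem~\ref{thm: absence of ac}) runs inside the $\chi$-block: an a.e.\ strict sub-critical bound on $\|B_{\chi}^{(n)}(\vec{k})\|$ rules out a Lebesgue density for the diffraction measures $\sigma_{f^{\chi}_{\dlet}}$. Thus each generator $f^{\chi}_{\dlet}$ of $H^{\chi}$ has singular spectral measure. Invoking spectral purity (Proposition~\ref{prop:purity-subshift}), the maximal spectral type on $H^{\chi}$ is singular, which forces every $f\in H^{\chi}$ to have singular spectral measure. Since absolutely continuous has been excluded and the Lebesgue decomposition into pure point plus singular continuous exhausts the singular part, the conclusion follows.

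The main obstacle is the justification in the second paragraph: Theorem~\ref{thm: absence of ac} is stated under $\det B(\vec{k})\not\equiv 0$, and the remark after Example~\ref{ex: Triomino Lyapunov exponents} already flags that $B(\vec{k})$ is never invertible for qubit spin substitutions because of the rank-$1$ trivial-character block. The work is therefore in verifying cleanly that (i) the $\vec{k}$-independent unitary block diagonalization makes the cocycle split into invariant pieces so that the Birkhoff-style estimates in \cite{BGM} can be localized to $B_{\chi}$, and (ii) the full-rank assumption on $\chi(W)$, via Proposition~\ref{prop: LE full rank}, is exactly what is needed for $B_{\chi}(\vec{k})$ to play inside its invariant subspace the role that $B(\vec{k})$ plays in the stone-inflation setting of Theorem~\ref{thm: absence of ac}.
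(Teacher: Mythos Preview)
Your proposal is correct and follows essentially the same route as the paper's proof. The only cosmetic difference is that the paper works with the single test function $f=\sum_{\dlet\in\digits}f^{\chi}_{\dlet}$ rather than the individual generators, and it carries out the block-localization you flag as the main obstacle by explicitly decomposing the Radon--Nikodym density $h(\vec{k})$ of the putative absolutely continuous component and observing that the weight vectors involved lie in $V_{\chi}=(\chi(g_{1}),\ldots,\chi(g_{|G|}))\otimes\mathbb{C}^{|\digits|}$, on which the full cocycle acts as $B_{\chi}$; the translation-boundedness contradiction from \cite[Thm.~3.28]{BGM} then finishes exactly as in your sketch.
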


\begin{proof}
Let $\T\in \seqsp$ and let $\omega_{\chi}=\sum_{\vec{j}\in\Z^m} \chi(\spin(\T-\vec{j}))\delta_{\vec{j}}$ and let $\widehat{\gamma}:=\widehat{\gamma}_{\chi}$ be the corresponding (fundamental) diffraction measure. From Example~\ref{ex:qubit diffraction dynamical}, we know that $\widehat{\gamma}$ is the spectral measure $\sigma_f$ of the function $f=\sum_{\dlet\in\digits}f^{\chi}_{\dlet}$. From the spectral purity result in Proposition~\ref{prop:purity-subshift}, it suffices to determine the spectral type of $\sigma_f$ to know the spectral type of $H^{\chi}$. The idea is then to use the block diagonal structure of the Fourier matrix in Proposition~\ref{prop: Lyapunov exponents Vandermonde} and a slight modification of the proof of Theorem~\ref{thm: absence of ac}. 

Let $\widehat{\gamma}^{ }_{\textsf{ac}}=h(\vec{k})\mu_{\text{L}}$ be the absolutely continuous component of the diffraction (now viewed on $\mathbb{R}^m$), where $\mu_{\text{L}}$ is Lebesgue measure in $\R^m$ and $h(\vec{k})$ is the corresponding Radon--Nikodym density. One can decompose $h(\vec{k})$ into 
\[
h(\vec{k})=\sum_{\dlet,\dlet^{\prime}\in\mathcal{D}}\sum_{g,g^{\prime}\in G}\chi(g)\overline{\chi(g^{\prime})}h_{\alet,\alet^{\prime}}(\vec{k})
\]
where $\alet=g\dlet$ and $\alet^{\prime}=g\dlet^{\prime}$. The functions $h_{\alet,\alet^{\prime}}(\vec{k})$ satisfy $h_{\alet,\alet^{\prime}}(-\vec{k})=h_{\alet^{\prime},\alet}(\vec{k})=\overline{h_{\alet,\alet^{\prime}}(\vec{k})}$. This allows one to rewrite $h_{\alet,\alet^{\prime}}(\vec{k})$ as 
\[
h_{\alet,\alet^{\prime}}(\vec{k})=\sum_{\ell=1}^{s}\chi(g)(v_{\ell}(\vec{k}))_{\alet}(v_{\ell}(\vec{k}))^{T}_{\alet^{\prime}}\overline{\chi(g^{\prime})}
\] 
It then suffices to look at the growth rate of the vectors in the subspace \[V_{\chi}=(\chi(g^{ }_1),\ldots,\chi(g^{ }_{|G|}))\otimes \mathbb{C}^{|\digits|}\] under the matrix cocycle $B^{(n)}(\vec{k})$. From Proposition~\ref{prop: Lyapunov exponents Vandermonde}, we know that the restriction of $B(\vec{k})$ on this subspace is determined by the action of $B_{\chi}(\vec{k})$ on the $\mathbb{C}^{|\digits|}$-component. Now suppose the almost sure value of the Lyapunov exponent $\lambda^{B_{\chi}}<\log\sqrt{|\digits|}$. This implies that, for all nonzero starting vector $v\in\mathbb{C}^{|\digits|}$, $\|vB^{(n)}_{\chi}(\vec{k})\|$ 
grows exponentially fast. This contradicts the translation boundedness of $h(\vec{k})$ unless $h(\vec{k})=0$ for Lebesgue-a.e. $\vec{k}$; compare \cite[Thm.~3.28]{BGM}. From this, one gets $\widehat{\gamma}^{ }_{\textsf{ac}}=0$ and hence the diffraction, and equivalently, $\sigma_f$ must be singular, which completes the proof.
\end{proof}

When $\chi(W)$ is neither of full rank, nor rank-$1$, one can still construct the Fourier cocycle for $\sub^{(\chi)}$ and compute its Lyapunov exponent. If it satisfies the singularity condition in Theorem~\ref{thm: absence of ac}, one can invoke Proposition~\ref{prop: factors} to conclude that $H^{\chi}$ only admits singular spectral measures. 

\section{Examples}\label{sec:examples}

\subsection{Planar example with all spectral types}
\begin{example}[Vierdrachen substitution]\label{ex:vierdrachen}

Let $Q=\begin{pmatrix}
1 & -1\\
1 & 1
\end{pmatrix}$ with the associated digit set $\digits=\left\{(0,0),(1,0)\right\}=\left\{\dlet_0,\dlet_1\right\}$. 
Choose the group of spins to be the Klein-4 group $G=C_2\times C_2=\left\{e,a,b,ab\right\}$ and the spin matrix to be $W=\begin{pmatrix}
e & a\\
e & ab
\end{pmatrix}$. We call the qubit spin substitution $\sub$ with these defining data the \emph{Vierdrachen} substitution, alluding to the geometry of the twin dragon tiling \cite{Vince} and the spin group being the Vierergruppe. 

This table shows the colors representing the alphabet:

\centerline{\includegraphics[scale=.15]{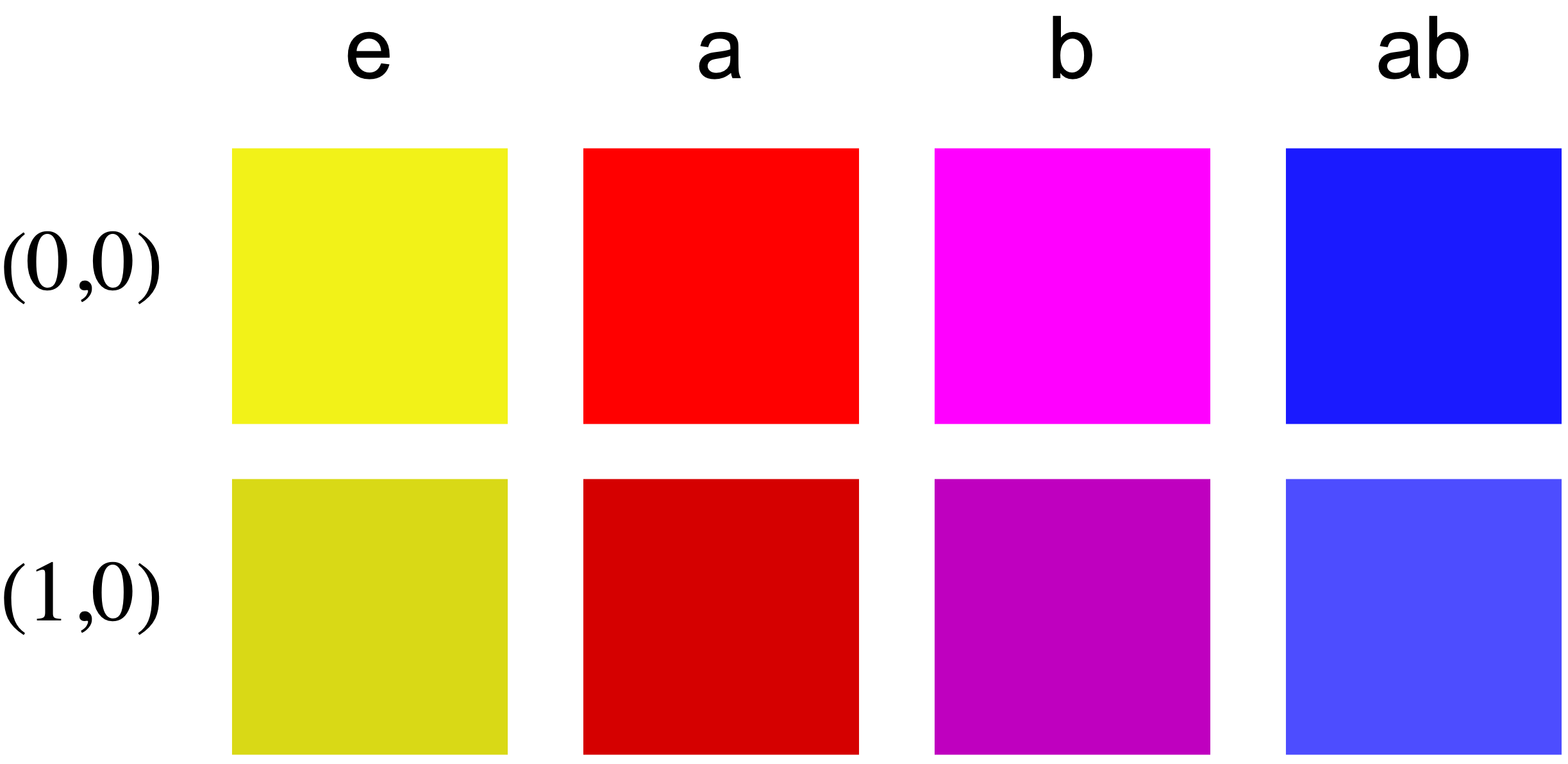}}
\noindent Figure \ref{fig:Viersupertiles} shows the 1- and 2-supertiles organized in the same way.

\begin{figure}[h!]
\includegraphics[scale=.15]{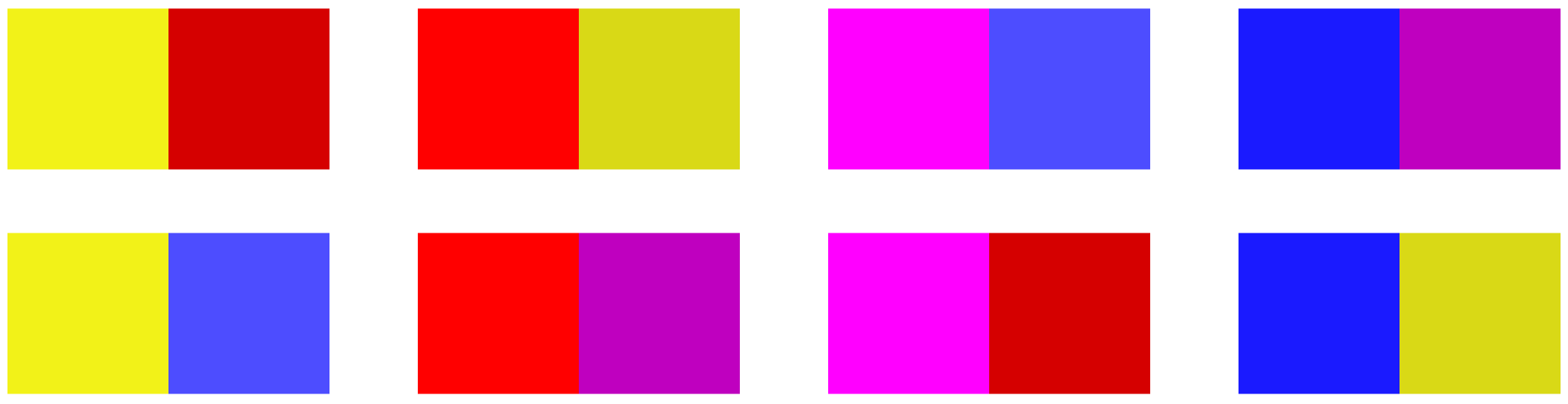}\\
\vskip 2em
\includegraphics[scale=.225]{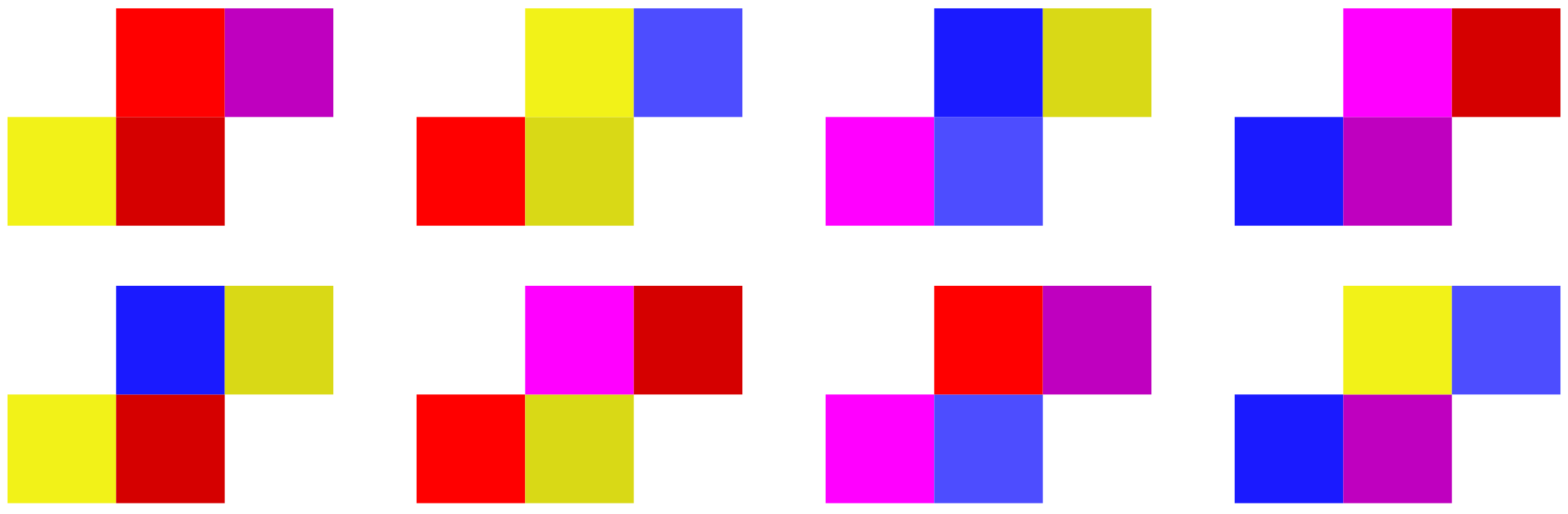}
\caption{The level -1 and -2 supertiles of the Vierdrachen substitution.}
\label{fig:Viersupertiles}
\end{figure}

The left of Figure \ref{fig:Vierbigtile} shows the 7-supertile of type $\dlet_0$, and the right shows its image under the factor map taking a letter to its spin. This ``forget the digits" map, which is a single-block code on $\seqsp$, likely represents a topological conjugacy for many spin substitutions. The argument would be adapted from the proof of equivalence of the Rudin--Shapiro substitution space and the original RS sequence space on $\mA = \{-1, ~1\}$.

\begin{figure}[h!]
\includegraphics[scale=.5]{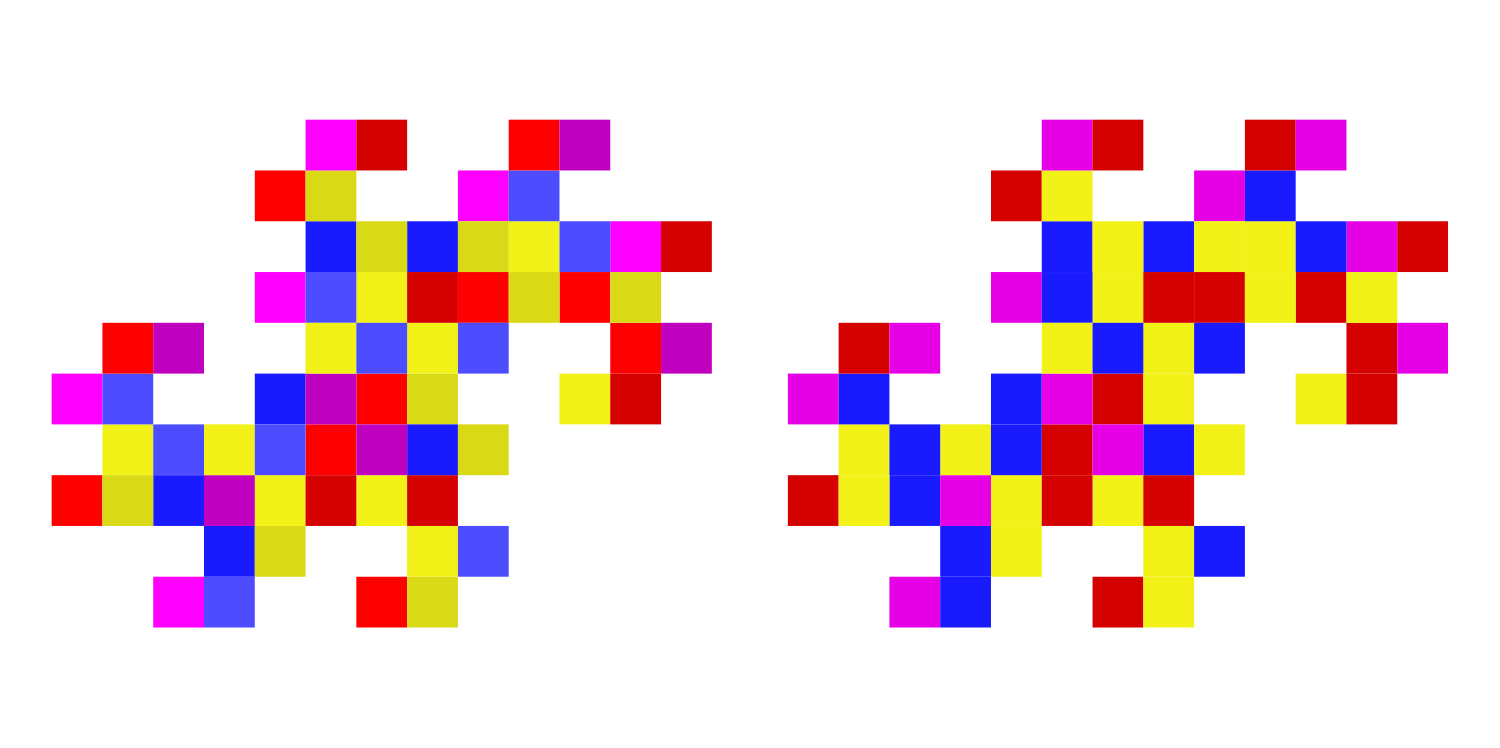}
\caption{The level-7 supertile for $\dlet_0$ and its image under the forget-the-digits map.}\label{fig:Vierbigtile}
\end{figure}

Let $\Gdual=\left\{\chi_0, \chi^{ }_1,\chi^{ }_2,\chi^{ }_3\right\}$, where the corresponding kernels for $\chi_i,0\leqslant i\leqslant 3$ are $G, \left\langle a\right\rangle,\left\langle b\right\rangle$ and $\left\langle ab \right\rangle$, respectively. The corresponding matrices are then 
\[
\chi^{ }_0(W)=\begin{pmatrix}
1 & 1\\
1 & 1 
\end{pmatrix}  \quad
\chi^{ }_1(W)=\begin{pmatrix}
1 & 1\\
1 & -1 
\end{pmatrix} \quad
\chi^{ }_2(W)=\begin{pmatrix}
1 & -1\\
1 & -1 
\end{pmatrix}  \quad
\chi^{ }_3(W)=\begin{pmatrix}
1 & -1\\
1 & 1 
\end{pmatrix}. 
\]
From Theorem~\ref{thm:main result}(a), $\chi_0$ corresponds to the pure point component of the spectrum $H^{\chi_0}$ arising from the odometer. 
The factor substitution for $\chi_0$ is periodic, and a large supertile is shown on the far left of Figure~\ref{fig:vierdrachen}.
It is easy to check that $\frac{1}{\sqrt{2}}\chi_1(W)$ and $\frac{1}{\sqrt{2}}\chi_3(W)$ are unitary, which means $H^{\chi_1}$ and $H^{\chi_2}$ each decompose into two orthogonal cyclic subspaces, all having Lebesgue spectral measure by Theorem~\ref{thm:main result}(b). These two substitutions appear the most disordered of the four in Figure~\ref{fig:vierdrachen}.  Lastly, the matrix $\chi_2(W)$ is rank-1, which means that the factor substitution is bijective. Thus $H^{\chi_3}$ comprises of singular continuous components of multiplicity at most $2$ by Theorem~\ref{thm:main result}(c).

\begin{figure}[h]
\includegraphics[scale=0.375]{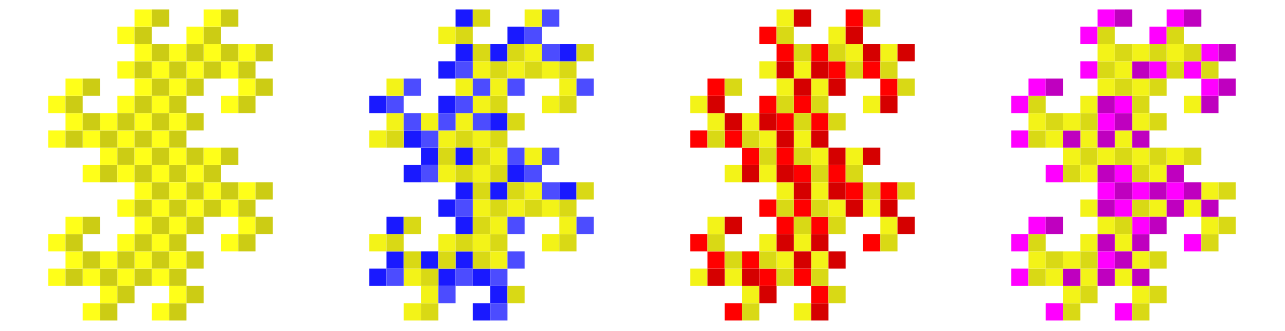}
\caption{The image of $\sub^{8}(\dlet_0)$ under the single-block codes given by each of the characters. 
From (L) to (R), the corresponding character $\chi$ and the spectral type of $H^{\chi}$: $\chi^{ }_0$ (\textsf{pp}), $\chi^{ }_1$ (\textsf{ac}), $\chi^{ }_2$ (\textsf{sc}) and $\chi^{ }_3$ (\textsf{ac}).}\label{fig:vierdrachen}
\end{figure}

This is the planar analog of the one-dimensional substitution in \cite{BGG}, which has the same spin matrix as the Vierdrachen and has $Q=2$ and $\digits=\left\{0,1\right\}$. 

\end{example}

\subsection{Aperiodic example with a non-trivial periodic factor}

\begin{example}\label{ex:pp-factor}
Let $Q=3$, $\mathcal{D}=\left\{0,1,2\right\}$ and $G=C_4=\left\langle\ii \right\rangle$. Consider the spin matrix 
\[
W=\begin{pmatrix}
1 & \ii & 1\\
1 & -\ii & -1\\
1 & \ii & -1
\end{pmatrix}. 
\]
One can verify that the corresponding spin substitution $\sub$ is primitive and aperiodic via the substitution matrix and the existence of non-trivial proximal pairs, respectively. Moreover, $\chi_2(W)$ is a rank-1 matrix, where $\chi_2\in \Gdual$ with $\chi_2(g)=g^2$ for $g\in C_4$. From Theorem~\ref{thm:main result}, $\chi_2$ produces a factor substitution $\sub^{(\chi_2)}$. Here, $\sub^{(\chi_2)}\colon \alet\mapsto \alet\blet\alet, \blet\mapsto \blet\alet\blet$, which is periodic. This means that the set of eigenvalues contains $\ZZ[\frac{1}{3}]\times C_2$. One can also check that the substitution matrix $M$ does not have any eigenvalue of modulus $\sqrt{3}$, and hence the spectrum of $L^{2}(\seqsp,\mu)$ is purely singular by Theorem~\ref{thm: Berlinkov-Solomyak}. There are also analogues of this example in higher dimensions. \exend
\end{example}

\subsection{Qubit spin substitutions from Kronecker products}
It is natural to ask whether one can build a new qubit spin substitution using two old ones by combining their spin matrices.
 Given two qubit substitutions $\mathcal{S}_1=(\mathcal{D}_1,W_1)$ and $\mathcal{S}_2=(\mathcal{D}_2,W_2)$ with underlying spin groups $G_1$ and $G_2$, respectively, one can  define $\mathcal{S}:=(\mathcal{D},W_1\otimes W_2)$, where $|\mathcal{D}|=|\mathcal{D}_1|\cdot|\mathcal{D}_2|$ with spin group $G=G_1\times G_2$ as follows
\begin{enumerate}
\item Index the elements of the new digit set $\mathcal{D}$ via elements of $\mathcal{D}_1\otimes \mathcal{D}_2$, i.e., $\vec{d}=\vec{d}_1\otimes \vec{d}_2$ and order them using the lexicographic order in $\mathcal{D}_1\otimes\mathcal{D}_2$.
\item For $\vec{d},\vec{d}^{\prime}\in \mathcal{D}$, define $W(\vec{d},\vec{d}^{\prime}):=(W_1(\vec{d}^{ }_1,\vec{d}^{\prime}_1),W_2(\vec{d}^{ }_2,\vec{d}^{\prime}_2))\in G_1\times G_2$.  
\end{enumerate}

Some comments on the notation are in order. 
We emphasize that the tensor product $\vec{d}_1\otimes \vec{d}_2$ does not have any spatial interpretation whatsover. It is only used as bookkeeping in order to assign the right spin in the supertiles of the new digit set, which is done in the second line above.
We also point out that this substitution is not unique. In fact, our only restriction is on the cardinality of $\mathcal{D}$, and we have the freedom to choose the expansive map $Q$, the digit set $\mathcal{D}$ itself and  even the dimension of the space where the substitution is defined. 
For such an $\mathcal{S}$, one has the following properties.
\begin{prop}
Let $\mathcal{S}_1$ and $\mathcal{S}_2$ be defined as above and $G=G_1\times G_2$ be the underlying spin group. Let $\mathcal{S}=(\mathcal{D},W_1\otimes W_2)$ be a qubit substitution arising from $\mathcal{S}_1$ and $\mathcal{S}_2$ via the construction above. Then, the following holds. 
\begin{enumerate}
\item $\mathcal{S}$ is primitive if and only if $\mathcal{S}_1$ and $\mathcal{S}_2$ are both primitive. 
\item Let $\chi=\chi_1\otimes \chi_2\in \widehat{G_1\times G_2}$. Then, 
$\sub$ is $\chi$-unitary if and only 
$\sub_1$ and $\sub_2$ are $\chi_1$- and $\chi_2$-unitary, respectively. Similarly, 
$\sub$ is $\chi$-rank-$1$ if and only if  $\sub_1$ and $\sub_2$ are rank-$1$ with respect to $\chi_1$ and $\chi_2$.
\end{enumerate}
\end{prop}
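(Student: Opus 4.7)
The plan is to reduce both assertions to standard properties of the Kronecker product. Under the natural bijection $\mathcal{A} = G\times\digits \cong \mathcal{A}_1\times\mathcal{A}_2$, the definition of $W$ together with property \RT{} gives, for $\vec{d}=(\vec{d}_1,\vec{d}_2)$ and $\vec{d}'=(\vec{d}'_1,\vec{d}'_2)$,
\[
\sub(h_1h_2\,\vec{d},\vec{d}') \;=\; \bigl(W_1(\vec{d}_1,\vec{d}'_1)h_1,\; W_2(\vec{d}_2,\vec{d}'_2)h_2\bigr)\,\vec{d}',
\]
so a direct count of letter occurrences shows the substitution matrix of $\sub$ factors as $M = M_1\otimes M_2$, where $M_i$ is the substitution matrix of $\sub_i$. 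Likewise, multiplicativity of $\chi = \chi_1\otimes\chi_2 \in \widehat{G_1\times G_2}$ gives, at the level of matrices indexed lexicographically by $\digits_1\otimes\digits_2$,
\[
\chi(W) \;=\; \chi_1(W_1)\otimes \chi_2(W_2).
\]

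For part (1), I would use $(M_1\otimes M_2)^n = M_1^n\otimes M_2^n$ together with the elementary observation that a Kronecker product of nonnegative matrices is entrywise strictly positive if and only if each factor is. If both $M_i$ are primitive, taking $n$ sufficiently large makes each $M_i^n$ strictly positive, hence $M^n$ strictly positive; conversely, strict positivity of any $M^n$ immediately forces it on both $M_i^n$.

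For part (2), the rank claim is immediate from $\mathrm{rank}(A\otimes B) = \mathrm{rank}(A)\cdot\mathrm{rank}(B)$: $\chi(W)$ is rank-$1$ iff both $\chi_i(W_i)$ are. For $\chi$-unitarity one computes
\[
\chi(W)\,\chi(W)^{*} \;=\; \bigl(\chi_1(W_1)\chi_1(W_1)^{*}\bigr)\otimes \bigl(\chi_2(W_2)\chi_2(W_2)^{*}\bigr),
\]
from which the forward direction is immediate. The converse is the main obstacle worth flagging: an identity of the form $A\otimes B = c\,I$ only forces both $A$ and $B$ to be scalar multiples of the identity, leaving the scalar splitting $c = c_1 c_2$ a priori indeterminate. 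The resolution is that characters take values on $S^{1}$, so every diagonal entry of $\chi_i(W_i)\chi_i(W_i)^{*}$ equals $\sum_{\vec{d}'}|\chi_i(W_i(\vec{d},\vec{d}'))|^{2} = |\digits_i|$. This pins down the scalars to be exactly $|\digits_i|$, yielding $\chi_i$-unitarity of each $\sub_i$, as required.
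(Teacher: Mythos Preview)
Your proof is correct and follows essentially the same line as the paper's: both arguments rest on the Kronecker factorizations $M_{\sub}=M_{\sub_1}\otimes M_{\sub_2}$ and $\chi(W)=\chi_1(W_1)\otimes\chi_2(W_2)$, together with standard Kronecker-product identities for positivity, unitarity, and rank. Your treatment is in fact more careful than the paper's on the converse of the unitarity claim---the paper simply asserts that Kronecker products of unitary matrices are unitary, whereas you correctly isolate and resolve the scalar-splitting issue using the fact that the diagonal entries of $\chi_i(W_i)\chi_i(W_i)^{*}$ are forced to equal $|\digits_i|$.
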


\begin{proof}
The first claim follows from the fact that the substitution matrix of $\mathcal{S}$ inherits the Kronecker product structure, i.e.,  $M_{\mathcal{S}}=M_{\mathcal{S}_1}\otimes M_{\mathcal{S}_2}$, hence if one chooses $n=\text{lcm}(n_1,n_2)$, where $n_i$ is the index of primitivity of $M_{\mathcal{S}_i}$, one gets that $M^n_{\mathcal{S}}=M^n_{\mathcal{S}_1}\otimes M^n_{\mathcal{S}_2}$ is a positive matrix. 
The second claim follows from the fact that Kronecker products of unitary matrices are unitary and that rank is multiplicative under $\otimes$. 
\end{proof}

\subsection{Singular subspaces via Lyapunov exponents}

\begin{example}
Let $G=C_4=\left\langle\ii\right\rangle $ and $\digits =\left\{0,1\right\}$. 
Consider the map 
\[
W=\begin{pmatrix}
1 & \ii\\
1 & -1
\end{pmatrix}.
\]
Let $\chi_n\colon g\mapsto g^n$ be a character in $\widehat{C_4}$. For $n=0$, we get the odometer factor.  For $n=2$, we get that $\frac{1}{\sqrt{2}}\chi_2(W)$ is unitary, so $H^{\chi_2}$ is of Lebesgue type. For $n=1,3$, we do not get smaller substitutive factors since $\text{ker}(\chi_1)=\text{ker}(\chi_3)=\left\{1\right\}$, but $\chi_1(W)$ and $\chi_3(W)$ are both of full rank, which means we can use Lyapunov exponents to prove the singularity of the spectrum by invoking Proposition~\ref{prop:Lyapunov-spin-singular}. The corresponding (complex-valued) matrix cocycles are
\[
B_{\chi^{ }_1}(\vec{k})=\begin{pmatrix}
1 & \ii\\
\ee^{2\pi\ii\vec{k}} & -\ee^{2\pi\ii\vec{k}}
\end{pmatrix} \quad\text{ and }\quad 
B_{\chi^{ }_3}(\vec{k})=\begin{pmatrix}
1 & -\ii\\
\ee^{2\pi\ii\vec{k}} & -\ee^{2\pi\ii\vec{k}}
\end{pmatrix}
\]
A sequence of almost sure upper bounds of the Lyapunov exponent is given by 
\begin{equation}\label{eq:Lyapunov-UB}
f(N)=\frac{1}{2N}\int_0^1 \log\|B^{(N)}_{\chi}(\vec{x})\|^2\dd \vec{x}  
\end{equation}
which can be computed numerically for each $N\in\mathbb{N}$. The expansion factor for this example is $|\det(Q)|=|\mathcal{D}|=2$. If, for some $N$, this upper bound in Eq.~\eqref{eq:Lyapunov-UB} is strictly less than $\log\sqrt{2}$, one gets that $\lambda^{B_{\chi}}(\vec{k})<\log\sqrt{2}$ for a.e. $\vec{k}\in\mathbb{R}$; compare \cite{BFGR,BGM}.

\begin{table}[h]
\begin{center}
\renewcommand{\arraystretch}{1.5}
\begin{tabular}{|c|c|c|c|c|}
\hline 
$N$ & 10 & 11 & 12 & 13    \\
\hline  
$\frac{1}{N}\int_{\mathbb{T}}\log\|B_{\chi^{ }_1}^{(N)}(\vec{x})\|^{2}_{\text{F}}$ &  0.703953 & 0.695342 & 0.688005 & 0.682035   \\
\hline 
\end{tabular}
\vspace{3mm}
\caption{Numerical values for upper bounds for $2f(N)$ for $B_{\chi^{ }_1}$. Here $\|\cdot\|^{ }_{\text{F}}$ stands for the Frobenius norm. All numerical errors are less that $10^{-3}$.}\label{tab:Lyapunov-C4}

\end{center}
\end{table}

Since $\log(2)\approx 0.693147$, we get that $\lambda^{B_{\chi}}(\vec{k})\leqslant f(12)<\log\sqrt{2}$ for a.e. $\vec{k}\in\R$, which implies $H^{\chi_1}$ is singular. One can carry out the same computation for $\chi_3$ which leads to the same result.

\end{example}

\section{Summary and Outlook}

We can summarize the spectral analysis we have for qubit spin substitutions as follows. 
Proposition~\ref{prop: L2 decomp} tells us that $L^{2}(X,\mu)$ splits into a direct sum of orthogonal subspaces $H^{\chi}$ each corresponding to a $\chi\in\Gdual$. To analyze the spectral properties of $H^{\chi} $ we ask: 
\begin{enumerate}
\item Is $\chi(W)$ of full rank? 
\begin{enumerate}
\item Is $\frac{1}{|\digits|}\chi(W)$ unitary? If yes, $H^{\chi}$ has $|\mathcal{D}|$ orthogonal absolutely continuous components.  (Theorem~\ref{thm:main result}(b))
\item If $\frac{1}{|\digits|}\chi(W)$ is of full rank but not unitary, use Lyapunov exponents to confirm singularity (Proposition~\ref{prop: LE full rank})
\end{enumerate}
\item Is $\chi(W)$ of rank 1? 
\begin{enumerate}
\item If yes, then $H^{\chi}$ is singular with at most $|G/\text{ker}(\chi)|$ orthogonal components. (Theorem~\ref{thm:main result}(c))
\item If $\sub^{\prime}$ is aperiodic, then $H^{\chi}$ must be purely singular continuous.
\end{enumerate}
\item Is $\chi(W)$ of rank between $1$ and $|\digits|$?
\begin{enumerate}
\item Check whether the factor substitution
$\sub^{(\chi)}$ has singular spectrum (e.g. via Lyapunov exponents). If it does, then $H^{\chi}$ must be singular. (Proposition~\ref{prop: factors})
\end{enumerate}

\end{enumerate}

\begin{question}\label{ques:ac}
Let $\mathcal{S}$ be a primitive qubit substitution arising from a digit set $\mathcal{D}$ and group $G$. Is it true that the following are equivalent?
\begin{enumerate}
\item There are exactly $n$ characters $\chi\in \Gdual$ for which $\mathcal{S}$ is $\chi$-unitary.
\item The substitution matrix $M$ has exactly $n|\digits|$ eigenvalues of modulus $\sqrt{|\digits|}$. 
\item The Fourier matrix $B(\vec{k})$ has Lyapunov exponent $\log\sqrt{|\digits|}$ of multiplicity $n|\digits|$. 
\item $L^{2}(\seqsp,\mu)$ has a Lebesgue component of multiplicity $n|\digits|$. 
\end{enumerate}
Equivalently, is there any other mechanism for $H^{\chi}$ to be absolutely continuous apart from $\chi$-unitarity? 
\end{question}

\begin{question}
All of the examples we considered have even Lebesgue multiplicity.
Is there a primitive qubit substitution $\mathcal{S}$ whose Lebesgue multiplicity is odd? This requires $|\mathcal{D}|$ to be odd if the equivalences in Question~\ref{ques:ac} hold. 
\end{question}

\section*{Acknowledgements}

The authors would like to thank Michael Baake for valuable suggestions and comments on the manuscript. We also express our gratitude to Franz G\"ahler, Uwe Grimm, Daniel Lenz, May Mei and Lorenzo Sadun for fruitful discussions. NM is funded by the German Research Foundation (DFG, Deutsche
Forschungsgemeinschaft), via SFB 1283/2 2021--317210226.

\bibliographystyle{alpha}

\end{document}